\definecolor{myorange}{RGB}{255, 176, 1}
\definecolor{mygreen}{RGB}{55, 184, 78}
\definecolor{mycyan}{RGB}{51, 255, 255}
\newcommand{\RR}{{\mathbb{R}}}
\newcommand{\der}{\partial}
\newcommand{\om}{\Omega}
\newcommand{\oms}{\Omega^{\sharp}}
\newcommand{\Gs}{G^{\sharp}}
\newcommand{\fs}{f^{\sharp}}
\newcommand{\gs}{g^{\sharp}}
\newcommand{\Kd}{\mathcal{K}^{\diamond}}
\newcommand{\dive}{\text{\normalfont div}}
\newtheorem{theorem}{Theorem}[section]
\newtheorem{lemma}[theorem]{Lemma}
\newtheorem{proposition}[theorem]{Proposition}
\newtheorem{remark}[theorem]{Remark}
\newtheorem{definition}[theorem]{Definition}
\numberwithin{equation}{section}
\date{}
\begin{document}
\title{ Lipschitz stable determination of polyhedral conductivity inclusions from local boundary measurements}
\author[1]{Aspri Andrea} 
\author[2]{Beretta Elena}
\author[3]{Francini Elisa}
\author[3]{Vessella Sergio}

\affil[1]{Department of Mathematics, Università degli Studi di Pavia}
\affil[2]{Department of Mathematics, NYU Abu Dhabi}
\affil[3]{Department of Mathematics and Computer Science, Università degli Studi di Firenze}

\maketitle

\begin{abstract}
We consider the problem of determining a polyhedral conductivity inclusion embedded in a homogeneous isotropic medium from boundary measurements. We prove global Lipschitz stability for the polyhedral inclusion from the local Dirichlet-to-Neumann map extending in a highly nontrivial way the results obtained in \cite{BerFra20} and  \cite{BerFraVes21} in the two-dimensional case to the three-dimensional setting.
\end{abstract}

\section{Introduction}
In this paper we analyze the nonlinear inverse problem of determining a polyhedron embedded in a three-dimensional homogeneous isotropic conducting body from boundary measurements.
More precisely,  we consider the conductivity equation 
\begin{equation}\label{conductivity}
             \textrm{ div }(\gamma_D\nabla u)  =  0\mbox{ in }\om\subset\mathbb{R}^3, \\
\end{equation}
where 
\[\gamma_D=1+(k-1)\chi_{D},\]  
{with $D$ a polyhedral inclusion} strictly contained in a bounded domain $\Omega$, and $k\neq 1$ is a given, positive constant.

This class of conductivity inclusions appears in applications, like for example in geophysics exploration, where the medium (Earth) under inspection contains heterogeneities in the form of rough bounded subregions (for example subsurface salt or limestone bodies) with different conductivity properties \cite{Zhdanov1994TheGM}.

We establish a Lipschitz stability estimate for the Hausdorff distance of polyhedral conductivity inclusions in terms of the local Dirichlet-to-Neumann (DtN) map, and, as a byproduct, a uniqueness result which is new in this general setting.
An analogous, though less general result  was obtained in \cite{BerdeHFrVes} in the case of the Helmholtz equation.
{We would like to point out that in principle it should be possible to recover in a Lipschitz stable way both the polyhedral inclusion and the constant conductivity from boundary data but in order to reduce the technical complexity of the proof we decided to treat the case where the conductivity is fixed.}

Lipschitz stability estimates are of key importance in {practical applications}. In fact, they provide a useful framework for optimization when using iterative methods, see for example \cite{deHoopetal,AS2021} so that the recovery of polyhedral interfaces becomes a shape optimization problem, see \cite{BerMicPerSan18,Shi} for the reconstruction of polygonal and polyhedral inclusions.

There is a wide literature on Lipschitz stability for the inverse conductivity problem when unknown coefficients depend on finitely many parameters and infinitely many measurements are available, see for example \cite{AleVes05}, \cite{BerFra11},\cite{AldeHG}, \cite{AldeHGS}, \cite{GS}, \cite{FGS} and \cite{BerFra20,BerFraVes21} while in the case of finitely many measurements we refer to \cite{ABV,BacV} and to the more recent work \cite{AS2021,Alberti2020,H1,H2}.

To our knowledge uniqueness and stability for general polyhedral conductivity inclusions from finitely many measurements {are} an open issue. 
Unique determination from one suitably chosen measurement has been proved in \cite{BFK} restricting to the class of convex polyhedra. Logarithmic stability from one measurement has been derived in \cite{LiuTsou} in the two-dimensional case for polygonal conductivity inclusions {and in \cite{LiuTsoYan21} some preliminary results are obtained for the determination of a class of smooth two-dimensional inclusions}.

Also, we would like to mention that the results obtained
recently in \cite{Alberti2020} in an abstract setting and where Lipschitz continuity from finitely many measurements has been proved if the unknown belongs to a suitable finite dimensional nonlinear manifold seem not to include the case of polygonal and polyhedral conductivity inclusions. 

On the other hand, in several
applications, like the geophysical one, many measurements are at disposal on some part of the boundary, justifying the use of the local Dirichlet-to-Neumann map \cite{BCFLM}.

We would like to emphasize that the result we obtain is not at all a straightforward extension of the two-dimensional results obtained previously in \cite{BerFra20} and \cite{BerFraVes21} since it requires to deal with the more complex three-dimensional geometric setting. In fact, our main result relies on some preliminary rather technical but crucial geometric properties on admissible polyhedra $D\in \mathfrak{D}$ satisfying minimal a priori assumptions of Lipschitz type. In particular, for two polyhedra in $\mathfrak{D}$ we are able to compare the Hausdorff distance of their boundaries and a modified distance defined in Section \ref{sec3}, Definition \ref{distmod}. These properties are then used to derive a first rough stability estimate of logarithmic type relating the Hausdorff distance between the boundaries of the polyhedra and the corresponding DtN maps.
The stability estimate is obtained along the lines proposed in \cite{ADC} and \cite{ADCMR}: computing the difference of the local DtN 
along a pair of singular solutions for the conductivity
operator with singularities $y, z \in  \mathbb{R}^3\setminus \Omega$  close to 
$\partial \Omega$ exploiting unique continuation and regularity properties of this function, denoted by $S(y, z)$, and finally coupling upper and lower bounds of $S(y, z)$.

Furthermore, as in \cite{BerFraVes21}, a crucial step to establish our Lipschitz stability is to prove smoothness of the local DtN map and to establish a lower bound of the directional derivative of the local DtN map. We construct an ad-hoc Lipschitz vector field, use a distributed representation formula of the derivative, derived in \cite{BerMicPerSan18}, and integrate by parts far from edges and vertices taking advantage of regularity properties of solutions to (\ref{conductivity}) close to smooth interfaces and avoiding the complex singular behaviour solutions to (\ref{conductivity})  exhibit close to vertices and edges.    
Finally, collecting the results of Sections \ref{sec4} and \ref{sec5} in Section \ref{sec6} we prove our main result.

It would be interesting to extend
the results of stability to the more general geometric configuration where the reference domain $\Omega$ is in the form of an inhomogeneous layered medium. 
This kind of geometrical setting originates from applications, for example, in {geophysical exploration}, where the medium under inspection (for example the Earth) is layered and contains heterogeneities in the form of rough bounded sub-regions  with different conductivity properties, \cite{F}.
Moreover, the theoretical results in this paper contain the building blocks towards successful numerical reconstruction procedures based on, {for example}, shape derivative and level set techniques, as in \cite{AlbLauStu20,FepAllBorCorDap19,Lau18,Lau20,LauMef16,LauStu16}.

The plan of the paper is the following: In Section \ref{sec2}, we list the main a priori assumptions on the reference medium, the admissible polyhedral inclusions $D\in\mathfrak{D}$, the conductivity parameter and the data and state our main result, Theorem \ref{mainteo}.
In Section \ref{sec3}, we collect and prove the main geometric properties on polyhedra belonging to the class $\mathfrak{D}$ that are crucial to derive our main stability result.
In Theorem \ref{stablog} of Section \ref{sec4}, we derive a first rough logarithmic stability estimate.  
In Section \ref{sec5}, we analyse the differentiability properties of the local DtN map, establish a formula for the directional derivative, prove its continuity and derive a lower bound (Proposition \ref{prop:lower_bound_F'}).   
Finally, in Section \ref{sec6}, collecting the results of Section \ref{sec4} and \ref{sec5}, we prove our main stability result (Theorem \ref{mainteo}). 
The appendix collects some technical proofs.

\section*{Notation}
We begin by setting notation that we will use throughout and recalling some of the needed definitions.\\
Given $P\in\mathbb{R}^3$, and $R>0$, we denote by $B_{R}(P)$ the ball of center $P$ and radius $R$, that is 
\begin{equation}\label{def:ball}
B_{R}(P):=\{x\in\mathbb{R}^3:\ |x-P|<R\},
\end{equation}
and by $B'_{R}(P)$ a disc centered at $P$ with radius $R$, contained in a specific plane, which will be specified each time. We omit $P$ when the center of the ball is in the origin.

We utilize standard notation for inner products, that is $x\cdot y=\sum_i x_i y_i$. 
Given $A$ and $B$ bounded sets in $\mathbb{R}^3$, we recall that 
\begin{equation}
    dist(x,A)=\inf\{|x-a|:\ a\in A\},\quad \textrm{and}\quad dist(A,B)=\inf\{|a-b|:\ a\in A,\ b\in B\},  
\end{equation}
and we define the Hausdorff distance between two bounded and closed sets $C$ and $D$ in $\mathbb{R}^3$ as
\begin{equation}\label{eq:hausdorff distance}
    d_H(C,D)=\max\left\{\max_{x\in C}dist(x,D),\max_{x\in D}dist(x,C)\right\}.
\end{equation}
With $Int(C)$ we denote the set of interior points of $C$.
Given two closed simply connected and bounded flat surfaces $F_1$ and $F_2$ contained in $\mathbb{R}^3$, and assuming that $F_1\cap F_2=:\sigma$, where $\sigma$ is a segment and such that $\sigma\neq \emptyset$, then we denote by  
$Int_{\RR^2}\left(F_1\right)$ and  $Int_{\RR}\left(\sigma\right)$ the interior of the set relative to the plane and the line that contain $F_1$ and $\sigma$, respectively.

\section{Assumptions and main result}\label{sec2}
Let us start setting up the definition of {a polyhedron}, the notation for faces and vertices of the polyhedron and the a-priori assumptions that are needed in order to derive our main result. 
\begin{definition}\label{def1}
A closed subset $D\subset \RR^3$ is a polyhedron if:
\begin{equation}
    D \mbox{ is homeomorphic to a ball in } \RR^3;
\end{equation}
the boundary $\der D$
is given by
\begin{equation}
\der D=\bigcup_{j=1}^H F^D_j
\end{equation}
where each $F_j^D$ is a closed simply connected plane polygon (that is called a face of $D$) and
\begin{equation}
    Int_{\RR^2}\left(F_i^D\right)\cap Int_{\RR^2}\left(F_j^D\right)=\emptyset \mbox{ for }i\neq j.
\end{equation}
For $i\neq j$, $\sigma^{D}_{ij}=F_i^D\cap F_j^D$ is called an edge of $D$ if $Int_{\RR}\left(\sigma^D_{ij}\right)\neq \emptyset$.
The non empty intersection of two edges is called a vertex $V^D$ of $D$.

\end{definition}

\subsection{Assumptions on the polyhedral inclusion and on the reference medium}\label{sec:assumptions}
We consider a class of non degenerate polyhedra: let 
\[ r_0,\quad R_0,\quad \theta_0,\quad M_0\]
be given positive numbers such that $\theta_0\in(0,\pi/2)$ and $r_0< R_0$. 

Let  $\om\subset\RR^3$ be a bounded domain such that
\begin{equation}\label{diam}
    diam\left(\om\right)\leq R_0,
\end{equation}
where $diam(\om)$ denotes the diameter of $\Omega$.
%

We say that a polyhedron $D\subset\Omega$ is in $\mathfrak{D}=\mathfrak{D}(r_0, R_0,\theta_0,M_0)$ if the following assumptions hold.
\begin{description}
\item[Strict Inclusion:] 
\begin{equation}\label{distfron}
    dist(D,\der\om)\geq r_0.
\end{equation}
\item[Dihedral angle non-degeneracy:] at each edge of $D$ the angle between the intersecting faces has width $\alpha$ such that 
\begin{equation}\label{angolifacce}
    \alpha\in(\theta_0,\pi-\theta_0)\cup(\pi+\theta_0,2\pi-\theta_0).
\end{equation}
\item[Face non-degeneracy:] for any polygonal face $F^D$ there exists $x_0\in F^D$ such that
\begin{equation}\label{discofacce}
    B^\prime_{r_0}(x_0)\subset F^D,
\end{equation}
where $B^\prime_{r_0}(x_0)$ is contained in the plane containing $F^D$.
\item[Edge non-degeneracy:] for each edge $\sigma_{ij}^D$ of $D$
\begin{equation}\label{lunghlati}
    length\left(\sigma_{ij}^D\right)\geq r_0.
\end{equation}
\item[Face angle non-degeneracy:] each internal angle $\beta$ of each face $F^D$ satisfies
\begin{equation}\label{angolinterni}
    \beta\in(\theta_0,\pi-\theta_0)\cup(\pi+\theta_0,2\pi-\theta_0).
\end{equation}
\item[Lipschitz regularity]
\begin{equation}\label{lip}
 \om\setminus D   \mbox{ is connected and has Lipschitz boundary with constants }r_0\mbox{ and }M_0, 
\end{equation}that is: for every $P\in\der(\om\backslash D)$ there is a rigid transformation of coordinates under which $P\equiv0$ and 
\begin{equation*}
    \left(\om\setminus D\right)\cap R_{M_0,r_0}=\left\{(x_1,x_2,x_3)\,:\,\Psi(x_1,x_2)<x_3\right\}
\end{equation*}
where 
\begin{equation*}
    R_{M_0,r_0}=[-r_0,r_0]^2\times [-2M_0r_0,2M_0r_0]
\end{equation*}
and $\Psi:[-r_0,r_0]^2\to \RR$ is such that
$\Psi(0,0)=0$ and
\begin{equation*}
    \left|\Psi(x_1,x_2)-\Psi(x^\prime_1,x^\prime_2)\right|\leq M_0\sqrt{(x_1-x^\prime_1)^2+(x_2-x^\prime_2)^2},
\end{equation*}
for every $x_1$, $x_2$, $x_1^\prime$, $x_2^\prime\in[-r_0,r_0]$.
\end{description}
\begin{remark}\label{rem1}
The number of vertices $V^D$, edges $\sigma_{ij}^D$ and faces $F_j^D$ of a polyhedron in $\mathfrak{D}$ is bounded from above by a constant $N_0$ depending only on $r_0, R_0$, and $M_0$.
\end{remark}
\begin{remark}\label{rem2}
Recall that \eqref{lip} is not implied by the previous assumptions. Figure \ref{nonlip} shows a polyhedron satisfying \eqref{angolifacce} -- \eqref{angolinterni} but not \eqref{lip} at $P$.
{\begin{remark}
Some of the previous assumptions are technical and instrumental to derive some of the proofs. It might be possible, in principle, that using other techniques these assumptions can be relaxed.  
\end{remark}
}
\begin{figure}[!h]
	\centering
	\includegraphics[scale=0.5]{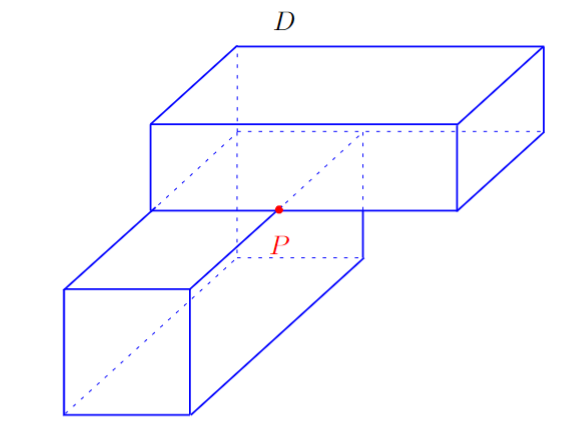}
	\caption{An example of a polyhedron satisfying \eqref{angolifacce} -- \eqref{angolinterni} but not \eqref{lip} at $P$. {We refer the reader to \cite[Example 3.7]{ErnGuer21} for a detailed explanation, by using the uniform cone property, of the fact that $D$ is not a Lipschitz domain.}}
	\label{nonlip}
\end{figure}
\end{remark}

Let 
\begin{equation}\label{eq:conductivity_coeff}
\gamma_D:=1+(k-1)\chi_D,    
\end{equation}
where $\chi_D$ is the characteristic function of $D\in\mathfrak{D}$, and $k$ is a positive constant such that 
{
\begin{equation}\label{contrast}
   \min(k,|k-1|)\geq \kappa_0.
\end{equation}
}
Finally let us state the assumptions on the part of the boundary on which we measure our data.
Let $\Sigma$ be an open portion of  $\partial\om$ with size at least $r_0$, i.e.  we assume there exists at least one point $P_{\Sigma}\in \Sigma$ such that
\begin{equation}\label{asssigma}
    dist(P_{\Sigma},\partial\om \setminus\Sigma)\geq r_0.
\end{equation}

In the sequel, we will refer to the set of parameters
\[ r_0,\quad R_0,\quad \theta_0,\quad M_0,\quad \kappa_0\]
as the {\it a priori data}.

\subsection{The local Dirichlet to Neumann map}
We define  
\begin{equation*}
    H^{\frac{1}{2}}_{co}(\Sigma):=\Bigg\{\varphi\in H^{\frac{1}{2}}(\partial\om)\, :\, \textrm{supp}\ \varphi\subset \Sigma \Bigg\},
\end{equation*}
and with $H^{-\frac{1}{2}}_{co}(\Sigma)$ its topological dual. \\
 Given $f\in H^{\frac{1}{2}}_{co}(\Sigma)$, we consider the boundary value problem
\begin{equation}\label{eq:main_prob}
    \begin{cases}
    \dive(\gamma_D\nabla u)=0 & \textrm{in}\ \om \\
    u=f & \textrm{on}\ \partial\om.
    \end{cases}
\end{equation}
Let us denote by $\Lambda_{\gamma_D}^{\Sigma}$ the local DtN map, that is the map 
\begin{equation}\label{eq:DtN map}
\begin{aligned}
    \Lambda_{\gamma_D}^{\Sigma}: H^{\frac{1}{2}}_{co}(\Sigma) &\to H^{-\frac{1}{2}}_{co}(\Sigma)\\
    f & \to \frac{\partial u}{\partial \nu}\bigg\lfloor_{\Sigma}
\end{aligned}    
\end{equation}
where $u\in H^1(\om)$ is the solution to \eqref{eq:main_prob}, and $\nu$ is the outer unit normal vector to $\partial\Omega$. 
The norm of the local DtN map in the space of linear operators $\mathcal{L}\left(H^{\frac{1}{2}}_{co}(\Sigma),H^{-\frac{1}{2}}_{co}(\Sigma)\right)$ is defined by 
\begin{equation*}
    \|\Lambda_{\gamma_D}^{\Sigma}\|_{\star}:=\textrm{sup}\bigg\{ \|\Lambda_{\gamma_D}^{\Sigma}\varphi\|_{H^{-\frac{1}{2}}_{co}(\Sigma)}/\|\varphi\|_{H^{\frac{1}{2}}_{co}(\Sigma)}\, :\, \varphi\neq 0 \bigg\}.
\end{equation*}
As in \cite{AleKim12} the DtN map can be defined as the operator characterized by 
\begin{equation*}
    \big\langle \Lambda_{\gamma_D}^{\Sigma}f, \phi \big\rangle =\int_{\om}\gamma_D \nabla u\cdot \nabla v\, dx,
\end{equation*}
for all $\phi,f\in H^{\frac{1}{2}}_{co}(\Sigma)$, where $u\in H^1(\om)$ is the solution to \eqref{eq:main_prob}, and $v$ is any $H^1(\om)$-function such that $v\lfloor_{\Sigma}=\phi$.\\
\subsection{The main result}
Recalling the definition of the Hausdorff distance, see \eqref{eq:hausdorff distance}, we state here our main Lipschitz stability result: 
\begin{theorem}\label{mainteo} Let $\om$ be a bounded domain with Lipschitz boundary satisfying \eqref{diam},  let $D_0$ and $D_1\in \mathfrak{D}$ (that is they satisfy assumptions \eqref{distfron}-\eqref{lip}), let $k$ satisfy  \eqref{contrast} and let $\Sigma$ be an open portion of $\partial\om$ satisfying \eqref{asssigma}. 
Then,  
there exists  $C$ depending only on the a priori data such that
\begin{equation}\label{eq:mainteo}
    d_{H}\left(\der D_0,\der D_1\right)\leq C
\left\|\Lambda_{\gamma_{D_0}}^{\Sigma}-\Lambda_{\gamma_{D_1}}^{\Sigma}\right\|_*,
\end{equation}
where
\[\gamma_{D_i}=1+(k-1)\chi_{{D_i}}\quad\textrm{for}\quad i=0,1.\]
\end{theorem}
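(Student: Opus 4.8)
The plan is to combine the two pillars announced in the introduction: the rough logarithmic stability estimate of Section~\ref{sec4} (Theorem~\ref{stablog}), which controls $d_H(\partial D_0,\partial D_1)$ — or an equivalent modified distance — by $\omega\big(\|\Lambda_{\gamma_{D_0}}^{\Sigma}-\Lambda_{\gamma_{D_1}}^{\Sigma}\|_*\big)$ with $\omega$ a logarithmic modulus of continuity, together with the analytic-smoothness and lower-bound results of Section~\ref{sec5}, in particular Proposition~\ref{prop:lower_bound_F'}, which says that the directional derivative of the local DtN map, along the ad-hoc Lipschitz vector field deforming one polyhedron towards the other, is bounded below by a constant times the size of the deformation. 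The classical scheme (as in \cite{BerFra20,BerFraVes21}) then runs as follows.

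First I would fix $D_0,D_1\in\mathfrak{D}$ and reduce to the regime where $d:=d_H(\partial D_0,\partial D_1)$ is small: if $d$ is bounded below by a constant depending only on the a priori data, then since the DtN maps of two distinct admissible inclusions differ (this uses the uniqueness part, itself a consequence of the logarithmic estimate), $\|\Lambda_{\gamma_{D_0}}^{\Sigma}-\Lambda_{\gamma_{D_1}}^{\Sigma}\|_*$ is bounded below by a positive constant, and \eqref{eq:mainteo} holds trivially with a large $C$. So assume $d<d^*$. Next, connect $D_0$ to $D_1$ by a one-parameter family $D_t$, $t\in[0,1]$, of admissible polyhedra obtained by flowing along the Lipschitz vector field constructed in Section~\ref{sec5}; by the smoothness of $t\mapsto \Lambda_{\gamma_{D_t}}^{\Sigma}$ established there one writes
\begin{equation*}
\Lambda_{\gamma_{D_1}}^{\Sigma}-\Lambda_{\gamma_{D_0}}^{\Sigma}=\int_0^1 \frac{d}{dt}\Lambda_{\gamma_{D_t}}^{\Sigma}\,dt,
\end{equation*}
and, testing against a suitably chosen pair of boundary data and using the lower bound of Proposition~\ref{prop:lower_bound_F'} uniformly in $t$, one gets $\|\Lambda_{\gamma_{D_0}}^{\Sigma}-\Lambda_{\gamma_{D_1}}^{\Sigma}\|_*\geq c\,\tilde d(D_0,D_1)$, where $\tilde d$ is the modified distance of Definition~\ref{distmod}. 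Finally, invoking the geometric comparison results of Section~\ref{sec3} relating $\tilde d(D_0,D_1)$, $d_H(D_0,D_1)$ and $d_H(\partial D_0,\partial D_1)$ (all comparable up to constants depending only on the a priori data for polyhedra in $\mathfrak{D}$), one converts the lower bound on $\tilde d$ into the desired lower bound on $d_H(\partial D_0,\partial D_1)$, yielding \eqref{eq:mainteo}.

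The delicate point — and the reason the logarithmic estimate of Section~\ref{sec4} is needed as an input rather than being the final answer — is that the lower bound on the derivative in Proposition~\ref{prop:lower_bound_F'} holds, with constants uniform in $t$, only when all the intermediate $D_t$ stay in a \emph{fixed} subclass of $\mathfrak{D}$ with slightly worse a priori constants; keeping the whole path admissible requires that $D_0$ and $D_1$ already be close in the Hausdorff sense, which is exactly what Theorem~\ref{stablog} guarantees once $\|\Lambda_{\gamma_{D_0}}^{\Sigma}-\Lambda_{\gamma_{D_1}}^{\Sigma}\|_*$ is small. So the argument is a bootstrap: the logarithmic estimate first forces $d_H(\partial D_0,\partial D_1)\le d^*$, which legitimizes the deformation family and the uniform derivative lower bound, which in turn upgrades logarithmic to Lipschitz. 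The main obstacle I anticipate is the book-keeping in this bootstrap — verifying that the threshold $d^*$ produced by the rough estimate is compatible with the admissibility window in which Proposition~\ref{prop:lower_bound_F'} applies, and that the vector field can be chosen so the flow $D_t$ genuinely remains in $\mathfrak{D}$ (no collapse of faces, edges or dihedral angles, no loss of the Lipschitz cone condition) along the entire path — together with the three-dimensional geometry needed to make the comparison $\tilde d \asymp d_H(\partial D_0,\partial D_1)$ quantitative, which is precisely the technical heart of Section~\ref{sec3}.
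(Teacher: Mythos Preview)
Your overall architecture is right --- use Theorem~\ref{stablog} to get into the small-$d_H$ regime, then use the differentiability results of Section~\ref{sec5} to upgrade to Lipschitz, with the large-$d_H$ case handled trivially. But the central analytic step is not quite what you describe, and as written there is a gap.

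You propose to write $\Lambda_{\gamma_{D_1}}^{\Sigma}-\Lambda_{\gamma_{D_0}}^{\Sigma}=\int_0^1 \tfrac{d}{dt}\Lambda_{\gamma_{D_t}}^{\Sigma}\,dt$ and then invoke Proposition~\ref{prop:lower_bound_F'} \emph{uniformly in $t$}. Two problems. First, Proposition~\ref{prop:lower_bound_F'} is a lower bound on $\|F'(0)\|_*$, i.e.\ at the single base point $t=0$; there is no uniform-in-$t$ statement available. Second, even if you had $\|F'(t)\|_*\ge m_0 d_H$ for every $t$, this does \emph{not} yield a lower bound on $\|\int_0^1 F'(t)\,dt\|_*$: the operator norm is attained at $t$-dependent test pairs $(f_t,g_t)$, and for a fixed pair the integrand could change sign and cancel. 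So the step ``testing against a suitably chosen pair \dots\ uniformly in $t$'' does not go through as stated.

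The paper closes this gap differently, and the missing ingredient in your sketch is Proposition~\ref{prop7.4}. One writes
\[
F(1,f,g)-F(0,f,g)=F'(0,f,g)+\int_0^1\big(F'(t,f,g)-F'(0,f,g)\big)\,dt,
\]
and Proposition~\ref{prop7.4} gives $|F'(t,f,g)-F'(0,f,g)|\le C\,t^{\beta_3}d_H^{1+\beta_3}\|f\|\|g\|$, so the remainder integral is $O(d_H^{1+\beta_3})$. Then Proposition~\ref{prop:lower_bound_F'} furnishes a \emph{single} pair $(f_0,g_0)$ with $|F'(0,f_0,g_0)|\ge \tfrac{m_0}{2}d_H\|f_0\|\|g_0\|$, and one obtains $\varepsilon\ge(\tfrac{m_0}{2}-Cd_H^{\beta_3})d_H$. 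A second application of Theorem~\ref{stablog} (choosing $\varepsilon_1$ so that $d_H^{\beta_3}$ is small enough) makes the bracket $\ge m_0/4$. This is linearization at $t=0$ with a higher-order remainder, not a uniform derivative bound along the path.

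One smaller point: the modified distance $d_\mu$ plays no role here. Proposition~\ref{prop:lower_bound_F'} already delivers the lower bound directly in terms of $d_H(\partial D_0,\partial D_1)$, so the final conversion via Section~\ref{sec3} you mention is unnecessary; $d_\mu$ is used only inside the proof of the logarithmic estimate.
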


The proof of Theorem \ref{mainteo} is postponed at Section \ref{sec6}, after proving some intermediate results based essentially on the following steps and strategy:

\begin{enumerate}
   \item  Prove that it always exists a suitable tubular neighborhood connecting any point on $\partial\Omega$ to a special interior point of the face of one of the two polyhedra $D_0$ or $D_1$, without crossing $D_0\cup D_1$. To this aim, we introduce a specific distance (called ``modified distance'') (see Definition \ref{distmod}) and exploit 
    the connection between the modified and the Hausdorff distances (see Proposition \ref{prop2}).
    \item The results of the previous point allow us to establish a rough (logarithmic) stability estimate of the Hausdorff distance between $D_0$ and $D_1$ in terms of the difference between the corresponding DtN map, see Theorem \ref{stablog}. This is obtained by propagating the smallness of data from $\Sigma$ along the tubular neighborhood.
    \item The logarithmic stability estimate implies that if two DtN maps are close enough, the two polyhedra have the same number of vertices, faces and edges, see Proposition \ref{distvert}. When this happens, it is possible to define a regular  vector field that transforms $D_0$ into $D_1$.  
    We also prove smoothness of the local DtN map and establish a lower bound of its derivative with respect to the movement of the polyhedron
    \item The regularity of the DtN map and the lower bound allow us to improve the stability estimates and to get Theorem \ref{mainteo}.
\end{enumerate}

\section{Some useful geometric results on polyhedra}\label{sec3}
In this section we collect some geometric results on polyhedra in the class $\mathfrak{D}$. We first establish the relation between the Hausdorff distance of two polyhedra in $\mathfrak{D}$ and the Hausdorff distance of their boundaries, see Proposition \ref{prop1}. Afterwards, we consider a modified distance between two polyhedra (see Definition \ref{distmod}) that was introduced in \cite{AS,ADCMR} and establish an upper bound of the Hausdorff distance of the boundaries of two polyhedra in terms of their modified distance, see Proposition \ref{prop2}. This last
property together with the main result of this section, that is Proposition \ref{lemmageometrico}, will be crucial in Section \ref{sec4} to establish our first logarithmic stability estimate.

Proposition \ref{lemmageometrico} here corresponds to Lemma 4.2 in \cite{ADCMR} where it is stated under the assumption of inclusions with $C^{1,\alpha}$ boundaries; this regularity assumption allows to show that the union of two such inclusions has Lipschitz boundary. Unfortunately, this is not the case for polyhedra in $\mathfrak{D}$. For this reason, in order to prove Proposition \ref{lemmageometrico}, we have to rely on a fine result from \cite{R08} stating that if two polyhedra in $\mathfrak{D}$ are close enough, in some neighborhood of some special point in the interior of one of the faces, the boundaries of the two polyhedra are relative graphs of affine functions (see Proposition \ref{prop4}).

The last key geometric result, contained in Proposition \ref{distvert}, states that if two polyhedra in $\mathfrak{D}$ are close enough, then they have the same number of vertices, edges and faces.

\subsection{Metric results}
In this subsection we use some results from \cite{R08}.
For this, we observe that our class of polyhedra $\mathfrak{D}$ is a subset of the class of polyhedra $\mathcal{A}_{p,0}(h)$ (defined in \cite{R08}) for some $h>0$ depending only on the a priori data. 

Let us set some useful notation.
 Given $P\in\RR^3$, a direction $\nu\in\RR^3$, $l>0$ and $\vartheta\in (0,\pi/2)$, we denote by
\begin{equation}\label{cone}
    \mathcal{C}(P,\nu,l,\vartheta)=\left\{x\in\RR^3\,:\,(x-P)\cdot\nu\geq|x-\overline{x}|\cos \vartheta,\, |x-P|\leq l\right\}
\end{equation}
the closed cone with vertex $P$, axis  $\nu$, width $\vartheta$, and apothem $l$.

\begin{remark}\label{remcono}
By assumption \eqref{lip}, for each $P\in \der\left(\om\setminus D\right)$ there exist a direction $\nu$, a positive $l$ and $\vartheta\in(0,\pi/2)$ depending only on the a priori data, such that 
\begin{equation*}
    \mathcal{C}(P,\nu,l,\vartheta)\subset\overline{(\om\setminus D)}
\end{equation*}
and, if $P\in\der D$
\begin{equation*}
    \mathcal{C}(P,-\nu,l,\vartheta)\subset D.
\end{equation*}
\end{remark}

The proposition below (that corresponds to Proposition 2.4 in \cite{R08} to which we refer for the proof) establishes the equivalence in $\mathfrak{D}$ between $d_H(D_0,D_1)$ and $d_H(\der D_0,\der D_1)$.
\begin{proposition}\label{prop1}
Let $D_0$ and $D_1\in \mathfrak{D}$, then there is a positive constant $C_1>1 $ depending on the a priori data only such that 
\begin{equation}\label{12}
    C_1^{-1}d_H(\der D_0,\der D_1)\leq d_H(D_0,D_1)\leq 
    C_1 d_H(\der D_0,\der D_1).
\end{equation}
\end{proposition}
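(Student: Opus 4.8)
\textbf{Proof proposal for Proposition \ref{prop1}.}

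The plan is to establish the two inequalities in \eqref{12} separately, exploiting the uniform cone condition from Remark \ref{remcono}, which holds with parameters $\nu$, $l$, $\vartheta$ depending only on the a priori data. The key geometric observation is that for sets satisfying such a two-sided cone condition, being close in Hausdorff distance as \emph{solid} sets and being close in Hausdorff distance as \emph{boundaries} are quantitatively equivalent, because near any boundary point the set occupies a definite solid angle (so the boundary cannot ``hide'' far inside the set) and the complement also occupies a definite solid angle (so the set cannot extend far past where its boundary is).

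First I would prove the easier direction, $d_H(\der D_0,\der D_1)\leq C_1\, d_H(D_0,D_1)$. Set $\epsilon:=d_H(D_0,D_1)$. Take any point $x\in\der D_0$; I must find a point of $\der D_1$ within distance $C_1\epsilon$. Since $x\in D_0$, there is $y\in D_1$ with $|x-y|\leq\epsilon$. If $x$ itself is within $C_1\epsilon$ of $\der D_1$ we are done, so suppose $\mathrm{dist}(x,\der D_1)>C_1\epsilon$; then the ball $B_{C_1\epsilon}(x)$ lies entirely inside $D_1$ or entirely inside $\RR^3\setminus D_1$. Using the cone $\mathcal{C}(x,-\nu,l,\vartheta)\subset D_0$ at $x$ together with $d_H(D_0,D_1)\le\epsilon$, a short argument (comparing a small cone of height $\sim C_1\epsilon$ sitting at $x$ against the $\epsilon$-neighborhood of $D_1$) shows $B_{C_1\epsilon}(x)$ meets $D_1$; symmetrically, using $\mathcal{C}(x,\nu,l,\vartheta)\subset\overline{\om\setminus D_0}$ together with $\mathrm{dist}(x,D_1)$ controlled via the other half of the Hausdorff distance, it meets $\RR^3\setminus D_1$. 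Hence $B_{C_1\epsilon}(x)$ contains points on both sides of $\der D_1$, forcing a point of $\der D_1$ inside it, a contradiction, provided $C_1$ is chosen large relative to $1/\sin\vartheta$ and $1/l$. (For $\epsilon$ not small, i.e.\ $\epsilon\gtrsim r_0$, the inequality is trivial after enlarging $C_1$ using $\mathrm{diam}(\om)\le R_0$, so one only needs the argument in the small-$\epsilon$ regime.) The same reasoning with the roles of $D_0$ and $D_1$ swapped handles points of $\der D_1$, giving this direction.

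Next the reverse direction, $d_H(D_0,D_1)\leq C_1\, d_H(\der D_0,\der D_1)$. Set $\eta:=d_H(\der D_0,\der D_1)$ and take $x\in D_0$; I want a point of $D_1$ within $C_1\eta$. If $\mathrm{dist}(x,\der D_0)\le\eta$, then there is a boundary point of $D_0$ near $x$, hence (by definition of $\eta$) a point of $\der D_1\subset D_1$ within $2\eta$ of $x$, and we are done. Otherwise $x$ is ``deep'' inside $D_0$, i.e.\ $B_{\eta}(x)\subset\mathrm{Int}(D_0)$; I claim then $x\in D_1$. If not, $x\in\RR^3\setminus D_1$; since $\RR^3\setminus D_1$ is connected and unbounded while $\mathrm{Int}(D_0)$ is bounded, the segment (or a path) from $x$ to infinity inside $\RR^3\setminus D_1$ must exit $D_0$, so it crosses $\der D_0$ at some point $x'$; but $x'\in\der D_0$ forces $\mathrm{dist}(x',\der D_1)\le\eta$, and combining with the two-sided cone condition at the relevant nearby point of $\der D_1$ one deduces $x'$ is within $C_1\eta$ of $D_1$ — tracking this back along the path and using that $x$ was at distance $>\eta$ from $\der D_0$ yields a contradiction for $C_1$ large. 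Hence $x\in D_1$, so $\mathrm{dist}(x,D_1)=0$. Swapping roles finishes the proof, and one takes $C_1$ to be the maximum of the constants produced in the two parts (and $>1$).

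The main obstacle is the second direction, and specifically the step where one must rule out that a point deep inside $D_0$ lies outside $D_1$: this is exactly where the topological hypotheses ($D_i$ homeomorphic to a ball, $\RR^3\setminus D_i$ connected) and the uniform interior/exterior cone condition interact, since without a quantitative ``thickness'' estimate on $\der D_1$ one could not convert boundary closeness into solid closeness. Since this is precisely Proposition 2.4 of \cite{R08}, I would in the write-up simply cite that reference for the detailed verification while recording the above as the conceptual skeleton; the only thing to check on our side is that $\mathfrak{D}\subset\mathcal{A}_{p,0}(h)$ for a suitable $h$ depending only on the a priori data, which is immediate from assumption \eqref{lip} and Remark \ref{remcono}.
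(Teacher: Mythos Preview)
Your proposal and the paper take the same approach: the paper gives no proof at all for this proposition and simply refers to Proposition~2.4 in \cite{R08}, noting (as you do) that $\mathfrak{D}$ sits inside the class $\mathcal{A}_{p,0}(h)$ treated there. Your added conceptual sketch is more than the paper provides; the first direction is fine, while the contradiction step in the second direction (``tracking this back along the path\ldots yields a contradiction'') is left vague, but since you explicitly defer to \cite{R08} for the details this is not a gap in your write-up.
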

For $D_0$ and $D_1\in\mathfrak{D}$, let $\mathcal{G}$ be the connected component of $\Omega\setminus \left(D_0\cup D_1\right)$ which contains $\partial\Omega$, and let 

\begin{equation}\label{eq:omD}
    \om_{\mathcal{G}}=\Omega\setminus \mathcal{G}.
\end{equation}
Since the value of $d_H(\der D_0,\der D_1)$ can be attained at some point of $\der D_0\cup\der D_1$ that is not necessarily on $\der \Omega_{\mathcal{G}}$ (see, for example, the configuration in Figure \ref{fig:setting}) and, hence, cannot be reached from $\der \om$ without crossing $\der D_0\cup\der D_1$, 
we introduce a modified distance as was defined in \cite{ADCMR}.
\begin{definition}\label{distmod}
\begin{equation}
    d_\mu(D_0,D_1)=\max\left\{\max_{x\in\der D_0\cap \der\Omega_{\mathcal{G}}}dist(x,D_1),\max_{x\in\der D_1\cap \der\Omega_{\mathcal{G}}}dist(x,D_0)\right\}.
\end{equation}
\end{definition}
\begin{figure}[!h]
     \centering
     \begin{subfigure}[b]{0.2\textwidth}
         \centering
         \includegraphics[width=\textwidth]{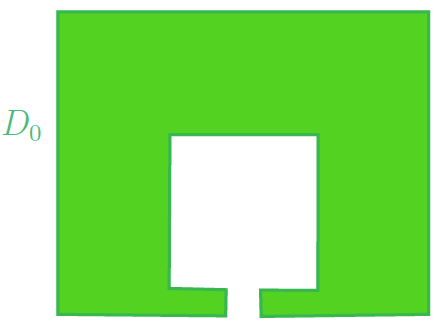}
         \caption{Polyhedron $D_0$ in green.}
         \label{fig:D0}
     \end{subfigure}
    \hspace{3cm}
     \begin{subfigure}[b]{0.2\textwidth}
         \centering
         \includegraphics[width=\textwidth]{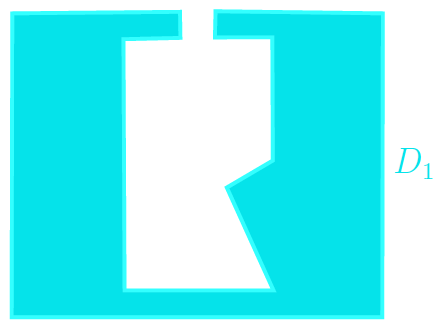}
         \caption{Polyhedron $D_1$ in pale blue.}
         \label{fig:D1}
     \end{subfigure}
      \ \\ \ \\
     \begin{subfigure}[b]{0.3\textwidth}
         \centering
         \includegraphics[width=\textwidth]{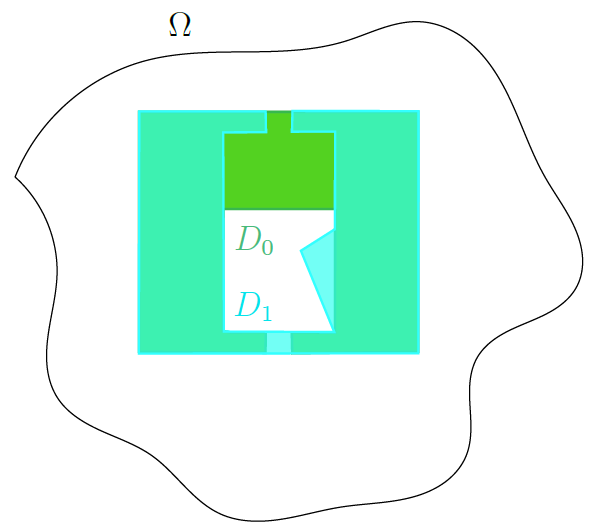}
         \caption{A possible configuration. The two polyhedra are overlapping.}
         \label{fig:configuration}
     \end{subfigure}
     \hspace{2cm}
     \begin{subfigure}[b]{0.25\textwidth}
         \centering
         \includegraphics[width=\textwidth]{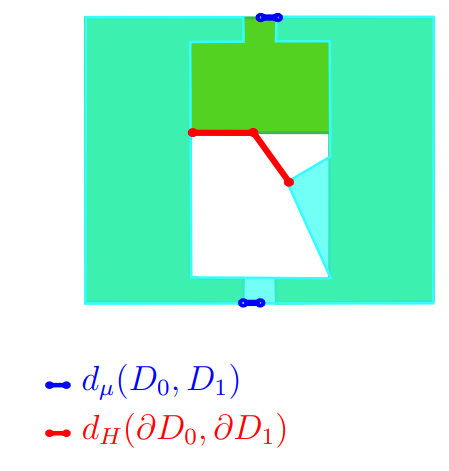}
         \caption{The two distances. For $d_H$ we provide all the points where the longest distance between the two sets occurs.}
         \label{fig:distances}
     \end{subfigure}
        \caption{A 2D-section of a possible geometrical setting. Note that Figure \ref{fig:distances} represents the case when the value of $d_H(\partial D_0,\partial D_1)$ is attained at some point that is not on $\partial\Omega_{\mathcal{G}}$.}
        \label{fig:setting}
\end{figure}
We point out that this is not a metric because, in general, the triangle inequality doesn't hold. It is straightforward to show, see \cite{ADCMR}, that
\begin{equation}\label{14}
    d_\mu(D_0,D_1)\leq d_H(\der D_0,\der D_1).
\end{equation}
In general, $d_{\mu}$  does not bound from above the Hausdorff measure, but, in the class $\mathfrak{D}$ the following result that will be crucial for deriving the stability estimates in Section \ref{sec4}, holds:
\begin{proposition}\label{prop2}
There is a constant $C_2>1$ depending only on the a priori data, such that, for $D_0$, $D_1\in \mathfrak{D}$
\begin{equation*}
    d_H(\der D_0,\der D_1)\leq C_2 d_\mu(D_0,D_1).
\end{equation*}
\end{proposition}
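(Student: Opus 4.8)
\textbf{Proof proposal for Proposition \ref{prop2}.} The plan is to argue by contradiction, combining the local graph description near a ``good'' interior point of a face with the dihedral-angle and cone conditions. Suppose the statement fails: then for every $n$ there are polyhedra $D_0^{(n)}, D_1^{(n)}\in\mathfrak D$ with $d_H(\der D_0^{(n)},\der D_1^{(n)}) > n\, d_\mu(D_0^{(n)},D_1^{(n)})$, so that setting $\eta_n := d_\mu(D_0^{(n)},D_1^{(n)})$ and $d_n := d_H(\der D_0^{(n)},\der D_1^{(n)})$ we have $\eta_n/d_n\to 0$. By \eqref{14} the quantities $d_\mu$ and $d_H(\der D_0,\der D_1)$ are both controlled by $R_0$, so by a compactness argument for the class $\mathfrak D$ (the number of faces is bounded by $N_0$ by Remark \ref{rem1}, and each face is a non-degenerate polygon with controlled geometry, so up to subsequences and rigid motions the polyhedra converge in Hausdorff distance to limit polyhedra $D_0^\infty, D_1^\infty\in\mathfrak D$) one can pass to the limit. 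The point $x^*$ realizing $d_H(\der D_0,\der D_1)$ — say $x^*\in\der D_0^{(n)}$ with $dist(x^*,D_1^{(n)}) = d_n$ — survives in the limit, so in the limit configuration one would have $d_\mu = 0$ while $d_H(\der D_0^\infty,\der D_1^\infty)>0$. Thus it suffices to show: if $d_\mu(D_0,D_1)=0$ for $D_0,D_1\in\mathfrak D$, then $d_H(\der D_0,\der D_1)=0$, i.e. $D_0=D_1$; and moreover to make this quantitative one needs the explicit geometric lemma below rather than mere compactness. Since the proposition asserts a quantitative constant, I would in fact run the contradiction argument directly with the geometric input, tracking the constants; the compactness heuristic only indicates why it should be true.

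The geometric core is the following. Assume without loss of generality that $d_n = dist(x^*,D_1)$ with $x^* \in \der D_0$. Because $x^*\in\der(\Omega\setminus D_0)$, Remark \ref{remcono} gives a cone $\mathcal C(x^*,-\nu,l,\vartheta)\subset D_0$ with $l,\vartheta$ depending only on the a priori data. The key claim is that a definite fraction of this cone — more precisely, a ball $B_{c\, d_n}(q)$ around a point $q$ at distance $\sim d_n$ from $x^*$ along the cone axis, with $c$ depending only on the a priori data — lies in $D_0\setminus D_1$ and, crucially, touches $\der D_0\cap\der\Omega_{\mathcal G}$. Indeed, since $dist(x^*,D_1)=d_n$, the ball $B_{d_n}(x^*)$ is disjoint from $D_1$; intersecting with the cone, the smaller ball $B_{c d_n}(q)\subset D_0$ is disjoint from $D_1$. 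One then has to check that some point of $\der D_0$ inside this region lies on the boundary of the component $\mathcal G$ of $\Omega\setminus(D_0\cup D_1)$ containing $\der\Omega$: this uses that $\Omega\setminus D_0$ is connected (assumption \eqref{lip}) and that the region we have carved out connects to $\der\Omega$ through $\Omega\setminus(D_0\cup D_1)$ — one slides along $\der D_0$ staying inside $\mathcal G$. Hence there is a point $\tilde x\in\der D_0\cap\der\Omega_{\mathcal G}$ with $dist(\tilde x,D_1)\geq c'\, d_n$ for some $c'$ depending only on the a priori data, giving $d_\mu(D_0,D_1)\geq c'\, d_n$, i.e. $d_H(\der D_0,\der D_1)\leq (c')^{-1} d_\mu(D_0,D_1)$, which is the assertion with $C_2 = (c')^{-1}$.

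The step I expect to be the main obstacle is precisely the last one: showing that one can reach a point of $\der D_0$ that both sits at controlled distance from $D_1$ \emph{and} lies on $\der\Omega_{\mathcal G}$, with the distance lower bound degrading only by a fixed factor. The difficulty is that $D_0\cup D_1$ need not have Lipschitz boundary (Remark \ref{rem2}), so one cannot simply invoke a Lipschitz-neighborhood argument as in \cite{ADCMR}; this is exactly why the paper says Proposition \ref{lemmageometrico} requires the fine result of \cite{R08}. I would handle it by using the affine relative-graph structure near a distinguished interior face point — the content of Proposition \ref{prop4} — to describe $\der D_0$ and $\der D_1$ simultaneously as graphs over a common plane in a ball of radius $\sim r_0$, reducing the reachability question to a planar/one-dimensional statement about graphs of affine functions whose separation at $x^*$ is $d_n$; then a chaining argument across the boundedly many faces (Remark \ref{rem1}) propagates the estimate from the realizing point $x^*$ to a point on $\der\Omega_{\mathcal G}$, each step costing only a factor depending on $\theta_0$ and $M_0$. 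Keeping all constants uniform through this chaining is the technical heart of the proof.
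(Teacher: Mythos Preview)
Your argument has a genuine gap, and it stems from conflating $dist(x^*,\partial D_1)$ with $dist(x^*,D_1)$. The point $x^*\in\partial D_0$ realizing $d_H(\partial D_0,\partial D_1)$ satisfies $dist(x^*,\partial D_1)=d_n$, \emph{not} $dist(x^*,D_1)=d_n$; in the interesting case (precisely the one depicted in Figure~\ref{fig:distances}) $x^*$ lies in $Int(D_1)$, so $dist(x^*,D_1)=0$ and your claim that $B_{d_n}(x^*)$ is disjoint from $D_1$ is false. Consequently the cone-and-ball construction produces nothing in $D_0\setminus D_1$, and the step ``find $\tilde x\in\partial D_0\cap\partial\Omega_{\mathcal G}$ with $dist(\tilde x,D_1)\ge c'd_n$'' has no foundation. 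The subsequent plan to invoke Proposition~\ref{prop4} and chain across faces is also misplaced: Proposition~\ref{prop4} concerns a \emph{special} point where the boundaries are relative graphs, and is used later for Proposition~\ref{lemmageometrico}, not here.

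The paper's proof is both simpler and goes in the opposite direction. It does not try to exhibit a point of $\partial D_0\cap\partial\Omega_{\mathcal G}$ far from $D_1$; instead it bounds $dist(P,\partial D_1)$ for \emph{every} $P\in\partial D_0$ directly. The only tool is Lemma~\ref{lm3}: from any $P\in\partial D_0$ there is a curve $\mathfrak c\subset\Omega\setminus D_0$ to $\partial\Omega$ with the John-type property $|z-P|\le C_2\,dist(z,D_0)$ for all $z\in\mathfrak c$ (this uses only the Lipschitz assumption \eqref{lip} and the connectedness of the sets $E^D_t$). If $P\notin\partial\Omega_{\mathcal G}$ then $P\in Int(\Omega_{\mathcal G})$, so $\mathfrak c$ must cross $\partial\Omega_{\mathcal G}$; since $\mathfrak c$ avoids $\partial D_0\setminus\{P\}$, the crossing point $\bar z$ lies on $\partial D_1\cap\partial\Omega_{\mathcal G}$. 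By definition of $d_\mu$ one has $dist(\bar z,D_0)\le d_\mu$, and the curve property gives $|P-\bar z|\le C_2\,dist(\bar z,D_0)\le C_2\,d_\mu$. Since $\bar z\in\partial D_1$, this yields $dist(P,\partial D_1)\le C_2\,d_\mu$. No compactness, no Proposition~\ref{prop4}, no chaining is needed.
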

In order to prove Proposition \ref{prop2} we need the following preliminary result:
\begin{lemma}\label{lm3}
Let $D\in\mathfrak{D}$. Then, for every $P\in \der D$ there exists a curve $\mathfrak{c}$ in $\om\setminus D$ connecting $P$ to $\der\om$ such that 
\begin{equation*}
    |z-P|\leq C_2 \, dist(z,D),\quad {\forall z\in\mathfrak{c}},
\end{equation*}
where $C_2>1$ depends only on the a priori data.
\end{lemma}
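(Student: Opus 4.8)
The plan is to produce $\mathfrak{c}$ as the concatenation of two pieces: a straight segment leaving $P$ along the axis of an interior cone of $\om\setminus D$ at $P$, and a path joining the endpoint of that segment to $\der\om$ that stays at a uniformly positive distance from $D$.

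\textbf{Near $P$.} First I would apply Remark \ref{remcono}: at $P\in\der D$ there are a direction $\nu$ and constants $l,\vartheta\in(0,\pi/2)$, depending only on the a priori data, with $\mathcal{C}(P,\nu,l,\vartheta)\subset\overline{\om\setminus D}$; shrinking $l$ if necessary we may assume $l\le r_0$, so by \eqref{distfron} the cone lies in $\om$. An elementary computation shows that for $0<t\le l/2$ the ball $B_{\sigma t}(P+t\nu)$ with $\sigma=\tan^2(\vartheta/2)$ is contained in the cone, hence in the interior of $\overline{\om\setminus D}$ and therefore disjoint from $D$; thus along the segment $\mathfrak{c}_1=\{\,P+t\nu:0\le t\le l/2\,\}$ one has $|z-P|\le\sigma^{-1}\,dist(z,D)$, and its endpoint $Q:=P+\tfrac{l}{2}\nu$ satisfies $dist(Q,D)\ge\rho_0:=\sigma l/2$, a constant depending only on the a priori data.

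\textbf{Reaching $\der\om$.} The main step is to join $Q$ to $\der\om$ by a path $\mathfrak{c}_2$ in $\om\setminus D$ along which $dist(\cdot,D)\ge\ep$ for some $\ep>0$ depending only on the a priori data. Here I would exploit the robustness of the complement of a polyhedron in $\mathfrak{D}$: by the metric description of these polyhedra in \cite{R08} (recall $\mathfrak{D}\subset\mathcal{A}_{p,0}(h)$) together with the non-degeneracy assumptions \eqref{angolifacce}--\eqref{lip}, there is $\ep_*>0$, depending only on the a priori data, such that for every $\ep\le\ep_*$ the open set $W_\ep:=\om\setminus\{x:dist(x,D)\le\ep\}$ is connected and contains $\der\om$ in its closure; indeed the dilated polyhedron still lies in $\om$ by \eqref{distfron}, and the bounds on dihedral angles, face angles, edge lengths and the Lipschitz character of $\om\setminus D$ prevent $W_\ep$ from pinching off or detaching from $\der\om$ at a scale not controlled by the a priori data. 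Choosing $\ep:=\min\{\ep_*,\rho_0/2\}$ we get $Q\in W_\ep$ since $dist(Q,D)\ge\rho_0>\ep$, and by path-connectedness of $W_\ep$ there is a path $\mathfrak{c}_2$ in $\overline{W_\ep}\subset\overline{\om\setminus D}$ from $Q$ to a point of $\der\om$; along it $dist(z,D)\ge\ep$ while $|z-P|\le diam(\om)+l/2\le 2R_0$ by \eqref{diam}, so $|z-P|\le(2R_0/\ep)\,dist(z,D)$.

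\textbf{Conclusion.} The curve $\mathfrak{c}:=\mathfrak{c}_1\cup\mathfrak{c}_2$ connects $P$ to $\der\om$, lies in $\om\setminus D$ except for its endpoint $P$, and satisfies $|z-P|\le C_2\,dist(z,D)$ for every $z\in\mathfrak{c}$ with $C_2:=\max\{\sigma^{-1},2R_0/\ep\}>1$, which depends only on the a priori data. I expect the delicate point to be exactly the uniform threshold $\ep_*$ in the second step, which is why the full strength of the assumptions defining $\mathfrak{D}$ and the geometric results of \cite{R08} are needed: without uniform control on the angles and on the Lipschitz regularity of $\om\setminus D$, an arbitrarily small dilation of $D$ could separate $Q$ from $\der\om$ inside the complement, and then no a-priori constant $C_2$ could exist.
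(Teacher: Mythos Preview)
Your argument is correct and follows essentially the same two-piece strategy as the paper: a cone segment out of $P$ followed by a path in the ``far'' set connecting to $\der\om$. The only point worth sharpening is the reference for your $\ep_*$: the connectedness of $W_\ep=\{x\in\overline{\om}\setminus D: dist(x,\partial D)>\ep\}$ for small $\ep$ is precisely Lemma~5.5 of \cite{ARRV}, and it uses only the Lipschitz regularity assumption \eqref{lip} on $\om\setminus D$ (not the finer polyhedral bounds or the results of \cite{R08}); the paper invokes that lemma directly and otherwise proceeds exactly as you do, with $\sin\vartheta$ in place of your $\tan^2(\vartheta/2)$.
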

This lemma corresponds to Proposition 3.3 in \cite{ADC} and to Lemma 4.1 in \cite{ADCMR} for $C^{1,\alpha}$ inclusions. Here, we prove it for $D\in\mathfrak{D}$.

\begin{proof}[Proof of Lemma \ref{lm3}]
By Assumption \eqref{lip} we can apply Lemma 5.5 in \cite{ARRV}, hence, there exists a positive number $a$ depending only on the Lipschitz constant $M_0$  such that the set
\begin{equation}\label{EDt}
    E^D_t=\left\{x\in\overline{\om}\setminus D\,:\, dist(x,\der D)>t\right\}
\end{equation}
is connected for $t\leq a r_0$. 

Let $P\in\der D$; by Remark \ref{remcono}, there exists a cone $\mathcal{C}(P,\nu,l,\vartheta)\subset\overline{(\om\setminus D)}$.  
By easy calculations we can see that, by choosing 
\begin{equation*}
    \tau_0=\frac{l}{1+\sin\vartheta},
\end{equation*}
the point $y_{\tau_0}=P+\tau_0\nu$ satisfies $dist\left(y_{\tau_0}, \partial C(P,\nu,l,\vartheta\right))=\tau_0\sin\vartheta$.
Let us now take $t_0=\min\left\{a r_0,\tau_0\right\}$.
Since $t_0\leq\tau_0$, we have
\begin{equation*}
    dist\left(y_{t_0},\om\setminus D\right)\geq    dist\left(y_{t_0},\partial \mathcal{C}(P,\nu,l,\vartheta)\right)\geq t_0\sin\vartheta,
\end{equation*}
hence $y_{t_0}\in E^D_{t_0\sin\vartheta}$.
Since $t_0\sin\vartheta<ar_0$, then $E^D_{t_0\sin\vartheta}$ is connected.

Let $\mathfrak{c}^\prime$ be a curve in $E^D_{t_0\sin\vartheta}$ that connects $y_{t_0}$ to $\partial\om$ and let $\mathfrak{c}=\mathfrak{c}^\prime\cup [y_{t_0},P]$, where $[y_{t_0},P]$ is the line segment from $P$ to $y_{t_0}$.

If $z\in\mathfrak{c}^\prime$, then $dist(z,\partial D)\geq t_0\sin\vartheta$, hence
\begin{equation*}
|z-P|\leq diam(\om)\leq R_0\leq \frac{R_0}{t_0\sin\vartheta}dist(z,\partial D).
\end{equation*}
If $z\in [y_{t_0},P]$, then $dist(z,\partial D)\geq |z-P|\sin\vartheta$.
In both cases
\begin{equation*}
    |z-P|\leq \max\left\{\frac{R_0}{t_0\sin\vartheta},\frac{1}{\sin\vartheta}\right\},\quad \forall z\in\mathfrak{c}.
\end{equation*}
\end{proof}
We are now ready to prove Proposition \ref{prop2}.
\begin{proof}[Proof of Proposition \ref{prop2}]
Let $P\in \partial D_0$, we have two different cases
\begin{enumerate}
    \item[(i)] $P\in\partial D_0\cap \partial \Omega_{\mathcal{G}}$;
    \item[(ii)] $P\in\partial D_0\setminus \partial \Omega_{\mathcal{G}}$.
\end{enumerate}
In case (i) we have that $P\notin Int(D_1)$, hence
\begin{equation*}
    dist(P,\partial D_1)=dist(P,D_1)\leq d_\mu (D_0,D_1).
\end{equation*}
In case (ii) we have $P\in Int(\Omega_{\mathcal{G}})$. 
By Lemma \ref{lm3}, let $\mathfrak{c}$ be a curve such that 
\begin{equation*}
    \mathfrak{c}\subset \om\setminus D_0
\end{equation*}
connects $P$ to $\partial \om$ and
\begin{equation}\label{1.12}
    |z-P|\leq C_2 dist(z,D_0),\quad\forall z\in \mathfrak{c}.
\end{equation}
Since $P\in Int(\Omega_{\mathcal{G}})$, $\mathfrak{c}$ intersects $\partial\Omega_{\mathcal{G}}$ and, since
\begin{equation*}
    \left(\mathfrak{c}\cap\partial D_0\right)\setminus \{P\}=\emptyset,
\end{equation*}
then
\begin{equation*}
    \left(\mathfrak{c}\cap \Omega_{\mathcal{G}}\right)\cap \partial D_1 \neq\emptyset.
\end{equation*}

Let $\overline{z}\in\mathfrak{c}\cap \partial\Omega_{\mathcal{G}}\cap\partial D_1$. We have
\begin{equation*}
    dist(\overline{z},D_0)\leq \sup_{x\in\partial D_1\cap \partial\Omega_{\mathcal{G}}}dist(x,D_0)\leq d_\mu(D_0,D_1)
\end{equation*}
and, by \eqref{1.12}, we have
\begin{equation}\label{2.12}
    \frac{1}{C_2}|\overline{z}-P|\leq dist(\overline{z},D_0)\leq d_\mu(D_0,D_1).
\end{equation}
Since $\overline{z}\in\partial D_1$ and by \eqref{1.12}
\begin{equation}\label{3.12}
    dist(P,\partial D_1)\leq |\overline{z}-P|,
\end{equation}
hence, from \eqref{2.12} and \eqref{3.12} we have
\begin{equation}\label{1.13}
    dist(P,\partial D_1)\leq C_2 d_\mu(D_0,D_1), \quad \forall P\in \partial D_0,
\end{equation}
and, by symmetry,
\begin{equation}\label{2.13}
    dist(Q,\partial D_0)\leq C_2 d_\mu(D_0,D_1), \quad \forall Q\in \partial D_1.
\end{equation}
Inequalities \eqref{1.13} and \eqref{2.13} imply that
\begin{equation*}
    d_H(\partial D_0,\partial D_1)\leq C_2 d_\mu(D_0,D_1).
\end{equation*}
\end{proof}
\subsection{A useful geometric construction}
The aim of this subsection (see Proposition \ref{lemmageometrico}) is the construction of a special tubular set contained in $\mathcal{G}$ that connects a special point on $\partial\Omega_{\mathcal{G}}$ to any point on $\partial\om$  (and particularly any point on $\Sigma$) and has a fixed positive distance from the rest of the boundaries of the two polyhedra.  In this set we will be able to propagate the information on the DtN map up the the boundary of $\partial \Omega_{\mathcal{G}}$.

In order to construct this tubular set, we need some information on the position of the boundaries of the two polyhedra when they are sufficiently close.   
Proposition \ref{prop4}  below, that is the adaptation to our setting of  Proposition 6.2 in \cite{R08}, states that, in a neighborhood of some point,  the boundaries of the two polyhedra are relative graphs of affine functions that are not too close (see \eqref{distPhi}) .

\begin{proposition}\label{prop4}
 There exist positive constants  $k_1\leq \overline{k}_0\leq k_0$, $K$, $K_1$ and $L_1$ depending only on the a priori data, such that, if $D_0$, $D_1\in\mathfrak{D}$ and 
\begin{equation*}
    d_H(D_0,D_1)\leq k_0 r_0,
\end{equation*}
then there exist $P_0\in\partial D_0$ and $P_1\in\partial D_1$ such that the following conditions are satisfied.
Up to a rigid transformation $P_0=(0,0,0)$, $P_1=(0,0,a_1)$ and
\begin{equation*}
    \partial D_0\cap B_{\overline{k}_0r_0}=\left\{
    (x_1,x_2,x_3)\in  B_{\overline{k}_0r_0}\,:\, x_3=\Phi_0(x_1,x_2)\right\},
\end{equation*}
\begin{equation*}
    \partial D_1\cap B_{\overline{k}_0r_0}=\left\{
    (x_1,x_2,x_3)\in  B_{\overline{k}_0r_0}\,:\, x_3=\Phi_1(x_1,x_2)\right\},
\end{equation*}
where $\Phi_0$ and $\Phi_1$ are Lipschitz functions with Lipschitz constant bounded by $L_1$ and such that $\Phi_0(0,0)=0$ and $\Phi_1(0,0)=a_1$.

Furthermore, on $B_{k_1r_0}^\prime=\left\{
    (x_1,x_2)\in  \RR^2\,:\, x_1^2+x_2^2\leq k_1^2r_0^2\right\}$ we have
    \begin{equation*}
        \Phi_0(x_1,x_2)=l^0_1x_1+l^0_2x_2,\quad 
        \Phi_1(x_1,x_2)=l^1_1x_1+l^1_2x_2\quad \forall (x_1,x_2)\in B_{k_1r_0}^\prime
    \end{equation*}
    \begin{equation*}
        S_0=\left\{(x_1,x_2,\Phi_0(x_1,x_2)\,:\, (x_1,x_2)\in B_{k_1r_0}^\prime \right\}\subset \partial D_0,
    \end{equation*}
    \begin{equation*}
        S_1=\left\{(x_1,x_2,\Phi_1(x_1,x_2)\,:\, (x_1,x_2)\in B_{k_1r_0}^\prime \right\}\subset \partial D_1,
    \end{equation*}
    \begin{equation*}
        (l^0_1-l^1_1)^2+ (l^0_2-l^1_2)^2 \leq \left(\frac{Kd_H(D_0,D_1)}{r_0}\right)^2,
    \end{equation*}
    \begin{equation*}
        |a_1|\leq Kd_H(D_0,D_1),
    \end{equation*}
    and
    \begin{equation}\label{distPhi}
        \left|\Phi_0(x_1,x_2)-\Phi_1(x_1,x_2)\right|\geq K_1(d_H(D_0,D_1))^3, \quad \forall (x_1,x_2)\in B_{k_1r_0}^\prime.
    \end{equation}
\end{proposition}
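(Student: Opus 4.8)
\textbf{Proof proposal for Proposition \ref{prop4}.}

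The plan is to adapt the proof of Proposition 6.2 in \cite{R08} to the present class $\mathfrak{D}$, exploiting the inclusion $\mathfrak{D}\subset\mathcal{A}_{p,0}(h)$. The starting point is to locate a point where the Hausdorff distance $d_H(D_0,D_1)$ is nearly attained and which lies in the \emph{relative interior of a face} of one of the two polyhedra, rather than near an edge or vertex. Concretely, I would first use Proposition \ref{prop1} to pass freely between $d_H(D_0,D_1)$ and $d_H(\partial D_0,\partial D_1)$, and then argue that, because each face of a polyhedron in $\mathfrak{D}$ contains a disc of radius $r_0$ (assumption \eqref{discofacce}) and edges have length at least $r_0$ (assumption \eqref{lunghlati}), a point realizing the boundary Hausdorff distance up to a controlled error can be replaced by a nearby point $P_0$ whose distance to the union of all edges of $D_0$ is bounded below by a fixed fraction of $r_0$ (and similarly $P_1$ on $\partial D_1$ lying in the interior of a face of $D_1$, opposite $P_0$ along the common normal direction). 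This uses the a priori bound $N_0$ on the number of faces (Remark \ref{rem1}): there are only finitely many faces, so one can select the face carrying the (almost) maximal displacement and retreat from its boundary by a universal amount.

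Once $P_0$ is in the interior of a face $F^{D_0}$ at distance $\gtrsim r_0$ from $\partial F^{D_0}$, and $d_H(D_0,D_1)\le k_0 r_0$ with $k_0$ small, the cone condition of Remark \ref{remcono} applied at $P_0$ (both into $D_0$ and into $\Omega\setminus D_0$) forces $\partial D_0$ to coincide, in a ball $B_{\overline k_0 r_0}$, with the flat face $F^{D_0}$; thus $\Phi_0$ is affine on $B'_{k_1 r_0}$ after shrinking, giving $\Phi_0(x_1,x_2)=l^0_1 x_1+l^0_2 x_2$, and $S_0\subset\partial D_0$. For $D_1$: since $P_1\in\partial D_1$ is within $d_H(D_0,D_1)\le k_0 r_0$ of $P_0$, and $D_1$ also satisfies the face/edge non-degeneracy with the \emph{same} constants, the same cone argument shows $\partial D_1$ is a single flat face near $P_1$ in $B_{\overline k_0 r_0}$; here one must also check that no edge or vertex of $D_1$ intrudes into $B_{k_1 r_0}$, which again follows from the cone condition of Remark \ref{remcono} (an edge or vertex of $D_1$ inside that ball would violate the two-sided cone inclusion at a point of $B'_{k_1 r_0}$). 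This yields the affine representation of $\Phi_1$, the inclusion $S_1\subset\partial D_1$, and the graph representations over $B_{\overline k_0 r_0}$ with Lipschitz constant $L_1$ controlled by the Lipschitz constant of $\mathfrak{D}$.

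The estimates $|a_1|\le K d_H(D_0,D_1)$ and $(l^0_1-l^1_1)^2+(l^0_2-l^1_2)^2\le (K d_H(D_0,D_1)/r_0)^2$ are the quantitative heart of the argument and are exactly the content of the corresponding step in \cite{R08}: the vertical offset $a_1$ between the two flat faces is bounded by the Hausdorff distance by definition of $P_0,P_1$; the gradient (slope) discrepancy is controlled because the two faces, being flat and within Hausdorff distance $d_H(D_0,D_1)$ of each other over a disc of radius $\sim r_0$, cannot differ in slope by more than $\sim d_H(D_0,D_1)/r_0$ — otherwise they would separate by more than $d_H(D_0,D_1)$ near the rim of $B'_{k_1 r_0}$. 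Finally, the lower bound \eqref{distPhi} is the delicate point and, I expect, \textbf{the main obstacle}: one cannot have $\Phi_0\equiv\Phi_1$ on $B'_{k_1 r_0}$ unless the two faces are literally coplanar, and the quantitative separation $\gtrsim d_H^3$ reflects the worst case in which the two flat faces are nearly coplanar but the polyhedra differ elsewhere; tracing how a global Hausdorff displacement of size $\delta:=d_H(D_0,D_1)$ propagates, through the edge-length and dihedral-angle non-degeneracy (assumptions \eqref{angolifacce}, \eqref{lunghlati}), to a pointwise gap of order $\delta^3$ at the chosen point is precisely the ``fine result from \cite{R08}'' alluded to in the text, and I would import it verbatim after verifying that the constants there depend only on $h$, hence only on the a priori data. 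The exponent $3$ should be understood as coming from two successive ``one power of $\delta$ lost per geometric non-degeneracy used'' reductions plus the base displacement, but I would not attempt to re-derive the sharp exponent here and would instead cite \cite{R08} for this inequality.
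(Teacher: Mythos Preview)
Your proposal is correct and aligns with the paper's own treatment: the paper does not prove Proposition~\ref{prop4} at all but simply introduces it as ``the adaptation to our setting of Proposition 6.2 in \cite{R08}'', relying on the inclusion $\mathfrak{D}\subset\mathcal{A}_{p,0}(h)$ noted at the start of the subsection. Your sketch of how the argument from \cite{R08} would run---locating $P_0$ away from edges, using the two-sided cone condition to get local flatness, reading off the slope and offset bounds from the Hausdorff proximity of two flat pieces over a disc of radius $\sim r_0$, and importing the cubic lower bound \eqref{distPhi} directly from \cite{R08}---is a faithful unpacking of that citation and goes beyond what the paper itself supplies.
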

\begin{remark}\label{rem5}
Notice that $k_0$ can be chosen such that $D_0$ and $D_1$ are on the same side with respect to $S_0$ and $S_1$.

We call $D_0$ the polyhedron for which the point $P_0\in \partial \Omega_{\mathcal{G}}$.

\end{remark}

Let us now introduce the description of a tubular neighborhood of a curve as was introduced in \cite{AS13,ADCMR}.
Let $P\in\partial\Omega_{\mathcal{G}}$ and let $\nu$ be a unit direction such that the line segment $[P,P+d\nu]$ is contained in $\mathcal{G}$ for some $d>0$. Let $\overline{P}$ be a point on $\partial\om$, consider a curve $\mathfrak{c}$ joining $\overline{P}$ to $P+d\nu$ and define, for some $R\in (0,d)$
\begin{equation*}
    V_{R}(\mathfrak{c})=\bigcup_{Q\in\mathfrak{c}}B_R(Q)\bigcup \mathcal{C}\left(P,\nu, \frac{d^2-R^2}{d},\arcsin\frac{R}{d}\right)
\end{equation*}
where $\mathcal{C}$ is the cone defined in \eqref{cone}.

In the next proposition, we show that such a set $V_{R}(\mathfrak{c})$ can be constructed in $\mathcal{G}$, see, for example, Figure \ref{fig:geometric_lemma}.

\begin{figure}[!h]
     \centering
     \includegraphics[width=10cm]{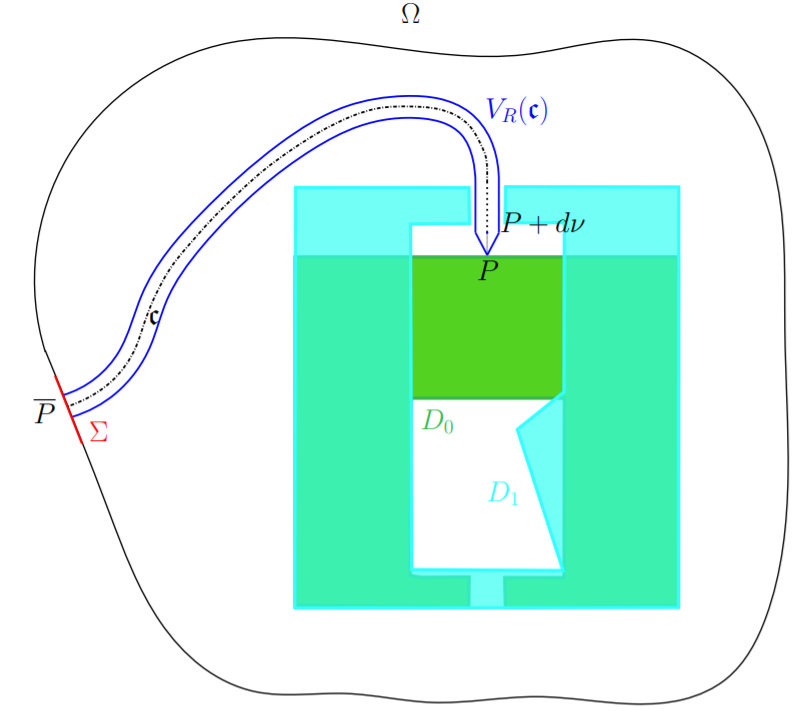}        \caption{A 2D-section of a possible configuration with the representation of $V_R(\mathfrak{c})$ and $\mathfrak{c}$ connecting $\overline{P}$ to $P+d\nu$.}\label{fig:geometric_lemma}
\end{figure}
\begin{proposition}\label{lemmageometrico}
If $D_0$, $D_1\in\mathfrak{D}$, there exist constants $C_3$, $d$, $R$ (with $R<d$) and $R_1$ depending only on the a priori data and there is a point $P\in\partial D_0\cap \partial\Omega_{\mathcal{G}}$ such that
\begin{equation}\label{1-19}
    C_3 d_\mu^3(D_0,D_1)\leq dist(P,D_1),
\end{equation}
\begin{equation}\label{infaccia}
  dist(P, \{\sigma^{D_0}_{ij} \}_{i\neq j}) \geq R_1,
\end{equation}
and such that, given any point $\overline{P}\in\partial\om$ there is a curve $\mathfrak{c}$ joining $\overline{P}$ to $P+d\nu$, where $\nu$ is the unit outer normal to $\partial D_0$,  such that 
\begin{equation}\label{2-19}
    V_{R}(\mathfrak{c})\subset \overline{\mathcal{G}}.
\end{equation}
\end{proposition}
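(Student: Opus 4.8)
The plan is to combine three ingredients: the modified-distance/Hausdorff-distance comparison (Propositions \ref{prop2} and \ref{prop1}), the local affine-graph representation near a special face point (Proposition \ref{prop4}), and the cone/tubular-neighborhood construction used in Lemma \ref{lm3}. First I would dispose of the trivial case: if $d_H(D_0,D_1)$ is bounded below by a fixed multiple of $r_0$ (so that Proposition \ref{prop4} does not apply), then $d_\mu(D_0,D_1)$ is also comparable to $r_0$ by \eqref{14} and Proposition \ref{prop2}, and one can just take $P$ to be a point deep inside a face of $D_0$ on $\partial\Omega_{\mathcal{G}}$ (using the Face non-degeneracy assumption \eqref{discofacce}, which guarantees a disc $B'_{r_0}(x_0)\subset F^{D_0}$, hence a point at distance $\ge r_0/2$ from all edges), with $\nu$ the outward normal; then \eqref{1-19} and \eqref{infaccia} hold with room to spare and the tubular set is built from a cone at $P$ plus any curve in $\mathcal{G}$ to $\overline P$, using that $\mathcal{G}$ is connected and open with Lipschitz boundary.

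So assume $d_H(D_0,D_1)\le k_0 r_0$ and apply Proposition \ref{prop4}: we obtain $P_0\in\partial D_0$, $P_1\in\partial D_1$, affine pieces $S_0\subset\partial D_0$, $S_1\subset\partial D_1$ over the disc $B'_{k_1 r_0}$, and the key separation \eqref{distPhi}. By Remark \ref{rem5} we may assume $P_0\in\partial\Omega_{\mathcal{G}}$; set $P=P_0$. The bound \eqref{infaccia} follows because, by the $C^{1,\alpha}$-type localization in Proposition \ref{prop4} (more precisely because $S_0$ is a flat disc of radius $k_1 r_0$ contained in a single face of $D_0$), the point $P_0$ sits at the center of a flat disc in $\partial D_0$, so its distance to the edge skeleton $\{\sigma^{D_0}_{ij}\}$ is at least $R_1:=k_1 r_0$. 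For \eqref{1-19}: at $P_0=(0,0,0)$ the distance to $D_1$ is controlled below by $|\Phi_0(0,0)-\Phi_1(0,0)|$ up to a dimensional factor coming from the Lipschitz bound $L_1$ on $\Phi_0,\Phi_1$ (the normal direction at $P_0$ is within a fixed angle of the $x_3$-axis), hence $dist(P_0,D_1)\ge c\,|a_1|$ is not quite what we want — instead we use \eqref{distPhi} evaluated near the origin, giving $dist(P_0,D_1)\ge c\, K_1 (d_H(D_0,D_1))^3$; then Propositions \ref{prop1} and \ref{prop2} convert $d_H(D_0,D_1)^3\ge C^{-1} d_\mu(D_0,D_1)^3$, yielding \eqref{1-19} with a suitable $C_3$.

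It remains to build $V_R(\mathfrak c)\subset\overline{\mathcal{G}}$. Take $\nu$ the outward unit normal to $\partial D_0$ at $P$. Because $S_0$ is a flat disc of radius $k_1 r_0$ in a face and $D_0,D_1$ lie on the same side of $S_0$ and $S_1$ (Remark \ref{rem5}), the half-tube $\{x+t\nu : x\in S_0,\ 0<t<\delta\}$ for small $\delta$ comparable to $k_1 r_0$ lies in $\mathcal{G}$: it is outside $D_0$ by construction, and outside $D_1$ because $S_1$ is the graph of $\Phi_1$ lying strictly on the far side by \eqref{distPhi} while $|a_1|\le K d_H(D_0,D_1)$ is negligible. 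Fix $d$ a small fixed multiple of $r_0$ so that $[P,P+d\nu]\subset\mathcal{G}$ with a fixed margin, and choose $R<d$ a small fixed multiple of $r_0$ so that the cone $\mathcal{C}(P,\nu,(d^2-R^2)/d,\arcsin(R/d))$ fits inside this half-tube, hence inside $\mathcal{G}$. Now $P+d\nu$ is an interior point of $\mathcal{G}$ at distance $\ge R$ from $\partial\mathcal{G}$; since $\mathcal{G}$ is connected, open, and has Lipschitz boundary with constants controlled by the a priori data, a standard connectedness argument (as in Lemma \ref{lm3}, using the connectedness of the sets $E^{D_0\cup D_1}_t$ for $t$ up to a fixed fraction of $r_0$ — note $\om\setminus D_i$ is Lipschitz so Lemma 5.5 of \cite{ARRV} applies to a neighborhood of $\mathcal{G}$) produces a curve $\mathfrak c$ from any $\overline P\in\partial\om$ to $P+d\nu$ staying at distance $\ge R$ from $\partial D_0\cup\partial D_1$, except near its endpoint $P+d\nu$ where it joins the cone; then $V_R(\mathfrak c)=\bigcup_{Q\in\mathfrak c}B_R(Q)\cup\mathcal{C}(P,\nu,(d^2-R^2)/d,\arcsin(R/d))\subset\overline{\mathcal{G}}$, as required.

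The main obstacle is the last step: one must check that the tube of radius $R$ around the connecting curve does not re-enter $D_0\cup D_1$ and that the gluing of the cone at $P$ to the curve-tube is done with all constants ($d$, $R$, $R_1$, $C_3$) depending only on the a priori data — in particular, that the curve can be routed through $\mathcal{G}$ while maintaining a uniform clearance $R\sim r_0$ from both polyhedral boundaries, which is exactly where the uniform Lipschitz regularity \eqref{lip} of $\om\setminus D_i$ and the resulting uniform interior-cone condition (Remark \ref{remcono}) are essential; the geometry near edges and vertices, where $\partial D_i$ is only Lipschitz, forces one to use the connectedness of the level sets $E^{D}_t$ rather than a naive normal-flow argument.
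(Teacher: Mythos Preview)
Your approach has the right skeleton (two cases according to the size of $d_H$, Proposition~\ref{prop4} in the small case, cone $+$ curve construction), but there is a real gap in the construction of $V_R(\mathfrak{c})\subset\overline{\mathcal{G}}$.

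In the main case you write that the curve from $\overline P$ to $P+d\nu$ can be found ``using the connectedness of the sets $E^{D_0\cup D_1}_t$'', and you justify this by noting that $\om\setminus D_i$ is Lipschitz. But Lemma~5.5 of \cite{ARRV} needs Lipschitz regularity of $\om\setminus(D_0\cup D_1)$, and this is exactly what fails for polyhedra in $\mathfrak{D}$: the union $D_0\cup D_1$ need not have Lipschitz boundary (the paper points this out explicitly when motivating Proposition~\ref{prop4}). So you have no uniform $t$ for which $E^{D_0\cup D_1}_t$ is connected, and the tube you build could in principle be pinched by $D_1$. The paper avoids this by a different trick: it routes the curve only through $E^{D_0}_{t_0}$ (which \emph{is} connected because $\om\setminus D_0$ is Lipschitz), takes $R=t_0/4$, and then observes that since in this case $d_H(D_0,D_1)\le C_1C_2 d_1\le t_0/2$, any point at distance $\ge t_0/4$ from $D_0$ is automatically outside $D_1$. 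Thus clearance from $D_1$ comes for free from the smallness of $d_H$, and the non-Lipschitz union never enters.

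There is also a smaller gap in your trivial case: you pick $P$ as an arbitrary face center via \eqref{discofacce}, but this gives no control on $dist(P,D_1)$ or even on $P\in\partial\Omega_{\mathcal{G}}$; that face could be inside $D_1$. The paper instead starts from the point $\widetilde P\in\partial D_0\cap\partial\Omega_{\mathcal{G}}$ where $d_\mu$ is attained (so $dist(\widetilde P,D_1)=d_\mu\ge d_1$) and then moves to a nearby interior face point $P$ with $dist(P,D_1)\ge d_1/2$, which is what makes \eqref{1-19} work.
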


\begin{proof}
Let us denote by $d_\mu=d_\mu(D_0,D_1)$ and let
\begin{equation*}
    d_1=\frac{r_0}{2C_1C_2}\min\left\{k_1,a\right\}.
\end{equation*}

We distinguish two cases. 

{\it Case 1}: $d_\mu\geq d_1$.
\newline
Note that, by Lemma 5.5 in \cite{ARRV}, the sets $E_t^{D_0}$ and $E_t^{D_1}$ (defined as in \eqref{EDt}) are connected for $t\leq d_1\leq ar_0$.

Let $\widetilde{P}$ be a point on $\partial D_0\cap \partial \Omega_{\mathcal{G}}$ be the point that satisfies
\begin{equation*}
 d_\mu(D_0,D_1)=dist(\widetilde{P},D_1).   
\end{equation*}
 Let $P$ be a point in the face containing $\widetilde{P}$ (or in one of the faces containing $\widetilde{P}$) such that \begin{equation*}
     dist(P, \{ \sigma_{ij}^{D_0}\}_{i\neq j})\geq \frac{d_1}{4}\sin \left(\frac{\theta_0}{2}\right)
 \end{equation*} 
 and 
 \begin{equation}\label{111}
     dist(P,D_1)\geq \frac{d_1}{2}.
 \end{equation}
 Consider the outer cone to $D_0$ at $P$ (see Remark \ref{remcono}). Since $P$ is internal to a face, the  direction $\nu$ can be chosen orthogonal to $\partial D_0$.

The point $P+\frac{d_1}{4}\nu$ belongs to $E_{D_1}^{\frac{d_1}{2}}$ and also to $E_{D_0}^{\frac{d_1}{4}\sin\vartheta}$ (with $\vartheta$ from Remark \ref{remcono}).

Then, given any point $\overline{P}\in \partial\om$ there is a curve $\mathfrak{c}$ joining $P+\frac{d_1}{4}\nu$ to $\overline{P}$ with distance bigger than $\frac{d_1}{4}\sin\vartheta$ from $\Omega_{\mathcal{G}}$.

Moreover we trivially have
\begin{equation*}
    d_\mu^3(D_0,D_1)\leq (diam(\om))^3\leq R_0^3,
\end{equation*}
hence, by \eqref{111}
\begin{equation*}
   d_\mu^3(D_0,D_1)\leq R_0^3 \frac{2dist(P,D_1)}{d_1} 
\end{equation*}
that gives \eqref{1-19}.

{\it Case 2: $d_\mu<d_1$}
\newline
 Since by Proposition \ref{prop1} and Proposition \ref{prop2} 
 \begin{equation*}
     d_H(D_0,D_1)\leq C_1 C_2 d_\mu(D_0,D_1)
 \end{equation*}
 we have
  \begin{equation*}
     d_H(D_0,D_1)\leq C_1 C_2 d_1 \leq k_0 r_0
 \end{equation*}
 so that the assumptions of Proposition \ref{prop4} hold true.
 
 
 Let now $P_0$ be the point in Proposition \ref{prop4} and let $\nu_0$ be the normal direction to $S_0$ (defined in Proposition \ref{prop4}). 
 Notice that, due to \eqref{distPhi} the cone of $\mathcal{C}(P_0,\nu_0, k_1r_0, \pi/2)$ is contained in $\mathcal{G}$.\\
 Let us take the point $P_0+\frac{k_1r_0}{2}\nu_0$ and notice that
 \begin{equation*}
     dist\left(P_0+\frac{k_1r_0}{2}\nu_0, D_0\right)=\frac{k_1r_0}{2}.
 \end{equation*}
 Let $t_0=\min\{\frac{k_1r_0}{2},ar_0\}$ so that
 $E_{t_0}^{D_0}$ is connected.
 
 Let $\mathfrak{c}$ be a curve joining $P_0+\frac{k_1r_0}{2}\nu_0$ to a point $\overline{P}\in\partial\om$ and such that $\mathfrak{c}\subset E_{t_0}^{D_0}$.
 By choosing $d=\frac{k_1r_0}{2}$ and $R=t_0/4$ the tubular set $V_{R}(\mathfrak{c})$ (starting from $P=P_0$) is contained in $\overline{\om\setminus D_0}$.

Now, since 
\begin{equation*}
    d_H(D_0,D_1)\leq C_1C_2d_1\leq \frac{t_0}{2}
\end{equation*}
the set $V_{R}(\mathfrak{c})$ is contained also in $\overline{\om\setminus D_1}$, and \eqref{2-19} follows.

Inequalities \eqref{1-19} and \eqref{infaccia} (for $P=P_0$ and $R_1=k_0r_0$) are a straightforward consequence of \eqref{distPhi}, \eqref{14} and \eqref{12}.
\end{proof}
\subsection{Estimating the distance between vertices of close polyhedra}
We now state and prove the main result of the section: if two polyhedra in $\mathfrak{D}$ are close enough, then they have the same number of vertices (and faces and edges). 

\begin{proposition}\label{distvert}
There exist two positive constants $\delta_0$ and $C$ depending only on the a priori data, such that, if for some $D_0$ and $D_1$ in $\mathfrak{D}$,
\begin{equation*}
    d_H\left(\partial D_0, \partial D_1\right)\leq \delta_0,
\end{equation*}
then $D_0$ and $D_1$ have the same number $N$ of vertices $\left\{ V^{D_0}_i\right\}_{i=1}^N$ and $\left\{V^{D_1}_i\right\}_{i=1}^N$, respectively, which can be ordered in such a way that
\begin{equation}\label{eq:distvert}
    dist\left(V^{D_0}_i,V^{D_1}_i\right)\leq C d_H\left(\partial D_0, \partial D_1\right).
\end{equation}
Moreover, for each edge or face in $D_0$ there is an edge or a face in $D_1$ with corresponding vertices.
\end{proposition}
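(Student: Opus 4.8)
The plan is to combine the Hausdorff-closeness hypothesis with the local a priori control on the geometry of polyhedra in $\mathfrak{D}$. First I would fix $\delta_0$ small, depending only on the a priori data, and much smaller than the scales $r_0$, $k_1 r_0$, $ar_0$, $k_0 r_0$ appearing in Remark \ref{remcono} and Propositions \ref{prop1}, \ref{prop4}. The guiding principle is that at scale comparable to $r_0$ every polyhedron in $\mathfrak{D}$ looks like a fixed bounded family of cones (a face looks flat, an edge looks like a dihedral wedge, a vertex looks like a polyhedral cone with controlled solid angle), and if $d_H(\partial D_0,\partial D_1)$ is tiny compared to these scales, then the boundary of $D_1$ in $B_{r_0/C}(x)$ must have the same local combinatorial type as $\partial D_0$ — flat near an interior face point, one edge near an edge point, one vertex near a vertex point — because the cone conditions of Remark \ref{remcono} are stable under small perturbations.

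Concretely, I would proceed in four steps. Step 1: matching vertices. Let $V^{D_0}$ be a vertex of $D_0$. Using the edge non-degeneracy \eqref{lunghlati}, dihedral non-degeneracy \eqref{angolifacce}, and face angle non-degeneracy \eqref{angolinterni}, the portion $\partial D_0\cap B_{cr_0}(V^{D_0})$ is bi-Lipschitz equivalent to a standard polyhedral cone whose "number of incident edges" is bounded by $N_0$ (Remark \ref{rem1}). Since $d_H(\partial D_0,\partial D_1)\le\delta_0$, $\partial D_1$ has a point within $\delta_0$ of $V^{D_0}$; I would argue that the only way $\partial D_1$ can be $\delta_0$-close to this cone point and still be (locally) a polyhedral boundary in $\mathfrak{D}$ is to have exactly one vertex $V^{D_1}$ in $B_{C\delta_0}(V^{D_0})$: a flat face or a single edge of $D_1$ cannot approximate a genuine vertex cone uniformly within $\delta_0$ once $\delta_0$ is small relative to $r_0$ and $\theta_0$, because the normal directions (resp. edge directions) would have to vary by an amount bounded below by a function of $\theta_0$ over a ball of radius $\sim r_0$. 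This gives an injective map $V^{D_0}\mapsto V^{D_1}$ with $dist(V^{D_0},V^{D_1})\le C\,d_H(\partial D_0,\partial D_1)$ — for the quantitative bound one intersects the outer/inner cones of Remark \ref{remcono} at the two vertices and uses that two such cones at distance $>C\delta_0$ cannot both be $\delta_0$-close to each other's apex. Step 2: by symmetry the same construction sends vertices of $D_1$ to vertices of $D_0$, and one checks the two maps are mutually inverse (again because a vertex of $D_1$ near $V^{D_0}$ must be the $V^{D_1}$ already produced, by the same separation argument), so $N$ is well defined and the vertices are in bijection.

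Step 3: matching edges and faces. An edge $\sigma^{D_0}_{ij}$ either joins two vertices or (ruled out here, or handled by \eqref{lunghlati}) and has length $\ge r_0$; its relative interior consists of edge points each admitting the dihedral cone description. Near the midpoint $m$ of $\sigma^{D_0}_{ij}$, at distance $\ge r_0/2$ from all vertices, $\partial D_1\cap B_{cr_0}(m)$ must contain exactly one edge of $D_1$ (same argument: a flat face can't approximate a genuine dihedral wedge of angle bounded away from $\pi$ within $\delta_0$, and two edges would force two nearby vertices), and this edge's endpoints are the images under Step 1 of the endpoints of $\sigma^{D_0}_{ij}$; one then uses that edges of $D_1$ are determined by their endpoint pairs to get a bijection of edges compatible with the vertex bijection, and similarly (via an interior face point with the disc condition \eqref{discofacce}) a bijection of faces. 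Step 4: conclude the displayed estimate \eqref{eq:distvert} from Step 1, the ordering being the one induced by the vertex bijection.

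The main obstacle is Step 1: turning the soft statement "a small $d_H$-perturbation of a vertex cone of type $\mathfrak{D}$ is again a vertex of the same type, at controlled distance" into a quantitative argument using only the Lipschitz-type a priori assumptions \eqref{angolifacce}–\eqref{lip} rather than any higher regularity. This is exactly where one must invoke the fine results from \cite{R08} (and the cone condition of Remark \ref{remcono}): the key quantitative input is that for polyhedra in $\mathfrak{D}$ the local model at a vertex, edge, or face point is \emph{stable} and \emph{rigid} at scale $\sim r_0$ — two admissible boundaries that are $\delta_0$-close in Hausdorff distance with $\delta_0\ll r_0$ must agree in combinatorial type locally and have corresponding singular points at distance $\lesssim\delta_0$. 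I would isolate this as the crux and cite \cite{R08} for the relevant rigidity/stability estimates, then assemble the bijections and the bound \eqref{eq:distvert} as above.
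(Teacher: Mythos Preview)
Your high-level architecture is right --- first pin each vertex of $D_0$ near a vertex of $D_1$, then upgrade to a bijection, then carry edges and faces along --- and your diagnosis of the ``main obstacle'' is accurate. But the resolution you propose for that obstacle is where the argument stalls. You defer the crux of Step~1 to \cite{R08}, asserting that the needed ``rigidity/stability estimates'' for vertex cones live there; in this paper, \cite{R08} is invoked only for Proposition~\ref{prop4} (local graph representation of faces away from edges), not for any vertex-matching statement. The claim that ``a flat face or a single edge of $D_1$ cannot approximate a genuine vertex cone uniformly within $\delta_0$'' is exactly what has to be \emph{proved}, and your sketch (``normal directions would have to vary by an amount bounded below'') does not supply a quantitative mechanism tied to the specific assumptions \eqref{angolifacce}--\eqref{angolinterni}.

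The paper's proof is a direct two-step contradiction argument with explicit trigonometry, modeled on the 2D version in \cite{BerFra20}. \emph{First step}: assume a vertex $V_1^{D_0}$ lies within $d_H$ of a face $F_i^{D_1}$ but at distance $>C_0 d_H$ from $\partial F_i^{D_1}$. Then the edges of $D_0$ emanating from $V_1^{D_0}$, being trapped in $(\partial D_1)^{(d_H)}$, must hit a sphere $\partial B_{C_1 d_H}(V_1^{D_0})$ inside a thin slab of thickness $\sim d_H$ parallel to $F_i^{D_1}$; one then computes (via cross products of the edge direction vectors $u,v,w$) that the unit normals $t,\tau$ to two adjacent faces of $D_0$ are both nearly vertical, forcing $|\cos\theta_{ij}|>\cos\theta_0$ and contradicting \eqref{angolinterni}. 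This yields $dist(V_1^{D_0},\partial F_i^{D_1})\le (C_0+1)d_H$, i.e.\ proximity to an \emph{edge} of $D_1$. \emph{Second step}: assume $V_1^{D_0}$ is near an edge $\sigma_{ij}^{D_1}$ but far ($>C_2 d_H$) from both endpoints. Then in a ball $B_{C_3 d_H}(V_1^{D_0})$ only the two faces $F_i^{D_1},F_j^{D_1}$ of $D_1$ are visible, and Step~1 forbids all edges of $D_0$ at $V_1^{D_0}$ from lying near just one of them; hence some face $F_1^{D_0}$ has one edge near each, and a portion of $F_1^{D_0}$ escapes $(\partial D_1)^{(d_H)}$, contradicting the Hausdorff bound. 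The bijection and the edge/face correspondence then follow from \eqref{lunghlati}.

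So: your Step~1 is not wrong in spirit, but it is not a proof as written --- the quantitative work is neither done nor correctly outsourced. If you want to salvage your approach, you would need to supply the angle computation (or an equivalent rigidity lemma) yourself rather than cite \cite{R08}.
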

\begin{proof}
The proof of Proposition \ref{distvert} follows the same idea of the proof of Proposition 3.3 in \cite{BerFra20} in the two dimensional setting. In that case we show that, if the Hausdorff distance between the boundaries is small enough, a vertex of one of the two polygons cannot be too far from vertices of the other polygon without violating the a priori assumptions.

For polyhedra the proof is more involved and it is divided in two steps: in the first step we show that the distance between an arbitrary vertex in $D_0$ from the edges of $D_1$ can be bounded by $ C d_H\left(\partial D_0, \partial D_1\right)$ where $C$ depends only on the a priori data. The main idea to prove this consists in showing that a small neighborhood of a face of one polyhedron cannot contain a vertex of the second polyhedron since the length of edges and width of angles are bounded from below by the a priori data. 

In the second step, we show that an arbitrary vertex of $D_0$ has distance smaller than $ C d_H\left(\partial D_0, \partial D_1\right)$ from a vertex in $D_1$. This time the idea is that a small neighborhood of a pair of intersecting faces cannot contain a vertex that does not violate assumption \eqref{angolinterni}. 
Since assumption \eqref{lunghlati} holds, if $d_H\left(\partial D_0, \partial D_1\right)$ is small enough there is a one to one correspondence between vertices of the two polyhedra.

For sake of brevity let us denote by
\begin{equation}\label{dH}
    d_H=d_H\left(\partial D_0, \partial D_1\right),
\end{equation}
and let
\begin{equation*}
    \left(\partial D_1\right)^{(d_H)}=\left\{x\in\RR^3\,:\,dist(x,\partial D_1)\leq d_H\right\}.
\end{equation*}
By definition of Hausdorff distance it follows that $\partial D_0\subset  \left(\partial D_1\right)^{(d_H)}$.

We can also assume that $\left(\Omega\right)_{d_H}\setminus \left(\partial D_1\right)^{(d_H)}$ is connected by \cite[Lemma 5.5]{ADCMR}, where
\begin{equation*}
    \left(\Omega\right)_{d_H}=\left\{x\in\Omega\,:\, dist(x,\partial\Omega)>d_H\right\}.
\end{equation*}

Let us choose an arbitrary vertex in $D_0$ and let us denote it by $V_1^{D_0}$. Let 
$F_i^{D_1}$ be a face of $D_1$ such that
\begin{equation*}
    dist(V_1^{D_0},F_i^{D_1})\leq d_H
\end{equation*}
(notice that such a face exists because $V_1^{D_0}\in  \left(\partial D_1\right)^{(d_H)}$).

Let us choose our coordinate system such that $V_1^{D_0}=(0,0,0)$, and $F_i^{D_1}$ lies on the plane $\{x_3=-c\}$ for $0\leq c\leq d_H$.

We now want to show that there exists a vertex (say $V_1^{D_1}$) of the polygon $F_i^{D_1}$ such that 
\begin{equation*}
    dist(V_1^{D_0},V_1^{D_1})\leq Cd_H 
\end{equation*}
where $C$ depends only on the a priori assumptions.

\textit{First step.}  Let us show that there exists $C_0$, depending only on the a priori data, such that, if $d_H$ is small enough, then
\begin{equation}\label{1-5}
    dist((0,0,-c),\partial F_i^{D_1})\leq C_0d_H
\end{equation}
and, hence, since $0\leq c\leq d_H$
\begin{equation}\label{3-5}
    dist(V_1^{D_0},\partial F_i^{D_1})\leq (C_0+1)d_H.
\end{equation}
In order to prove \eqref{1-5}, let us assume that
\begin{equation}\label{2-5}
    dist((0,0,-c),\partial F_i^{D_1})> C_0d_H
\end{equation}
and show that there is a constant $C_0$ such that \eqref{2-5} leads to a contradiction for sufficiently small $d_H$.

By assumption \eqref{angolinterni}, the cones with basis $B^\prime_{C_0d_H}\left((0,0,-c)\right)$ and height $C_0d_H\tan \theta_0$ do not intersect other faces of $D_1$ except $F_i^{D_1}$.

Let us take $C_0>\frac{1+\cos\theta_0}{\sin \theta_0}$. It is easy to show that the ball centered at $V_1^{D_0}$ with radius $C_1d_H$, where
$C_1=\frac{1}{2}\left(C_0\sin\theta_0-\cos\theta_0-1\right)$ does not intersect the set
\begin{equation*}
    \left\{x\in\RR^3\, :\, dist\left(x,\partial D_1\setminus F_i^{D_1}\right)\leq d_H\right\}.
\end{equation*}
Let us now take $d_H$ such that $C_1d_H<r_0$. This implies that the edges of $D_0$ that contain $V_1^{D_0}$, that are contained in $\left(\partial D_1\right)^{(d_H)}$, by definition of the Hausdorff measure, intersect $\partial B_{C_1d_H}\left(V_1^{D_0}\right)$ at points that lie between the planes $\pi^+=\left\{x_3=2d_H\right\}$ and $\pi^-=\left\{x_3=-2d_H\right\}$ (as a matter of fact the region on the ball that can contain these intersections is smaller, but we choose  this one to have a symmetric one).
\begin{figure}[!h]
     \centering
     \includegraphics[width=7cm]{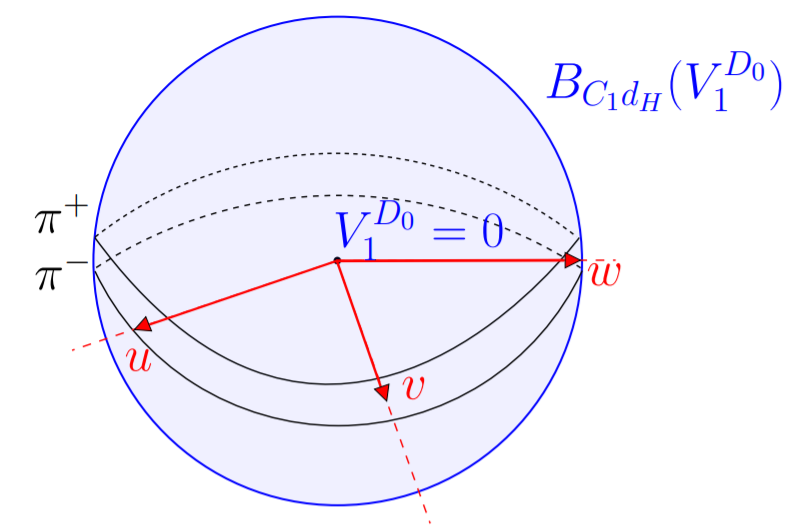}        
     \caption{First step of the proof. Note that if the strip is too small then the vectors are not all contained in it.}\label{fig:sphere}
\end{figure}

Let $\sigma_{ij}^{D_0}$ be one of the edges of $D_0$ that contains $V_1^{D_0}$ and let us denote by $v$ the position vector that represents the intersection of this edge with the sphere $B_{C_1d_H}\left(V_1^{D_0}\right)$.

Let $u$ and $w$ the position vectors with tips at the intersection of the edges of the  faces of $D_0$ adjacent to $\sigma_{ij}^{D_0}$ with $\partial B_{C_1d_H}\left(V_1^{D_0}\right)$

We have that
\begin{equation}\label{1-8}
\begin{split}
    |u|=|v|=|w|=C_1d_H,\\
    |u_3|,\, |v_3|,\, |w_3|\leq 2d_H.
\end{split}
\end{equation}
Let $\theta_{ij}$ denote the internal angle at the edge $\sigma_{ij}^{D_0}$.  We now show that, if $C_1$ (and, hence, if $C_0$) is big enough and $C_1d_H<r_0$, then 
\begin{equation*}
    |\cos \theta_{ij}|>\cos \theta_0
\end{equation*}
in contradiction with assumption \eqref{angolinterni}.

Let us consider the unit normal direction to the faces intersecting at $\sigma_{ij}^{D_0}$:
\begin{equation}
    t=\frac{u\times v}{|u\times v|}\mbox{ and }
    \tau=\frac{v\times w}{|v\times w|}.
\end{equation}
Notice that, by \eqref{1-8},
\begin{eqnarray*}
    |u\times v|^2&\leq& (u_1^2+u_2^2+v_1^2+v_2^2)(u_3^2+v_3^2)+ (u_1v_2-u_2v_1)^2\\
    &\leq& 16C_1^2d_H^4 +t_3^2|u\times v|^2,
\end{eqnarray*}
so that 
\begin{equation}\label{1-10}
    (1-t_3^2)|u\times v|^2\leq 16C_1^2d_H^4.
\end{equation}
From assumption \eqref{angolifacce} and \eqref{1-8} we have
\begin{equation}\label{2-10}
    |u\times v|^2=|u|^2|v|^2|\sin \hat{uv}|^2\geq (C_1d_H)^4 \sin^2\theta_0.
\end{equation}
From \eqref{1-10} and \eqref{2-10},
\begin{equation*}
    1-t_3^2\leq \frac{16}{C_1^2\sin^2\theta_0},
\end{equation*}
hence
\begin{equation}\label{3-10}
    1-|t_3|\leq \frac{16}{C_1^2\sin^2\theta_0}
\end{equation}
and, in the same way,
\begin{equation}\label{4-10}
    1-|\tau_3|\leq \frac{16}{C_1^2\sin^2\theta_0}.
\end{equation}
Now, by \eqref{3-10} and \eqref{4-10},
\begin{eqnarray}
    |\cos \theta_{ij}|&=&|t\cdot\tau|\geq |t_3||\tau_3|-|t^\prime||\tau^\prime|\\
&=&|t_3||\tau_3|-\sqrt{(1-|t_3|^2)(1-|\tau_3|^2)}\geq 1-\frac{48}{C_1^2\sin^2\theta_0}.\end{eqnarray}
For this reason, if 
\begin{equation}\label{1-11}
    C_1^2>\frac{48}{(1-\cos\theta_0)\sin^2\theta_0},
\end{equation}
we have that
\begin{equation*}
    |\cos\theta_{ij}|>\cos\theta_0
\end{equation*}
that contradicts \eqref{angolinterni}.

So, let us take, for example
\begin{equation*}
    C_0=2\left\{\frac{8\sqrt{3}}{\sqrt{1-\cos\theta_0}\sin^2\theta_0}+\frac{1+\cos\theta_0}{\sin\theta_0}\right\}.
\end{equation*}
With this choice, \eqref{1-11} holds, hence we have a contradiction for $d_H<\frac{r_0}{C_1}$. This implies that, for $d_H<\frac{r_0}{C_1}$,  \eqref{1-5} and \eqref{3-5} hold.

\textit{Second step.}  Since  \eqref{3-5} holds, there is an edge $\sigma_{ij}^{D_1}$ such that
\begin{equation*}
    dist\left(V_1^{D_0},\sigma_{ij}^{D_1}\right)\leq (C_0+1)d_H.
\end{equation*}
Let $V_1^{D_1}$ and $V_2^{D_1}$ be the endpoints of 
$\sigma_{ij}^{D_1}$.

We want to show that there is $C_2$ depending only on the a  priori data, such that, for $d_H$ small enough, either
\begin{equation*}
    dist\left(V_1^{D_0},V_1^{D_1}\right)\leq C_2d_H\mbox{ or }dist\left(V_1^{D_0},V_2^{D_1}\right)\leq C_2d_H.
\end{equation*}
Again, we proceed by contradiction and assume that
\begin{equation*}
    dist\left(V_1^{D_0},V_1^{D_1}\right)> C_2d_H\mbox{ and }dist\left(V_1^{D_0},V_2^{D_1}\right)> C_2d_H.
\end{equation*}
and get a contradiction with the a priori assumptions on $\mathfrak{D}$, see Figure \ref{fig:setting_step2}.
\begin{figure}[!h]
     \centering
     \begin{subfigure}[b]{0.3\textwidth}
         \centering
         \includegraphics[width=\textwidth]{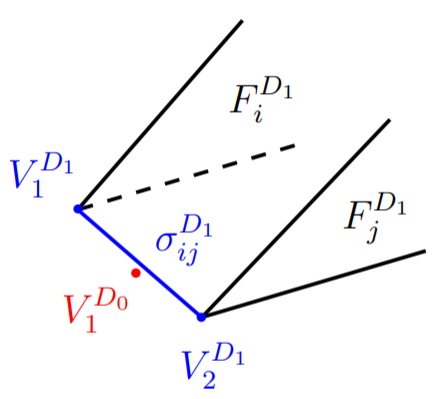}
         \caption{Geometrical setting.}
         \label{fig:polyhedronD0}
     \end{subfigure}
     \hfill
     \begin{subfigure}[b]{0.3\textwidth}
         \centering
         \includegraphics[width=\textwidth]{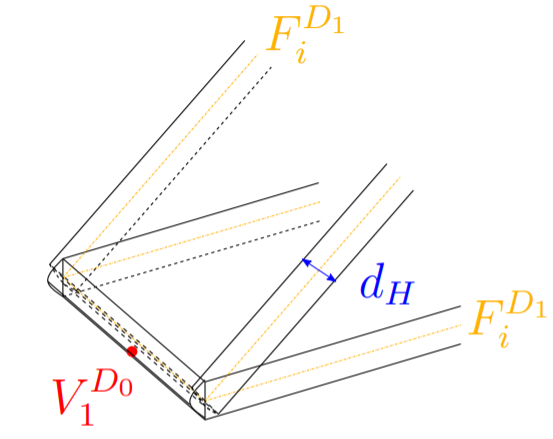}
         \caption{A partial $d_H$-neighborhood of the two faces $F^{D_1}_i$ and $F^{D_1}_j$.}
         \label{fig:polyhedronD1}
     \end{subfigure}
     \hfill
     \begin{subfigure}[b]{0.3\textwidth}
         \centering
         \includegraphics[width=\textwidth]{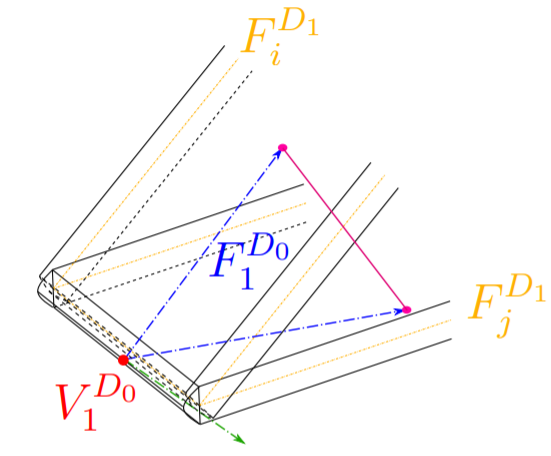}
         \caption{The two edges in blue of the face $F^{D_0}_1$.}
         \label{fig:configuration_second_step}
     \end{subfigure}
    \caption{A sketch of the geometrical setting for the second step of the proof.}
        \label{fig:setting_step2}
\end{figure}

Let $F_j^{D_1}$ be such that $F_i^{D_1}\cap F_j^{D_1}=\sigma_{ij}^{D_1}$.

As in the first step, by elementary calculations, there is a ball centered at $V_1^{D_0}$ of radius $C_3d_H$,  where $C_3$ depends on the a priori data and on $C_2$, such that
\begin{equation*}
    B_{C_3d_H}\left(V_1^{D_0}\right)\cap\left\{x\in\RR^3\, :\, dist\left(x,\partial D_1\setminus\left(F_i^{D_1}\cup F_j^{D_1}\right)\right)\leq d_H\right\}=\emptyset.
\end{equation*}
This implies that the intersections of all the edges containing $V_1^{D_0}$ with such ball lie in a $d_H$-neighborhood of the two faces. 
From the first step, we know that, it is not possible to have all these intersections in the neighborhood of only one of the two faces.
Hence, there is a face (say $F_1^{D_0}$) containing $V_1^{D_0}$ that has one edge in the neighborhood of $F_i^{D_1}$ and one in $F_j^{D_1}$. With calculations similar to the ones in the first step, that we omit for sake of shortness, it is possible to show that, for $C_2$ big enough, a part of the face $F_1^{D_0}$ does not belong to $\left(\partial D_1\right)^{(d_H)}$ contradicting the definition of the Hausdorff distance.
\end{proof}
\section{A first rough stability estimate}\label{sec4}
In this section we derive a rough stability estimate of polyhedral inclusions measured in the Hausdorff distance in terms of the operator norm of the partial DtN map. As shown in the previous section, this estimate is crucial to prove that the two polyhedra have the same vertices which can be ordered in such a way that they are close, see Proposition \ref{distvert}. 
\subsection{On some properties of the Green's function}
Let us first recall Alessandrini's identity. Let $u_0$ and $u_1$, with $\textrm{supp}(u_0\lfloor_{\partial\Omega})$,  $\textrm{supp}(u_1\lfloor_{\partial\Omega}) \subset\Sigma$, be solutions of the equations
\begin{equation*}
    \dive(\gamma_{D_i}\nabla u_i)=0,\qquad i=0,1,
\end{equation*}
and $\Lambda_{\gamma_{D_i}}^{\Sigma}$ the corresponding local DtN maps, for $i=0,1$. Then, it holds
\begin{equation}\label{eq:Alessandrini_identity}
    \big\langle (\Lambda_{\gamma_{D_0}}^{\Sigma}-\Lambda_{\gamma_{D_1}}^{\Sigma})u_0\big\lfloor_{\Sigma}, u_1\big\lfloor_{\Sigma} \big\rangle=\int_{\om}(k-1)(\chi_{D_0}-\chi_{D_1})\nabla u_0 \cdot \nabla u_1\, dx,
\end{equation}
where $\chi_{D_i}$, for $i=0,1$, is the characteristic function of $D_i$.

As in \cite{AleKim12,ARRV}, we introduce an augmented domain $\oms$, attaching to $\om$ an open set $\om_0$, in its exterior, whose boundary intersects $\partial\om$ on an open portion $\Sigma_0\Subset\Sigma$ such that $\Sigma_0$ has size which is a fraction of $r_0$. Let us choose $\Omega_0$ in such a way that $\oms:=\om\cup\Sigma_0\cup\om_0$ has the following properties: there exist $r_1$, $M_1$, depending only on $r_0$, and $M_0$
such that
\begin{enumerate}
  \item $\oms$ is open, connected with Lipschitz boundary with constants $r_1$, $M_1$;
  \item there exists $P_0\in\om_0$ such that
    \begin{equation}\label{eq:ball}
        B_{2r_1}(P_0)\subset\om_0.
    \end{equation}
\end{enumerate}
We extend the conductivity to be $1$ in $\om_0$ still denoting it with  $\gamma_D$.\\
Let $\Gamma(x,y)$ be the fundamental solution of the Laplace operator, that is the function
\begin{equation*}
    \Gamma(x,y)=\frac{1}{4\pi}\frac{1}{|x-y|},
\end{equation*}
and with $\Gs$ the Green's function, solution to 
\begin{equation}\label{eq:equ_green_func}
\begin{cases}
    \dive(\gamma_D\nabla \Gs(\cdot,y))=-\delta(\cdot-y) & \textrm{in}\  \oms \\
    \Gs(\cdot,y)=0 & \textrm{on}\ \partial\oms,
\end{cases}
\end{equation}
where $\delta(\cdot-y)$ is the Dirac distribution centered in $y$. Let us recall some properties of the Green function. For all $x,y\in\oms,\ x\neq y$, it holds 
\begin{equation*}
\begin{aligned}
        \Gs(x,y)=&\Gs(y,x)\\
        0<\Gs(x,y)\leq& \frac{c}{|x-y|},
\end{aligned}
\end{equation*}
where $c$ depends only on $k$, see \cite{AleVes05}. Fix a point $y\in\oms\setminus D$ and let $0<r_2=dist(y,\{\sigma_{ij}^{D}\}_{i\neq j}\cup \partial\oms)$, for $i,j=1,\cdots, H$. Then, there exists a constant $C>1$ depending only on the a priori data such that $B_{r_2/C}(y)$ contains at most a portion of one face of the polyhedron $D$. Hence, in this case, the ball is divided into two zones with different conductivity coefficient (thanks to \eqref{eq:conductivity_coeff}), that is, for a suitable coordinate system, there exists $a\in [-\frac{r_2}{C},\frac{r_2}{C}]$ such that
\begin{equation}\label{eq:gammahat}
    \widehat{\gamma}_D(x)=1+(k-1)\chi_{\{x_3>a\}}(x),\qquad \forall x\in B_{r_2/C}(y).
\end{equation}
We extend the coefficient $\widehat{\gamma}_D$ in $\mathbb{R}^3$, that is, we define
\begin{equation*}
    \widehat{\gamma}_y(x):=1+(k-1)\chi_{\{x_3>a\}}(x),\qquad \forall x\in \mathbb{R}^3,
\end{equation*}
where the same coordinate frame of \eqref{eq:gammahat} has been used.
Denote by $\widehat{\Gamma}$ the biphase fundamental solution of 
\begin{equation*}
    \dive(\widehat{\gamma}_y(\cdot)\nabla\widehat{\Gamma}(\cdot,y))=-\delta(\cdot-y),\qquad \textrm{in}\ \mathbb{R}^3.
\end{equation*}
We refer the reader to \cite{AleVes05} for more details on the biphase fundamental solution. In the following proposition,  we recall other useful properties of the Green function 
that come from some of the results in \cite{AleVes05,BerFra11,BerFraVes21}. 

\begin{proposition}
For all $C_1>1$ there exists a constant $C>0$ depending on the a priori data and $C_1$ such that, for all $y\in\oms\setminus D$ satisfying
\begin{equation*}
    dist(y,\{\sigma_{ij}^{D}\}_{i\neq j}\cup \partial\oms)\geq \frac{r_0}{C_1},\qquad i,j=1,\cdots,H,
\end{equation*}
it follows that  
\begin{equation}\label{eq:estimate_green_fundamental_solution}
    \|\Gs(\cdot,y)-\widehat{\Gamma}(\cdot,y)\|_{H^1(\oms)}\leq C,
\end{equation}
and, for all $\varrho>0$,
\begin{equation}\label{eq:H1_estim_green_func}
    \|\Gs(\cdot,y)\|_{H^1(\oms\setminus B_{\varrho}(y))}\leq C \varrho^{-\frac{1}{2}}.
\end{equation}
Let $P\in\partial D$. Without loss of generality, assume that $P$ belongs to the $i-th$ face $F_i$ and that 
\begin{equation*}
    dist(P,\{\sigma_{ij}^{D}\}_{i\neq j})\geq R_1,
\end{equation*}
and let $y_r=P+r\nu(P)$, $r>0$, where $\nu(P)$ is outer unit normal vector in $P$ to $\partial D$. Then, for all $r<\frac{R_1}{2}$, and $x\in D\cap B_{\frac{R_1}{2}}(P)$, we get that
\begin{equation}\label{eq:unif_est_green_fund}
    |\nabla \Gs(x,y_r)-\nabla\widehat{\Gamma}(x,y_r)|\leq C,
\end{equation}
where $\nabla \widehat{\Gamma}=\frac{2}{k+1}\nabla\Gamma(x,y_r)$.
\end{proposition}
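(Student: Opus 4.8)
The plan is to establish the three estimates \eqref{eq:estimate_green_fundamental_solution}, \eqref{eq:H1_estim_green_func}, \eqref{eq:unif_est_green_fund} one at a time, in each case reducing to an argument already available in \cite{AleVes05,BerFra11,BerFraVes21} and checking that every constant depends only on the a priori data (and, where relevant, on $C_1$). For \eqref{eq:estimate_green_fundamental_solution} the key observation is that the hypothesis $dist(y,\{\sigma_{ij}^{D}\}_{i\neq j}\cup\partial\oms)\ge r_0/C_1$ forces, as in \eqref{eq:gammahat}, a ball $B_{r_2/C}(y)$ with $r_2\gtrsim r_0/C_1$ inside which $\gamma_D$ coincides with $\widehat\gamma_y$ (a single flat interface, or none). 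Hence $\Gs(\cdot,y)$ and $\widehat\Gamma(\cdot,y)$ carry exactly the same leading singularity at $y$, so $w:=\Gs(\cdot,y)-\widehat\Gamma(\cdot,y)\in H^1(\oms)$ and, by \eqref{eq:equ_green_func}, solves $\dive(\gamma_D\nabla w)=\dive\big((\widehat\gamma_y-\gamma_D)\nabla\widehat\Gamma(\cdot,y)\big)$ in $\oms$ with $w=-\widehat\Gamma(\cdot,y)$ on $\partial\oms$; the two Dirac masses have cancelled. The right-hand side is supported in $\oms\setminus B_{r_2/C}(y)$, where $\nabla\widehat\Gamma(\cdot,y)$ is bounded by a constant depending only on the a priori data and $C_1$, and the boundary datum is bounded in $H^{1/2}(\partial\oms)$ for the same reason (the surface $\partial\oms$ stays at distance $\gtrsim r_0/C_1$ from $y$). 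A standard energy estimate for the conductivity equation on $\oms$, using that $\oms$ has Lipschitz boundary with constants $r_1,M_1$ and that $\gamma_D$ is bounded and bounded away from zero, then yields $\|w\|_{H^1(\oms)}\le C$.

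For \eqref{eq:H1_estim_green_func} I would combine the pointwise bound $0<\Gs(x,y)\le c|x-y|^{-1}$ with Caccioppoli's inequality on dyadic annuli. Setting $A_j=\{x\in\oms:2^j\varrho<|x-y|<2^{j+1}\varrho\}$ and applying Caccioppoli to the $\gamma_D$-harmonic function $\Gs(\cdot,y)$ on each $A_j$ gives $\int_{A_j}|\nabla\Gs(\cdot,y)|^2\le C(2^j\varrho)^{-2}\int_{\widetilde A_j}|\Gs(\cdot,y)|^2\le C(2^j\varrho)^{-1}$, where $\widetilde A_j$ is a slightly enlarged annulus on which $|\Gs(\cdot,y)|\le c(2^j\varrho)^{-1}$ and $|\widetilde A_j|\le C(2^j\varrho)^3$; summing the geometric series over $j$ with $2^j\varrho\lesssim R_0$ produces $\|\nabla\Gs(\cdot,y)\|_{L^2(\oms\setminus B_\varrho(y))}^2\le C\varrho^{-1}$, while the corresponding sum for the $L^2$ part is bounded by $CR_0$. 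For the annuli meeting $\partial\oms$ one uses the boundary version of Caccioppoli together with $\Gs(\cdot,y)=0$ on $\partial\oms$. Together these give \eqref{eq:H1_estim_green_func}.

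For \eqref{eq:unif_est_green_fund}, since $P$ lies in the interior of the face $F_i$ at distance $\ge R_1$ from every edge and $y_r=P+r\nu(P)$ with $r<R_1/2$, inside $B_{R_1/2}(P)$ the interface $\partial D$ is exactly the flat piece $F_i$, on which $\gamma_D$ agrees with $\widehat\gamma_{y_r}$, and $y_r$ sits on the side of $F_i$ outside $D$ where $\widehat\gamma_{y_r}=1$. Then the explicit biphase fundamental solution satisfies $\nabla\widehat\Gamma(x,y_r)=\frac{2}{k+1}\nabla\Gamma(x,y_r)$ for $x\in D\cap B_{R_1/2}(P)$ — the classical image/transmission formula for a point charge in the unit-conductivity half-space seen across a flat interface (see \cite{AleVes05}). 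Writing again $w=\Gs(\cdot,y_r)-\widehat\Gamma(\cdot,y_r)$, the first step gives $\|w\|_{H^1(\oms)}\le C$, and in $B_{R_1/2}(P)$ the function $w$ solves $\dive(\gamma_D\nabla w)=0$ with the single flat interface $F_i$. Transmission regularity for this configuration — equivalently, flattening the two half-balls and reflecting — upgrades $w$ to a function that is $C^{1,\alpha}$ up to $F_i$ from the side of $D$, on a slightly smaller ball, with $C^1$ norm controlled by $\|w\|_{H^1(\oms)}$ and hence by a constant depending only on the a priori data and $k$; shrinking $R_1$ by a fixed factor and absorbing it into the a priori data, this bounds $|\nabla w|$ pointwise on $D\cap B_{R_1/2}(P)$, and $|\nabla\Gs(x,y_r)-\nabla\widehat\Gamma(x,y_r)|=|\nabla w(x)|\le C$ there.

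The hard part will be the last estimate: one needs a \emph{quantitative} $C^1$ bound for $w$ up to the flat interface, i.e.\ piecewise (transmission) elliptic regularity with constants controlled by the data, and one must ensure that the scale $R_1$ and all geometric quantities entering the argument depend only on the a priori data — this is exactly where the non-degeneracy assumptions on $\mathfrak D$ (face non-degeneracy \eqref{discofacce}, edge length \eqref{lunghlati}, dihedral angle \eqref{angolifacce}) are used, to guarantee that near $P$, and near any admissible $y$, the interface reduces to a single, controlled flat piece. By contrast \eqref{eq:estimate_green_fundamental_solution} is routine once the cancellation of the singular parts is noticed, and \eqref{eq:H1_estim_green_func} is the standard decay estimate for Green's functions.
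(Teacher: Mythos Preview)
Your sketch is correct and, in fact, more detailed than what the paper provides: the paper does not prove this proposition at all but simply states it as a collection of known facts, writing ``we recall other useful properties of the Green function that come from some of the results in \cite{AleVes05,BerFra11,BerFraVes21}.'' Your three-step reconstruction --- cancelling the leading singularity to obtain an $H^1$ energy bound for $w=\Gs-\widehat\Gamma$, the dyadic Caccioppoli argument for \eqref{eq:H1_estim_green_func}, and then upgrading the $H^1$ bound on $w$ to a pointwise gradient bound via transmission regularity across the single flat interface in $B_{R_1/2}(P)$ --- is exactly the content of those references specialized to the present geometry, and every constant indeed depends only on the a priori data (and $C_1$) for the reasons you give.
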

\subsection{Estimating an auxiliary function}
Recalling \eqref{eq:omD}, for all $y,z\in\mathcal{G}$, we consider 
\begin{equation}\label{eq:func_S}
    S(y,z):=(k-1)\int_{\om}(\chi_{D_0}-\chi_{D_1})\nabla \Gs_0(x,y)\cdot \nabla \Gs_1(x,z)\, dx,
\end{equation}
where $\Gs_i$, for $i=0,1$, are solutions to \eqref{eq:equ_green_func}, where $\gamma_D=\gamma_{D_i}$. Note that
\begin{itemize}
    \item for all $z\in\oms\setminus\Omega_{\mathcal{G}}$,
    \begin{equation}
        \Delta_y S(\cdot,z)=0,\qquad \textrm{in}\ \oms\setminus\Omega_{\mathcal{G}};
    \end{equation}
    \item for all $y\in\oms\setminus\Omega_{\mathcal{G}}$,
    \begin{equation}
        \Delta_z S(y,\cdot)=0,\qquad \textrm{in}\ \oms\setminus\Omega_{\mathcal{G}};
    \end{equation}
    \item for all $y,z\in\Omega_0$, the Green's functions $\Gs_0$ and $\Gs_1$ do not have singularities in $\om$ and by the regularity of $\Gs_0$, $\Gs_1$ in $\oms\setminus\Omega_{\mathcal{G}}$,
    \begin{equation*}
        \Gs_0(\cdot,y)\lfloor_{\partial\om},\ \Gs_1(\cdot,z)\lfloor_{\partial\om}\in H^{\frac{1}{2}}_{co}(\Sigma), 
    \end{equation*}  
    that is, thanks to \eqref{eq:H1_estim_green_func},
    \begin{equation}\label{eq:bound_estimates_green_func}
        \|\Gs_0(\cdot,y)\|_{H^{\frac{1}{2}}_{co}(\Sigma)}, \|\Gs_1(\cdot,z)\|_{H^{\frac{1}{2}}_{co}(\Sigma)}\leq C,
    \end{equation}
    where $C$ depends only on the a priori data. In fact, for example
    \begin{equation}\label{eq: estimate_G_boundary}
        \begin{aligned}
            \|\Gs_0(\cdot,y)\|_{H^{\frac{1}{2}}_{co}(\Sigma)}&\leq \|\Gs_0(\cdot,y)\|_{H^{\frac{1}{2}}(\partial\om)}\\
            &\leq \|\Gs_0(\cdot,y)\|_{H^1(\om)}\leq \|\Gs_0(\cdot,y)\|_{H^1(\oms\setminus B_{r_1}(y))}\leq C r_1^{-\frac{1}{2}}. 
        \end{aligned}
    \end{equation}
    Analogously for $\Gs_1(\cdot,z)$.
\end{itemize}
In order to prove stability estimates in terms of the Hausdorff distance of the inverse problem under investigation, we need first to establish upper and lower bounds for the function $S(y,z)$ defined in \eqref{eq:func_S}. These are contained in the next two propositions. 
To simplify the presentation, we assume, without loss of generality, that using a rigid transformation of coordinates the point $P$ in Proposition \ref{lemmageometrico} coincides with the origin, i.e. $P=O$, and the outer unit normal vector $\nu$ is equal to $e_3$, where $e_3=(0,0,1)$. Moreover, in accordance to Definition \ref{def1}, we use the notation $\sigma^{\Omega_{\mathcal{G}}}_{ij}$, with $i\neq j$, to denote the edges of $\Omega_{\mathcal{G}}$.\\
The proofs of the following two propositions are in Appendix \ref{appendix:stability}. 
\begin{proposition}\label{th: stability_above}
Assume that 
\begin{equation}\label{eq:smallness_diff_DtN_map}
\left\| \Lambda^{\Sigma}_{\gamma_{D_0}}-\Lambda^{\Sigma}_{\gamma_{D_1}}\right\|_{\star}\leq \varepsilon,
\end{equation}
where $0<\varepsilon<1$. Under the notation of Proposition \ref{lemmageometrico}, let $Q=P+de_3$ and $\xi_h=P+he_3$, where $0<h<d_1$ and 
\begin{equation}\label{eq:d1_and_vartheta}
d_1:=d\left(1-\frac{\sin\vartheta}{4}\right)\qquad \textrm{and}\qquad \vartheta=\arctan\left(\frac{R}{d}\right).   
\end{equation}
Then, there exists two suitable constants $C_3$ and $C_4$ depending on the a priori data such that
\begin{equation}\label{eq:estimate_S}
    \big|S(\xi_h,\xi_h)\big|\leq \frac{C}{h}\varepsilon^{C_3 h^{C_4}}
\end{equation}
where $C$ depends on the a priori data. 
\end{proposition}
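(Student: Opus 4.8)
The plan is to estimate $|S(\xi_h,\xi_h)|$ by combining three ingredients: (i) a bound on $S$ at an interior reference point $Q = P+de_3$ coming from Alessandrini's identity and the smallness hypothesis \eqref{eq:smallness_diff_DtN_map}; (ii) an interior bound on $S$ coming from the fact that $S(y,z)$ is harmonic in each variable on $\oms\setminus\Omega_{\mathcal{G}}$ together with the $H^1$-type bounds \eqref{eq:H1_estim_green_func}--\eqref{eq:bound_estimates_green_func} on the Green's functions; and (iii) a three-spheres / propagation-of-smallness argument along the tubular set $V_R(\mathfrak{c})$ of Proposition \ref{lemmageometrico} that interpolates between the smallness at $Q$ and the interior bound, transporting the estimate down the axis of the cone $\mathcal{C}(P,\nu,\frac{d^2-R^2}{d},\arcsin\frac{R}{d})$ to the point $\xi_h = P+he_3$.

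First I would treat the two-variable function by freezing one variable. Fix $z=Q$ (or more precisely a point on the cone axis at height $d_1$) and consider $y\mapsto S(y,Q)$, which is harmonic in $\oms\setminus\Omega_{\mathcal{G}}$ and in particular in a neighborhood of the curve $\mathfrak{c}$ and the cone. Alessandrini's identity \eqref{eq:Alessandrini_identity}, applied with $u_0 = \Gs_0(\cdot,y)$ and $u_1=\Gs_1(\cdot,z)$ for $y,z$ deep inside $\Omega_0$ (so that these have no singularities in $\om$ and restrict to admissible boundary data by \eqref{eq: estimate_G_boundary}), gives $|S(y,z)| = |\langle(\Lambda^\Sigma_{\gamma_{D_0}}-\Lambda^\Sigma_{\gamma_{D_1}})\Gs_0(\cdot,y)|_\Sigma, \Gs_1(\cdot,z)|_\Sigma\rangle| \le \varepsilon\,\|\Gs_0(\cdot,y)\|_{H^{1/2}_{co}(\Sigma)}\|\Gs_1(\cdot,z)\|_{H^{1/2}_{co}(\Sigma)} \le C\varepsilon$, using \eqref{eq:bound_estimates_green_func}. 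This gives smallness of order $\varepsilon$ on a ball inside $\Omega_0\subset\mathcal{G}$. On the other hand, away from the singularities, \eqref{eq:H1_estim_green_func} yields a universal bound $|S(y,z)|\le C/h$ type estimates once $y,z$ are at distance $\gtrsim h$ from $\Omega_{\mathcal{G}}$ and from each other — here I would be a bit careful: for $y=z=\xi_h$ the two Green's functions share the singularity at $\xi_h$, so the bound degrades like a negative power of the distance $\mathrm{dist}(\xi_h,\Omega_{\mathcal{G}})\sim h$, which is the source of the $C/h$ prefactor in \eqref{eq:estimate_S}.

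Next I would carry out the propagation of smallness. Since $S(\cdot,z)$ is harmonic in the connected open set containing $V_R(\mathfrak{c})$, I invoke the three-spheres inequality iteratively along the chain of balls $B_R(Q)$ for $Q\in\mathfrak{c}$, and then a refined version along the cone $\mathcal{C}(P,\nu,\frac{d^2-R^2}{d},\arcsin\frac{R}{d})$ to reach points $\xi_h$ arbitrarily close to $P\in\partial\Omega_{\mathcal{G}}$. This is exactly the mechanism behind the appearance of $d_1 = d(1-\frac{\sin\vartheta}{4})$ and $\vartheta=\arctan(R/d)$ in \eqref{eq:d1_and_vartheta}: the ratios of radii in the iterated three-spheres inequality along the cone compound to an exponent that is a power of $h$, producing the doubly-exponential-in-$h$ factor $\varepsilon^{C_3 h^{C_4}}$. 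One performs the same argument in the $z$ variable (the function being harmonic in $z$ as well, for $y$ fixed outside $\Omega_{\mathcal{G}}$), so that the bound $|S(\xi_h,\xi_h)|\le \frac{C}{h}\varepsilon^{C_3 h^{C_4}}$ follows by chaining the two one-variable propagations through an intermediate point. The commented-out formulas for $C_3$, $C_4$ in terms of $A=1/d_1$ and $\chi = \frac{1-\sin\vartheta/4}{1+\sin\vartheta/4}$ are precisely the bookkeeping of how many three-spheres steps are needed and what the compounded Hölder exponent is.

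The main obstacle, and the step requiring the most care, is the quantitative propagation along the cone near the boundary point $P$: one must control how the Hölder exponent in the three-spheres inequality degenerates as the spheres shrink toward $P$, and show it degrades only polynomially in $h$ (hence the $h^{C_4}$), while simultaneously keeping the geometric configuration (the inclusion of $V_R(\mathfrak{c})$ in $\overline{\mathcal{G}}$, guaranteed by Proposition \ref{lemmageometrico}) valid uniformly in the a priori class. A secondary subtlety is handling the coincident singularity when $y=z=\xi_h$: I would either split $S(\xi_h,\xi_h)$ using the estimates \eqref{eq:estimate_green_fundamental_solution} and \eqref{eq:unif_est_green_fund} comparing $\Gs_i$ with the explicit biphase fundamental solution $\widehat{\Gamma}$ near $\xi_h$, isolating the singular local contribution (which is itself small because $\chi_{D_0}=\chi_{D_1}$ near the face through $P$ up to the thin slab controlled by $\mathrm{dist}(P,D_1)$), or absorb the singularity into the $C/h$ prefactor by working at distance $h$ and only then letting the two arguments coincide. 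Assembling these pieces yields \eqref{eq:estimate_S}, and the detailed constants are relegated to Appendix \ref{appendix:stability}.
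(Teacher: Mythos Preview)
Your proposal is correct and follows essentially the same approach as the paper's proof in Appendix~\ref{appendix:stability}: Alessandrini's identity yields $|S|\le C\varepsilon$ on a ball in $\Omega_0$, the $H^1$ estimate \eqref{eq:H1_estim_green_func} gives a crude bound $|S(y,z)|\le C(d_yd_z)^{-1/2}$, and iterated three-spheres inequalities---first along the curve $\mathfrak{c}$, then along the cone with shrinking balls---propagate the smallness in one variable and then the other to reach $(\xi_h,\xi_h)$. One minor slip to clean up: your parenthetical that ``$\chi_{D_0}=\chi_{D_1}$ near the face through $P$'' is not right (indeed $P\in\partial D_0$ while $\mathrm{dist}(P,D_1)>0$), but your second option---simply absorbing the singularity into the $C/h$ prefactor via the crude bound---is exactly what the paper does.
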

\begin{proposition}\label{th: stability_below}
Under the notation of Proposition \ref{lemmageometrico}, let $\xi_h=P+h e_3$. There exist $0<\overline{h}<\frac{1}{2}$ and $0<\overline{C}<1$ depending only on the a priori data such that
\begin{equation}\label{eq:estimate_from_below}
    |S(\xi_h,\xi_h)|\geq \frac{C}{h}\,\qquad \forall h,\ 0<h\leq \overline{h}\varrho,
\end{equation}
where 
\begin{equation}\label{eq:condition_on_varrho}
\varrho=\min\{dist(P,D_1),\overline{C}r_0\}    
\end{equation}
and $C$ depends on the a priori data. 
\end{proposition}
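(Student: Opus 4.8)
The plan is to reduce the lower bound on $|S(\xi_h,\xi_h)|$ to a computation involving only the biphase fundamental solutions $\widehat{\Gamma}$, using the interface regularity estimates already collected. Recall that $\xi_h = P + h e_3$ sits on the normal to the face $F$ of $D_0$ through $P$, that $dist(P,\{\sigma^{D_0}_{ij}\}_{i\ne j})\geq R_1$ by \eqref{infaccia}, and that $dist(P,D_1)\geq C_3 d_\mu^3(D_0,D_1)>0$ by \eqref{1-19}. Since $\varrho = \min\{dist(P,D_1),\overline{C}r_0\}$, for $h\leq\overline{h}\varrho$ the ball $B_{2h}(\xi_h)$ meets $D_0$ but is disjoint from $D_1$ (choosing $\overline{h}<\tfrac12$), so on $B_{2h}(\xi_h)$ we have $\chi_{D_0}-\chi_{D_1}=\chi_{D_0}=\chi_{\{x_3<0\}}$ in suitable coordinates. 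First I would split the integral defining $S(\xi_h,\xi_h)$ into the contribution from $B_{2h}(\xi_h)\cap D_0$ and the remainder over $D_0\setminus B_{2h}(\xi_h)$ (on $D_1\setminus B_{2h}(\xi_h)$ the integrand is harmless because there the two Green's functions are bounded away from their singularities).

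For the near contribution I would write $\nabla\Gs_0(x,\xi_h) = \nabla\widehat{\Gamma}(x,\xi_h) + \nabla(\Gs_0-\widehat{\Gamma})$ and likewise for $\Gs_1$, and then use \eqref{eq:unif_est_green_fund}, which says $|\nabla\Gs_i(x,\xi_h)-\nabla\widehat\Gamma(x,\xi_h)|\leq C$ uniformly on $D_0\cap B_{R_1/2}(P)$ with $\nabla\widehat\Gamma = \frac{2}{k+1}\nabla\Gamma(\cdot,\xi_h)$. The dominant term is then
\[
(k-1)\Big(\tfrac{2}{k+1}\Big)^2\int_{B_{2h}(\xi_h)\cap\{x_3<0\}}\nabla\Gamma(x,\xi_h)\cdot\nabla\Gamma(x,\xi_h)\,dx,
\]
which is an explicit integral of $|\nabla\Gamma|^2 = (4\pi)^{-2}|x-\xi_h|^{-4}$ over (essentially) a half-ball of radius proportional to $h$ at distance $h$ from the singularity; a direct computation in spherical coordinates gives a lower bound of the form $c/h$ with $c>0$ depending only on $k$ (hence on the a priori data). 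The error from replacing $\Gs_i$ by $\widehat\Gamma$ contributes a cross term bounded, via Cauchy--Schwarz and \eqref{eq:unif_est_green_fund}, by $C\int_{B_{2h}(\xi_h)}|\nabla\Gamma(x,\xi_h)|\,dx + C|B_{2h}(\xi_h)| \leq C' h$, which is lower order as $h\to0$. Similarly the far contribution over $D_0\setminus B_{2h}(\xi_h)$ is controlled by $\|\Gs_0(\cdot,\xi_h)\|_{H^1(\oms\setminus B_{2h}(\xi_h))}\|\Gs_1(\cdot,\xi_h)\|_{H^1(\oms\setminus B_{2h}(\xi_h))}$, which by \eqref{eq:H1_estim_green_func} is $\leq C h^{-1/2}\cdot h^{-1/2} = C h^{-1}$ — the same order as the main term, so one must be slightly more careful: I would instead localize the far estimate away from $\xi_h$ using that on $D_0\setminus B_{2h}(\xi_h)$ the distance to $\xi_h$ is $\geq 2h$ and argue via the pointwise gradient bound $|\nabla\Gs_i(x,\xi_h)|\leq C|x-\xi_h|^{-2}$ (interior gradient estimate for the equation, valid since $x$ stays away from edges by \eqref{infaccia}), giving $\int_{2h\leq|x-\xi_h|}|x-\xi_h|^{-4}dx \leq C/h$ but with a constant that can be made smaller than half the main term's constant by choosing $\overline{h}$ small — alternatively, absorb it since its sign near the singularity must match. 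Combining, $|S(\xi_h,\xi_h)|\geq (c - C\overline h)/h \geq C/h$ for $\overline h$ small enough.

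The main obstacle is making the far-field remainder genuinely subordinate to the $c/h$ main term: the crude $H^1$ bound \eqref{eq:H1_estim_green_func} only gives the same order $h^{-1}$, so one needs the sharper pointwise decay $|\nabla\Gs_i(x,\xi_h)|\lesssim |x-\xi_h|^{-2}$ together with the quantitative separation \eqref{1-19}, \eqref{infaccia} to ensure the relevant region of integration is bounded away from edges and from $D_1$, and then a careful bookkeeping of constants (all depending only on the a priori data) to fix $\overline{h}$ and $\overline{C}$ so that the positive explicit contribution dominates. A secondary technical point is verifying that $B_{2h}(\xi_h)$ is indeed disjoint from $D_1$ for all $h\leq\overline{h}\varrho$, which follows directly from the definition $\varrho\leq dist(P,D_1)$ and the triangle inequality once $\overline h < \tfrac12$.
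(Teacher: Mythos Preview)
Your overall strategy---isolate the singular contribution near $P$ and show the remainder is lower order---is right, but the execution has a genuine gap at the far-field step.

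The problem is your choice to split at radius $2h$. The near integral over $B_{2h}(\xi_h)\cap D_0$ of $|\nabla\Gamma(x,\xi_h)|^2$ indeed scales like $c/h$, but the far integral over $D_0\setminus B_{2h}(\xi_h)$ of the same integrand is \emph{also} of exact order $1/h$: in three dimensions $\int_{2h}^{R}r^{-4}r^2\,dr\sim 1/(2h)$, and in fact the far piece over the half-space is \emph{larger} than the near piece by a fixed numerical factor. Your first proposed fix (``constant can be made smaller than half the main term's constant by choosing $\overline{h}$ small'') is therefore false: the ratio of the two constants is independent of $h$ and of $\overline{h}$. Your second suggestion (``absorb it since its sign must match'') could in principle be made to work, but you do not develop it, and it is not automatic: the far integrand is $\nabla\Gs_0\cdot\nabla\Gs_1$, not $|\nabla\widehat\Gamma|^2$, and no sign is available without first replacing the Green's functions by fundamental solutions and controlling the cross terms over the whole of $D_0$. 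A further issue is that the pointwise bound $|\nabla\Gs_i(x,\xi_h)|\lesssim |x-\xi_h|^{-2}$ you invoke for the far part requires interior regularity, but $x$ ranges over all of $D_0$, including neighborhoods of its edges and vertices where such a bound is not available.

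The paper's proof avoids both problems by splitting at the \emph{fixed} radius $\varrho$ rather than at $2h$. Over $D_0\setminus B_\varrho(P)$ and over $D_1$ (which lies entirely outside $B_\varrho(P)$) the Green's functions are at distance $\gtrsim\varrho$ from their singularity, and the $L^2$ estimates \eqref{eq:estimate_green_fundamental_solution} and \eqref{eq:H1_estim_green_func}---which do not require any pointwise regularity near edges---give a bound $C_1+C_2 h^{-1/2}+C_3\varrho^{-1}$, genuinely lower order than $1/h$. Inside $D_0\cap B_\varrho(P)$ one subtracts the biphase fundamental solutions, bounds the cross terms by $Ch^{-1/2}$ again via $L^2$ estimates, and obtains the main positive contribution $\geq C/h$ from the explicit integral of $\nabla\widehat\Gamma_0\cdot\nabla\widehat\Gamma_1$. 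This yields
\[
|S(\xi_h,\xi_h)|\geq \frac{C}{h}\Bigl(1-C_1h^{1/2}-C_2\frac{h}{\varrho}-C_3h\Bigr),
\]
and now the parenthesis can be made $\geq 1/2$ by choosing $\overline{h}$ small, since $h\leq\overline{h}\varrho$ forces both $h^{1/2}$ and $h/\varrho$ to be small. The key point you missed is that the cutoff radius must be independent of $h$ so that the far-field error is $O(1)$ in $h$, not $O(1/h)$.
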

\begin{remark}
Note that $\varrho\leq \overline{C}r_0$ is needed in order to guarantee that a ball of center $P$ and radius $\varrho$ doesn't intersect edges and vertices of $\partial \Omega_{\mathcal{G}}\setminus \partial\Omega$.
\end{remark}
\subsection{Logarithmic stability estimates}
Now, we use Proposition \ref{th: stability_above} and Proposition \ref{th: stability_below} to prove the following logarithmic stability estimate.
\begin{theorem}\label{stablog}
Let the assumptions of Section \ref{sec:assumptions} apply. Let $D_0$, $D_1$ be two polyhedral inclusions in $\mathfrak{D}$. Let $1$ and $k$ be the conductivity coefficients of $\om\setminus D_i$ and $D_i$, for $i=0,1$, respectively.
If, for some $\varepsilon$ with $0<\varepsilon<1$,
\begin{equation*}
   \left\|\Lambda^{\Sigma}_{\gamma_{{D}_0}}-\Lambda^{\Sigma}_{\gamma_{{D}_1}}\right\|_{\star}\leq \varepsilon,
\end{equation*}
then
\begin{equation}\label{eq:stability_estimate}
    d_H(\partial D_0,\partial D_1)\leq \widetilde{\omega}(\varepsilon),
\end{equation}
where $\widetilde{\omega}(\varepsilon)$ is an increasing function in $[0,+\infty)$ such that
\begin{equation*}
    \widetilde{\omega}(t)\leq C |\log t|^{-\zeta},\qquad \textrm{for all}\,\,  0<t<1,
\end{equation*}
where $C>0$ and $\zeta$, $0<\zeta\leq 1$ are constants depending only on the a priori data.
\end{theorem}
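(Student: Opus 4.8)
The plan is to deduce the logarithmic stability estimate \eqref{eq:stability_estimate} by combining the upper bound on $|S(\xi_h,\xi_h)|$ from Proposition \ref{th: stability_above} with the lower bound from Proposition \ref{th: stability_below}, and then converting the resulting bound on $dist(P,D_1)$ into a bound on the Hausdorff distance via the geometric results of Section \ref{sec3}. First I would fix the point $P\in\partial D_0\cap\partial\Omega_{\mathcal{G}}$, the normal direction $\nu=e_3$, and the constants $d$, $R$, $R_1$, $\vartheta$ supplied by Proposition \ref{lemmageometrico}, and set $\varrho=\min\{dist(P,D_1),\overline{C}r_0\}$ as in \eqref{eq:condition_on_varrho}. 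For any $h$ with $0<h\le \min\{\overline{h}\varrho,\,d_1\}$ both propositions apply at $\xi_h=P+he_3$, so chaining \eqref{eq:estimate_from_below} and \eqref{eq:estimate_S} gives
\begin{equation*}
    \frac{C}{h}\le |S(\xi_h,\xi_h)|\le \frac{C'}{h}\,\varepsilon^{C_3 h^{C_4}},
\end{equation*}
from which $\varepsilon^{C_3 h^{C_4}}\ge c>0$, i.e. $C_3 h^{C_4}|\log\varepsilon|\le |\log c|$, hence $h^{C_4}\le C''/|\log\varepsilon|$.

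The next step is the optimization: the inequality $h^{C_4}\lesssim |\log\varepsilon|^{-1}$ must hold for \emph{every} admissible $h$, but the admissible range is $h\le \overline{h}\varrho$, so I would choose $h$ as large as allowed, essentially $h\sim \overline{h}\varrho$ (also checking $\overline{h}\varrho\le d_1$, which holds if $\varrho$ is small; if $\varrho$ is already bounded below by a constant, then $dist(P,D_1)$ is bounded below and there is nothing to prove after invoking the geometric comparisons). Substituting $h\sim\varrho$ yields $\varrho^{C_4}\le C/|\log\varepsilon|$, that is
\begin{equation*}
    \varrho\le C|\log\varepsilon|^{-1/C_4}.
\end{equation*}
Then \eqref{eq:condition_on_varrho} gives $dist(P,D_1)\le C|\log\varepsilon|^{-1/C_4}$ (for $\varepsilon$ small enough that the $\overline{C}r_0$ branch is inactive). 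Now Proposition \ref{lemmageometrico}, inequality \eqref{1-19}, reads $C_3 d_\mu^3(D_0,D_1)\le dist(P,D_1)$, so $d_\mu(D_0,D_1)\le C|\log\varepsilon|^{-1/(3C_4)}$, and Proposition \ref{prop2} upgrades this to $d_H(\partial D_0,\partial D_1)\le C_2 d_\mu(D_0,D_1)\le C|\log\varepsilon|^{-\zeta}$ with $\zeta=1/(3C_4)$. Finally one sets $\widetilde{\omega}(t):=C|\log t|^{-\zeta}$ for $0<t<1$ (suitably modified/continued to be increasing on $[0,+\infty)$ and to absorb the trivial regime $d_H\le diam(\Omega)\le R_0$ when $\varepsilon$ is not small), which is the claimed modulus; if $\zeta$ comes out larger than $1$ one simply replaces it by $1$, weakening the estimate, so one may assume $0<\zeta\le 1$.

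The main obstacle is the bookkeeping of the exponents and the range of validity of $h$: the constant $C_3$ in \eqref{eq:estimate_S} itself depends on $h$ (it is of the form $\beta_1\tau^{2+2|\log A|/|\log\chi|}$ with $A=1/d_1$ in the commented-out formula), so one must verify that after taking $h\sim\varrho$ the composite dependence on $\varrho$ and on $\varepsilon$ still collapses to a clean power of $|\log\varepsilon|^{-1}$; this is exactly the kind of propagation-of-smallness optimization carried out in \cite{ADC,ADCMR}, and the delicate point is ensuring all the implicit constants depend only on the a priori data and not on $D_0,D_1$ individually. A secondary technical point is handling the dichotomy in Proposition \ref{lemmageometrico} cleanly — when $d_\mu\ge d_1$ one does not get the cubic relation from \eqref{distPhi} but instead the direct bound \eqref{1-19} with $dist(P,D_1)$ bounded below by a fixed fraction of $d_1$, in which case $\varepsilon$ must be bounded below and \eqref{eq:stability_estimate} holds trivially by choosing $C$ large; merging the two cases into one increasing $\widetilde{\omega}$ requires a small amount of care but no new ideas.
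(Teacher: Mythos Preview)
Your proposal is correct and follows essentially the same route as the paper's proof: chain \eqref{eq:estimate_from_below} and \eqref{eq:estimate_S}, solve for $h$, set $h=\overline{h}\varrho$, then split on whether $\varrho=dist(P,D_1)$ or $\varrho=\overline{C}r_0$, and in the first case use \eqref{1-19} together with Proposition \ref{prop2} to pass from $d_\mu$ to $d_H(\partial D_0,\partial D_1)$ with exponent $\zeta=1/(3C_4)$. Your one misgiving is unnecessary: by the statement of Proposition \ref{th: stability_above} the constants $C_3$ and $C_4$ depend only on the a priori data (the $A=1/d_1$ in the formula you spotted involves the $d_1$ of \eqref{eq:d1_and_vartheta}, which is built from the geometric constants of Proposition \ref{lemmageometrico}, not from $h$), so no extra optimization is needed there.
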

\begin{proof}
By \eqref{eq:estimate_S} and \eqref{eq:estimate_from_below}, we have
\begin{equation*}
    \frac{\widetilde{C}}{h}\leq |S(\xi_h,\xi_h)|\leq \frac{\widehat{C}}{h} \varepsilon^{C_3 h^{C_4}}, \qquad \forall h,\ 0<h\leq\overline{h}\varrho,
\end{equation*}
that is
\begin{equation*}
    C\leq \varepsilon^{C_3 h^{C_4}},
\end{equation*}
where $C_3$, $C_4$ are the constants in \eqref{eq:estimate_S} and $0<C<1$. Since $0<\varepsilon<1$, from the last inequality we get
\begin{equation*}
    h\leq \widetilde{C} \left(\frac{1}{|\log\varepsilon|}\right)^{\frac{1}{C_4}},\qquad \forall h,\ 0<h\leq \overline{h}\varrho. 
\end{equation*}
In particular, choosing $h=\overline{h}\varrho$, we find
\begin{equation*}
    \varrho \leq C\left(\frac{1}{|\log\varepsilon|}\right)^{\frac{1}{C_4}}.
\end{equation*}
From \eqref{eq:condition_on_varrho}, we have to distinguish two cases.\\
\textit{Case 1: $\varrho=dist(P,D_1)$}. In this case, by \eqref{1-19}, we get
\begin{equation*}
    d^3_{\mu}(D_0,D_1)\leq \varrho\leq C \left(\frac{1}{|\log\varepsilon|}\right)^{\frac{1}{C_4}},
\end{equation*}
that is
\begin{equation*}
    d_{\mu}(D_0,D_1)\leq \varrho^{\frac{1}{3}}\leq C \left(\frac{1}{|\log\varepsilon|}\right)^{\frac{1}{3C_4}}.
\end{equation*}
Therefore, thanks to Proposition \ref{prop2}, we find
\begin{equation*}
    d_H(\partial D_0,\partial D_1) \leq C d_{\mu}(D_0,D_1)\leq \varrho^{\frac{1}{3}}\leq C \left(\frac{1}{|\log\varepsilon|}\right)^{\frac{1}{3C_4}}.
\end{equation*}
\textit{Case 2: $\varrho=\overline{C}r_0$}. Then, we obtain the assertion of the theorem simply noticing that
\begin{equation*}
    d_H(\partial D_0,\partial D_1) \leq diam(\om)\leq C r_0 \leq\frac{C}{\overline{C}} \left(\frac{1}{|\log\varepsilon|}\right)^{\frac{1}{C_4}},
\end{equation*}
where $C$ depends on the a priori data only.
\end{proof}

\section{On the regularity properties of the local DtN map}\label{sec5}
In this section we investigate the differentiability properties of the local DtN map. The first part of this section is devoted to the non trivial task of constructing a Lipschitz vector field $\mathcal{U}$ from $\mathbb{R}^3$ to $\mathbb{R}^3$ mapping $D_0$ to $D_1$ which is piecewise affine in a neighborhood of  $\partial D_0$ (Proposition \ref{vectorfield}) and to prove its main properties, see Proposition \ref{prop:property_Phi}.  Then in Proposition \ref{shapederivative} and Proposition \ref{prop7.4} we state the differentiability of the DtN map showing that its Gateaux derivative along  the direction $\mathcal{U}$ exists and is continuous. Furthermore, we derive a distributed formula for the Gateaux derivative and we use this representation to bound it from below (Proposition \ref{prop:lower_bound_F'}).

\subsection{Construction of a Lipschitz vector field mapping $D_0$ to $D_1$}
In this subsection we assume that 
\begin{equation}\label{vertices}
d_H(\partial D_0, \partial D_1)\leq \delta_0
\end{equation}
as in Proposition \ref{distvert}, hence it follows that the two polyhedra $D_0$ and $D_1$ have the same number of vertices such that
$$
dist(V^{D_0}_i,V^{D_1}_i)\leq C d_H(\partial D_0, \partial D_1),\qquad \textrm{for}\ i=1,\dots,N.
$$
For sake of shortness we again use the notation \eqref{dH}
\begin{equation*}
    d_H=d_H(\partial D_0, \partial D_1)
\end{equation*}

Let $\mathcal{W}\subset \om$ be a tubular neighborhood of $\partial D_0$ with width $\frac{r_0}{4}$ so that 
\begin{equation*}
    dist(\mathcal{W},\partial\om)\geq \frac{r_0}{2}.
\end{equation*}
In the sequel, we denote by $\mathcal{T}_0$ the union of non overlapping isosceles triangles contained in the faces of $D_0$ with basis on the sides of the polyhedron and height 
\begin{equation}\label{eq:h0}
h_0=\frac{r_0\min\{1, \tan(\theta_0/2)\}}{2}.
\end{equation} 
The following result holds:
\begin{proposition}\label{vectorfield}
There exists a vector field $\mathcal{U}:\mathbb{R}^3\rightarrow \mathbb{R}^3$ with $\mathcal{U}\in W^{1,\infty}(\mathbb{R}^3)$  and satisfying the following properties 
\begin{align}
    &\mathcal{U}(V^{D_0}_i)=V^{D_1}_i-V^{D_0}_i,\qquad \forall\ i=1,\ldots,N,\label{ass1:U}\\
   &\textrm{supp}\ \mathcal{U}\subset \overline{\mathcal{W}}, \label{ass2:U}\\
   & \mathcal{U}\ \text{continuous,piecewise affine on }  \mathcal{T}_0 ,\label{ass3:U}\\
   & |\mathcal{U}|+| D\mathcal{U}| \leq \widetilde{C} d_H,\label{ass4:U}
\end{align}
where $D\mathcal{U}$ dentoes the Jacobian matrix of $\mathcal{U}$ and $\widetilde{C}$ is a constant depending only on the a priori constants.  
\end{proposition}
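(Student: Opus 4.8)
The construction will proceed in three stages: first defining $\mathcal{U}$ on the skeleton (vertices and edges) of $\partial D_0$, then extending it to the faces via the triangulation $\mathcal{T}_0$, and finally extending it to a full neighborhood $\mathcal{W}$ of $\partial D_0$ in $\mathbb{R}^3$. At the vertices we are forced to set $\mathcal{U}(V_i^{D_0}) = V_i^{D_1} - V_i^{D_0}$ by \eqref{ass1:U}; by Proposition \ref{distvert} this vector has length at most $C d_H$, which will be the seed for the bound \eqref{ass4:U}. On each edge $\sigma_{ij}^{D_0}$, which by \eqref{lunghlati} has length at least $r_0$, I would interpolate linearly between the two endpoint values; since the two endpoint vectors differ by at most $2C d_H$ over a segment of length $\geq r_0$, the resulting map has gradient $O(d_H/r_0)$, consistent with \eqref{ass4:U} once we absorb $r_0$ into the a priori constants.

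Next I extend $\mathcal{U}$ to each face $F_j^{D_0}$. Near the boundary of the face — inside the union of isosceles triangles $\mathcal{T}_0$ with bases on the edges and height $h_0$ as in \eqref{eq:h0} — I define $\mathcal{U}$ to be the continuous piecewise affine interpolation determined by its already-assigned values on the vertices and edges (with value prescribed so that, on the inner boundary of $\mathcal{T}_0$, it is ready to be matched to zero). This is exactly the content of \eqref{ass3:U}. On the complement of $\mathcal{T}_0$ inside the face, and in the inner part of $D_0$, I set $\mathcal{U} \equiv 0$; the matching at the inner edge of each triangle in $\mathcal{T}_0$ must be arranged so that $\mathcal{U}$ is continuous there — concretely, the affine pieces should vanish on the side of each triangle opposite its base. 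The geometric point that makes this possible with controlled Lipschitz constant is that the height $h_0$ is comparable to $r_0$: we interpolate a function of size $O(d_H)$ down to $0$ across a strip of width $\geq h_0 \sim r_0$, giving gradient $O(d_H)$ after absorbing constants. The face-angle non-degeneracy \eqref{angolinterni} and edge-length bound \eqref{lunghlati} guarantee that the triangles in $\mathcal{T}_0$ can be chosen non-overlapping and with shape controlled by the a priori data, so the number of affine pieces is bounded (Remark \ref{rem1}) and each has uniformly controlled geometry.

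Finally I extend from $\partial D_0$ into the three-dimensional neighborhood $\mathcal{W}$. Using the bi-Lipschitz coordinates coming from assumption \eqref{lip} (or equivalently the cone condition of Remark \ref{remcono}), near each boundary point I can write $\mathcal{W}$ locally as a graph neighborhood and extend $\mathcal{U}$ by making it constant along the normal-like direction on a layer of width $\sim r_0/4$, then cutting off smoothly to $0$ as one exits $\mathcal{W}$; multiplying by a fixed Lipschitz cutoff supported in $\overline{\mathcal{W}}$ gives \eqref{ass2:U}. The cutoff has gradient $O(1/r_0)$ and multiplies a function of size $O(d_H)$, so the product still obeys \eqref{ass4:U}. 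Because $\partial D_0$ is only Lipschitz (not $C^1$), near edges and vertices these local graph patches overlap in a complicated way; the key is that finitely many patches (bounded by $N_0$) suffice and that on overlaps the two local extensions agree on $\partial D_0$ and have matching Lipschitz constants, so a partition of unity subordinate to the patches — with each bump function having gradient controlled by $r_0$ — glues them into a global $W^{1,\infty}$ field without losing the bound.

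\textbf{Main obstacle.}
The delicate point is the behavior near the edges and vertices of $D_0$: there the face triangulation $\mathcal{T}_0$, the edge-interpolation, and the normal extension into $\mathcal{W}$ all meet, and one must check that the piecewise-affine structure on $\mathcal{T}_0$ is genuinely continuous across shared edges of adjacent triangles and compatible with the three-dimensional extension, all while keeping the Lipschitz constant proportional to $d_H$. This requires using the lower bounds \eqref{angolifacce}, \eqref{angolinterni}, \eqref{lunghlati} on dihedral angles, face angles, and edge lengths in an essential way to certify that the interpolation strips of width $h_0$ near distinct edges of the same face are disjoint and that the cones of Remark \ref{remcono} give a uniform transversal direction. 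I expect this bookkeeping — rather than any single estimate — to be the technical heart of the argument, and it is presumably where the proof in the paper spends most of its effort.
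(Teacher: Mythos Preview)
Your proposal is plausible but takes a substantially different route from the paper, and your expectation about where the difficulty lies is off. The paper does \emph{not} carry out any three-dimensional patch-by-patch extension with partitions of unity; instead, after defining the continuous piecewise-affine map $f$ on $\mathcal{T}_0$ (much as you do, except that the value at each triangle apex is taken to be the displacement of the corresponding apex on $D_1$, not zero), it invokes Kirszbraun's extension theorem to produce in one stroke a Lipschitz map $\bar{\mathcal U}:\mathbb{R}^3\to\mathbb{R}^3$ agreeing with $f$ on $\mathcal{T}_0$ and with the same Lipschitz constant, and then multiplies by a smooth cutoff supported in $\mathcal{W}$. This completely bypasses the ``bookkeeping near edges and vertices'' that you flag as the main obstacle: there is none, because Kirszbraun requires no geometric structure on the domain of $f$ beyond the Lipschitz bound you have already established on $\mathcal{T}_0$.

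Your explicit normal-direction extension with a partition of unity would likely succeed, but it is considerably heavier, and it forces you to confront exactly the compatibility issues near edges and vertices that Kirszbraun renders moot. What your approach buys is a more constructive field (Kirszbraun is nonconstructive), and your choice to make $\mathcal{U}$ vanish on $\partial D_0\setminus\mathcal{T}_0$ is arguably cleaner for later use; what the paper's approach buys is a proof that is a short paragraph rather than several pages.
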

\begin{proof}
To construct the vector field $\mathcal{U}$ satisfying \eqref{ass1:U} - \eqref{ass4:U} observe that by Kirszbraun's theorem \cite[Theorem 1.31]{Sch69} it is always possible to extend a function $f:A\subset \mathbb{R}^3\to \mathbb{R}^3$ which is Lipschitz continuous on an arbitrary subset $A$ of $\mathbb{R}^3$ to a Lipschitz function $\bar{\mathcal{U}}:\mathbb{R}^3\to \mathbb{R}^3$ such that 
\begin{equation*}
    \bar{\mathcal{U}}\lfloor_A=f,
\end{equation*}
and $\bar{\mathcal{U}}$ having the same Lipschitz constant $L$ as $f$.\\ 
So, let us first construct the map $f$. We fix an arbitrary face $F_j^0$ of the polyhedron $D_0$. Assume that $F_j^0$ has $K$ sides. Then  on each side $l_i, i=1,\dots K$, we construct  isosceles triangles $\{T^0_i\}_{i=1}^K$, with basis  $l_i, i=1,\dots K$ and  height $h_0$, as defined in \eqref{eq:h0}, in such a way that all the triangles are strictly contained in $F_j^0$, disjoint and mutually intersecting only at the common vertex of $F_j^0$, see, for example, Figure \ref{fig:triangles}. 
\begin{figure}[!ht]
     \centering
     \includegraphics[width=5cm]{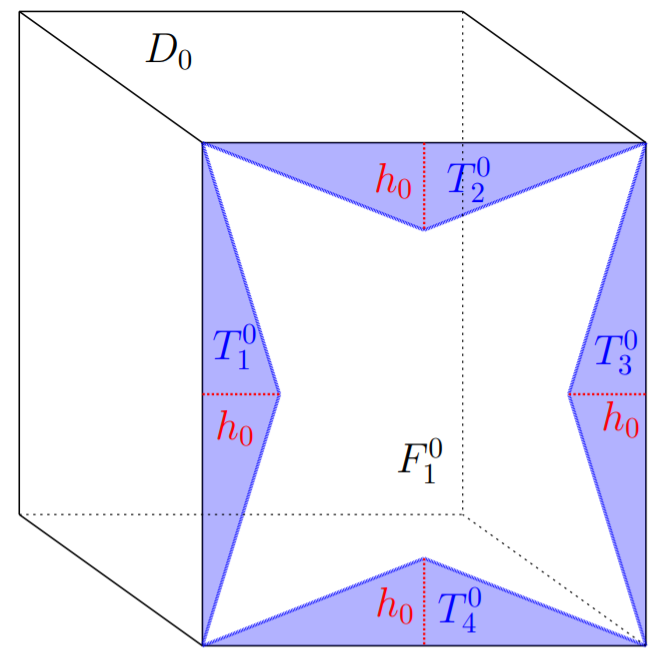}        \caption{Sketch of the construction of isosceles triangles on the face $F^0_1$ in the specific case of a cube $D_0$.}\label{fig:triangles}
\end{figure}
Thanks to the fact that $D_0,D_1\in\mathcal{D}$ and hence satisfy the same apriori assumptions, we can repeat exactly the same construction of triangles on the corresponding face $F_j^1$ of $D_1$.  We then construct a continuous piecewise affine map $\Phi_j$  defined on the $\cup_{i=1}^K T^0_i$ as follows: it is affine on each triangle of the partition and satisfies $\Phi_j(V_l^{T_i^0})=V_l^{T_i^1}-V_l^{T_i^0} $ for each $i=1,\dots K$ and $l=1,2,3$. 
By (\ref{vertices})  one has that \begin{equation}|V_l^{T_i^1}-V_l^{T_i^0}|\leq Cd_H\end{equation} for each $i=1,\dots K$ and $l=1,2,3$
and one can see that on $\cup_{i=1}^K T^0_i$ the map $\Phi_j$ satisfies
\begin{equation}\label{affine}
|\Phi_j(x)|\leq C_0 d_H,\quad \,\, |\Phi_j(x)-\Phi_j(y)|\leq C_1d_H|x-y|,\,\forall x,y\in \cup_{i=1}^K T^0_i
\end{equation}
where 
$C_0$ and $C_1$ depend only on the a-priori constants.
Consider now the  map $f$ defined on the collection of triangles $\mathcal{T}_0$ as follows: for any $x\in\mathcal{T}_0\cap F_j^0$ it satisfies $f(x)=\Phi_j(x)$. Clearly, $f$ is Lipschitz continuous and  satisfies (\ref{affine}) on $\mathcal{T}_0$.\\ 
Applying now Kirszbraun's theorem for $A=\mathcal{T}_0$  there exists a Lipschitz map $\bar{\mathcal{U}}$ from $\mathbb{R}^3$ to $\mathbb{R}^3$ which is Lipschitz continuous  and satisfies (\ref{affine}) for all $x,y\in \mathbb{R}^3$. Finally, by considering a real valued cut-off smooth function $\varphi:\mathbb{R}^3\rightarrow\mathbb{R}$ such that $0\leq \varphi\leq 1$, with compact support in $\mathcal{W}$ and with $\varphi=1$ in a tubular neighborhood of $\partial D_0$ of width $r_0/4$ and such that $|\nabla\varphi|\leq C$ with $C$ depending only on the apriori data  then it is straightforward  to see that $\mathcal{U}=\varphi\bar{\mathcal{U}}$ satisfies the desired properties \eqref{ass1:U} - \eqref{ass4:U}.

\end{proof}

As a consequence of the previous construction we have the following
\begin{proposition}\label{prop:property_Phi}
The map
\begin{equation*}
    \Phi_t=I+t\mathcal{U}\, \qquad t\in[0,1]
\end{equation*}
has the following properties
\begin{align}
    & \Phi_t \ \textrm{is piecewise affine on}\ \partial D_0;\label{eq:property_phi_1}\\
    & \Phi_t\in W^{1,\infty}(\om)\ \textrm{is invertible};\label{eq:property_phi_2}\\
    & |D\Phi_t-I|,\, |D\Phi^{-1}_t-I|\leq c td_H; \label{eq:property_phi_3} \\
    & \Phi_t(\om\setminus D_0)\subset\om; \label{eq:property_phi_4}\\
    & \left|\frac{d}{dt}\Phi_t\right|,\, \left|\frac{d}{dt}\Phi^{-1}_t\right|\leq Cd_H; \label{eq:property_phi_5} \\
    &\left|\frac{d}{dt}D\Phi_t\right|,\,\left|\frac{d}{dt}D\Phi^{-1}_t\right|\leq Cd_H; \label{eq:property_phi_6}\\
    & \left|\frac{d}{dt}D\Phi^{-1}_t+D\mathcal{U}\right|\leq Ctd^2_H; \label{eq:property_phi_7}\\
    &\left|\frac{d}{dt}(D\Phi^{-1}_t)^T+D\mathcal{U}^T\right|\leq Ctd^2_H, \label{eq:property_phi_8}
\end{align}
where $D\Phi_t$, $D\Phi^{-1}_t$, and $D\mathcal{U}$ are the Jacobian matrices of $\Phi_t$, $\Phi^{-1}_t$, and $\mathcal{U}$, respectively and $d_H$ is as in \eqref{dH}.
\end{proposition}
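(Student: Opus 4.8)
The plan is to read off all eight properties directly from Proposition \ref{vectorfield}, whose content is precisely that $\Phi_t=I+t\mathcal{U}$ is a small, quantitatively controlled ($|\mathcal{U}|+|D\mathcal{U}|\le\widetilde{C}d_H$) Lipschitz perturbation of the identity which is piecewise affine where it is needed. Throughout I would, possibly after further decreasing the constant $\delta_0$ of \eqref{vertices}, assume $\widetilde{C}d_H<1/2$, so that $t\,|D\mathcal{U}|<1/2$ for all $t\in[0,1]$.

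I would first dispatch the structural properties. Property \eqref{eq:property_phi_1} is immediate from \eqref{ass3:U}, since $\mathcal{U}$ is continuous and piecewise affine on $\mathcal{T}_0\subset\partial D_0$. For \eqref{eq:property_phi_2}, the map $x\mapsto x-\Phi_t(x)=-t\mathcal{U}(x)$ has Lipschitz constant $t\widetilde{C}d_H<1$, so the Banach fixed point theorem makes $\Phi_t$ a homeomorphism of $\RR^3$ with Lipschitz inverse, and $\Phi_t\in W^{1,\infty}$ is clear; since $\textrm{supp}\,\mathcal{U}\subset\overline{\mathcal{W}}\Subset\om$ by \eqref{ass2:U}, $\Phi_t$ is the identity outside $\overline{\mathcal{W}}$ and hence, being a bijection of $\RR^3$, maps $\om$ onto $\om$. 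Property \eqref{eq:property_phi_4} then follows from $\Phi_t(\om\setminus D_0)\subset\Phi_t(\om)=\om$ (alternatively, directly: outside $\overline{\mathcal{W}}$ one has $\Phi_t=I$, while on $\overline{\mathcal{W}}$, $|\Phi_t(x)-x|\le\widetilde{C}d_H<\frac{r_0}{2}\le dist(x,\der\om)$ by the choice of $\mathcal{W}$).

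Next come the differential estimates, all of which reduce to the Neumann series of $D\Phi_t=I+tD\mathcal{U}$. Since $\|tD\mathcal{U}\|<1/2$, $D\Phi_t$ is pointwise invertible with $D\Phi_t^{-1}=(I+tD\mathcal{U})^{-1}=\sum_{k\ge0}(-tD\mathcal{U})^k$, giving $|D\Phi_t-I|=t|D\mathcal{U}|\le\widetilde{C}td_H$ and $|D\Phi_t^{-1}-I|\le\sum_{k\ge1}(\widetilde{C}td_H)^k\le2\widetilde{C}td_H$, i.e.\ \eqref{eq:property_phi_3}. As $\Phi_t$ is affine in $t$, $\frac{d}{dt}\Phi_t=\mathcal{U}$ and $\frac{d}{dt}D\Phi_t=D\mathcal{U}$; differentiating $(I+tD\mathcal{U})D\Phi_t^{-1}=I$ gives $\frac{d}{dt}D\Phi_t^{-1}=-D\Phi_t^{-1}\,D\mathcal{U}\,D\Phi_t^{-1}$, and differentiating $\Phi_t(\Phi_t^{-1}(x))=x$ in $t$ gives the analogous $\frac{d}{dt}\Phi_t^{-1}=-\bigl(D\Phi_t^{-1}\,\mathcal{U}\bigr)\circ\Phi_t^{-1}$; bounding each factor using \eqref{ass4:U} and \eqref{eq:property_phi_3} yields \eqref{eq:property_phi_5} and \eqref{eq:property_phi_6}. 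Finally, writing $D\Phi_t^{-1}=I+E$ with $|E|\le2\widetilde{C}td_H$,
\begin{equation*}
\frac{d}{dt}D\Phi_t^{-1}+D\mathcal{U}=D\mathcal{U}-D\Phi_t^{-1}D\mathcal{U}D\Phi_t^{-1}=-\bigl(E\,D\mathcal{U}+D\mathcal{U}\,E+E\,D\mathcal{U}\,E\bigr),
\end{equation*}
whose norm is at most $2|E|\,|D\mathcal{U}|+|E|^2|D\mathcal{U}|\le Ctd_H^2$, which is \eqref{eq:property_phi_7}; transposing gives \eqref{eq:property_phi_8}.

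The one point I expect to require genuine care — the main obstacle — is the precise meaning of the $t$-derivatives in this $W^{1,\infty}$ setting, since $\mathcal{U}$ is merely Lipschitz and carries no classical second derivative. Here $D\Phi_t^{-1}$ must be understood as the pointwise matrix inverse $(D\Phi_t)^{-1}$, which for a.e.\ fixed $x$ is a rational, hence real-analytic, function of $t$ (with $D\mathcal{U}(x)$ frozen), so its $t$-derivative is classical and exactly the expression used above; this is why no second derivatives of $\mathcal{U}$ ever appear. For the inverse \emph{map}, $\Phi_t^{-1}(x)$ is characterized by the contraction $\Phi_t^{-1}(x)=x-t\,\mathcal{U}(\Phi_t^{-1}(x))$; a Lipschitz-in-$t$ estimate for it, together with the fact that the bi-Lipschitz map $\Phi_t$ sends the null set where $D\mathcal{U}$ fails to exist to a null set, shows that $t\mapsto\Phi_t^{-1}(x)$ is Lipschitz with a.e.\ derivative given by the displayed formula. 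Once this bookkeeping is in place, everything else is elementary.
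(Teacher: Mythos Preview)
Your proposal is correct and follows essentially the same approach as the paper's proof. The paper argues more tersely, citing ``analyticity in the parameter $t$ of $(D\Phi_t)^{-1}$'' and an external reference for the invertibility, whereas you spell out the Neumann series and the contraction-mapping argument explicitly; your added discussion of how to interpret the $t$-derivatives of $D\Phi_t^{-1}$ in the $W^{1,\infty}$ setting is more careful than the paper, which simply records $D\Phi_t^{-1}(y)=(D\Phi_t)^{-1}(\Phi_t^{-1}(y))$ via the Implicit Function Theorem and then appeals to analyticity in $t$.
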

\begin{proof}
Property \eqref{eq:property_phi_1} follows immediately from the definition of $\mathcal{U}$. In order to prove \eqref{eq:property_phi_2}, notice that 
\begin{equation*}
    |D\Phi_t-I|=t|D\mathcal{U}|\leq t\frac{\widetilde{C}_0 d_H}{r_0}\leq t\frac{\widetilde{C}_0\delta_0}{4r_0},
\end{equation*}
where the last inequality comes from the stability estimate \eqref{eq:stability_estimate}.
Now, by the equivalent Proposition 3.4 of \cite{BerFraVes21} possibly taking $\delta_0$ small enough so that
\begin{equation*}
    \frac{\widetilde{C}_0\delta_0}{4r_0}<\frac{1}{2},
\end{equation*}
it follows that $|D\Phi_t-I|\leq 1/2$ and $\Phi_t$ is invertible for all $t\in[0,1]$. Moreover, by the Implicit Map Theorem it follows that $D\Phi^{-1}_t(y)=(D\Phi_t)^{-1}(\Phi_t^{-1}(y))$ and the analyticity in the parameter $t$ of $(D\Phi_t)^{-1}$ gives     
\begin{equation*}
|D\Phi^{-1}_t-I|\leq \frac{\widetilde{C}_0 t}{r_0}d_H.
\end{equation*}
By construction of $\Phi_t$, it holds $\Phi_t(\om\setminus D_0)\subset \om$. Estimates \eqref{eq:property_phi_5} - \eqref{eq:property_phi_8} are a consequence of \eqref{ass4:U} and the analyticity of $\Phi^{-1}_t$ and $D\Phi^{-1}_t$ with respect to $t$.
\end{proof}

\subsection{On the differentiability properties of DtN map}
In this subsection, we state some results concerning the existence of the Gateaux derivative of the local DtN map along the direction of the vector field $\mathcal{U}$ (Proposition \ref{shapederivative})  and its continuity (Proposition \ref{prop7.4}). We do not provide the proofs of the two propositions since they can be obtained in the same way as in the two-dimensional case treated in Section 5 of \cite{BerFraVes21}.  \\

Let $D_t=\Phi_t(D_0)$ and $\gamma_{D_{t}}(x)=\gamma_{D_{0}}\left( \Phi^{-1}_t(x)\right)$. 
Given $f,g\in H^{\frac{1}{2}}_{co}(\Sigma)$, let $u_t$ be the solution to \eqref{eq:main_prob} with $\gamma_{D}=\gamma_{D_{t}}$ and $v_t$ the solution of the same equation satisfied by $u_t$ but with Dirichlet boundary data $g$ (see \eqref{eq:main_prob}).

We define
\begin{equation*}
    F(t,f,g)=\bigg\langle\Lambda^{\Sigma}_{\gamma_{D_t}}f\lfloor_{\Sigma},g\bigg\rangle=\int_{\om}\gamma_{D_t}\nabla u_t \cdot \nabla v_t\, dx=\bigg\langle\frac{\partial u_t}{ \partial n}\lfloor_{\Sigma},g\bigg\rangle
\end{equation*}
and 
\begin{equation*}
    \begin{aligned}
    A(t)&= D\Phi^{-1}_t (D\Phi^{-1}_t)^T\, \textrm{det}\,( D\Phi_t)\\
    \mathcal{A}=A'(0)&=\textrm{div}\, \mathcal{U}I-(D\mathcal{U}+D\mathcal{U}^T).
    \end{aligned}
\end{equation*}
The following results hold.
\begin{proposition}\label{shapederivative}

$F(t,f,g)$ is differentiable for all $t_0\in [0,1]$ and 
\begin{equation*}
    F'(t_0,f,g)=-\int_{\om}\gamma_{D_{t_0}}\mathcal{A}_{t_0}\nabla u_{t_0} \cdot \nabla v_{t_0}\, dx
\end{equation*}
where
\begin{equation*}
    \mathcal{A}_{t_0}=\frac{d}{dt}\left(\left(D\Phi_{t_0,t}^{-1}\right)\left(D\Phi_{t_0,t}^{-1}\right)^T\det\left(D\Phi_{t_0,t}\right)\right)\Big\lfloor_{t=t_0}
\end{equation*}
and $\Phi_{t_0,t}=I+t\mathcal{U}_{t_0}$, and $\mathcal{U}_{t_0}$ is a $W^{1,\infty}(\Omega)$ map satisfying the analogue properties as those introduced for $\mathcal{U}$ with $D_{t_{0}}$ instead of $D_0$.
In particular for $t=0$ 
\begin{equation*}
   F'(0,f,g)=-\int_{\om}\gamma_{D_0}\mathcal{A}\nabla u_0\cdot \nabla v_0\, dx.
\end{equation*}
\end{proposition}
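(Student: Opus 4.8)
The plan is to follow the strategy of Section~5 in \cite{BerFraVes21}, adapting it to the three-dimensional setting with the vector field $\mathcal{U}$ constructed in Proposition \ref{vectorfield}. The starting point is the weak formulation: for $f\in H^{\frac12}_{co}(\Sigma)$ the solution $u_t\in H^1(\om)$ of \eqref{eq:main_prob} with conductivity $\gamma_{D_t}$ satisfies $\int_\om \gamma_{D_t}\nabla u_t\cdot\nabla\phi\,dx=0$ for all $\phi\in H^1_0(\om)$, and $F(t,f,g)=\int_\om\gamma_{D_t}\nabla u_t\cdot\nabla v_t\,dx$. First I would pull everything back to the fixed domain via the change of variables $x=\Phi_{t_0,t}(y)$ (localized around $D_{t_0}$, so that outside $\mathcal{W}_{t_0}$ the map is the identity): writing $\widetilde u_t=u_t\circ\Phi_{t_0,t}$ one gets
\begin{equation*}
\int_\om \gamma_{D_{t_0}}A_{t_0,t}\nabla\widetilde u_t\cdot\nabla\phi\,dy=0,\qquad A_{t_0,t}=D\Phi_{t_0,t}^{-1}\bigl(D\Phi_{t_0,t}^{-1}\bigr)^{T}\det D\Phi_{t_0,t}\circ\Phi_{t_0,t},
\end{equation*}
so that $F(t,f,g)=\int_\om\gamma_{D_{t_0}}A_{t_0,t}\nabla\widetilde u_t\cdot\nabla\widetilde v_t\,dy$ with $\widetilde u_{t_0}=u_{t_0}$, $\widetilde v_{t_0}=v_{t_0}$. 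The regularity and analyticity-in-$t$ estimates for $\Phi_t$ and $\Phi_t^{-1}$ collected in Proposition \ref{prop:property_Phi} (in particular \eqref{eq:property_phi_3}, \eqref{eq:property_phi_5}--\eqref{eq:property_phi_8}) guarantee that $t\mapsto A_{t_0,t}$ is differentiable as a map into $L^\infty(\om)$, with derivative at $t=t_0$ equal to the matrix field appearing in the statement.

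Next I would show differentiability of $t\mapsto\widetilde u_t$ as a map into $H^1(\om)$ (equivalently, of the original $u_t$ in the moving domain, via the pullback). Differentiating the pulled-back equation formally, the derivative $\dot u:=\frac{d}{dt}\widetilde u_t\big|_{t=t_0}$ solves $\int_\om\gamma_{D_{t_0}}\nabla\dot u\cdot\nabla\phi\,dy=-\int_\om\gamma_{D_{t_0}}\dot A\nabla u_{t_0}\cdot\nabla\phi\,dy$ with zero boundary data, which is a well-posed problem by the Lax--Milgram theorem since $\gamma_{D_{t_0}}$ is uniformly elliptic and bounded. To make this rigorous one writes the difference quotient $\frac{\widetilde u_t-u_{t_0}}{t-t_0}$, subtracts the equation for $\dot u$, and uses the uniform ellipticity together with the $L^\infty$-Lipschitz bound on $t\mapsto A_{t_0,t}$ (and uniform $H^1$ bounds on $\widetilde u_t$, which follow from the energy estimate and boundedness of $A_{t_0,t}$) to conclude that the remainder tends to zero in $H^1$; the same for $\widetilde v_t$. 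Then $F'(t_0,f,g)$ is computed by the product rule applied to $F(t,f,g)=\int_\om\gamma_{D_{t_0}}A_{t_0,t}\nabla\widetilde u_t\cdot\nabla\widetilde v_t\,dy$, and the two terms involving $\nabla\dot u$ and $\nabla\dot v$ vanish: testing the equation for $u_{t_0}$ with $\dot v$ and the equation for $v_{t_0}$ with $\dot u$ (both legitimate since $\dot u,\dot v\in H^1_0(\om)$) kills exactly those contributions, leaving only $-\int_\om\gamma_{D_{t_0}}\mathcal{A}_{t_0}\nabla u_{t_0}\cdot\nabla v_{t_0}\,dy$. Specializing $t_0=0$, where $\Phi_{0,t}=\Phi_t=I+t\mathcal{U}$ and $\mathcal{A}=\operatorname{div}\mathcal{U}\,I-(D\mathcal{U}+D\mathcal{U}^T)$, gives the final displayed formula.

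The main obstacle I expect is the rigorous justification of the $H^1$-differentiability of $t\mapsto u_t$ (or $\widetilde u_t$) together with the fact that the formal derivative terms cancel: one must be careful that $\mathcal{U}$ is only Lipschitz (not smooth), so $D\mathcal{U}\in L^\infty$ but not continuous, and in particular $u_t$ has limited regularity across $\partial D_0$ and near the edges and vertices of $D_0$. This is precisely why the vector field is taken piecewise affine near $\partial D_0$ and compactly supported in $\mathcal{W}$, away from $\partial\om$; the argument therefore only needs the $L^\infty$ bounds on $D\Phi_t^{\pm1}-I$ and their $t$-derivatives from Proposition \ref{prop:property_Phi}, never pointwise derivatives of $\mathcal{U}$, and the whole scheme reduces to the standard one for $W^{1,\infty}$ perturbations of a uniformly elliptic divergence-form operator — which is exactly the content of the two-dimensional argument in Section~5 of \cite{BerFraVes21}, so I would refer to that paper for the unchanged details and only highlight the points where the three-dimensional geometry of $\mathcal{U}$ enters.
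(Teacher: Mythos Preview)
Your proposal is correct and follows exactly the approach the paper adopts: the paper does not give a proof here but simply refers to Section~5 of \cite{BerFraVes21}, and your outlined argument (pull back via $\Phi_{t_0,t}$ to a fixed domain, differentiate $A_{t_0,t}$ in $L^\infty$ using Proposition~\ref{prop:property_Phi}, establish $H^1$-differentiability of $\widetilde u_t$ via difference quotients and Lax--Milgram, then cancel the $\dot u,\dot v$ terms by testing the equations with $\dot v,\dot u\in H^1_0(\om)$) is precisely the two-dimensional scheme of that reference, unchanged in three dimensions. Your closing remark that only the $L^\infty$ bounds of Proposition~\ref{prop:property_Phi} are needed, not pointwise smoothness of $\mathcal{U}$, is the right justification for why nothing new is required here.
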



\begin{proposition}\label{prop7.4}
There exist constants $C, \beta_3>0$ depending only on the a priori data such that for all $t\in[0,1]$
\begin{equation*}
    |F'(t,f,g)-F'(0,f,g)|\leq C \|f\|_{H^{\frac{1}{2}}_{co}(\Sigma)}\|g\|_{H^{\frac{1}{2}}_{co}(\Sigma)} t^{\beta_3}d_H^{1+\beta_3},
\end{equation*}
for $d_H$ as in \eqref{dH}.
\end{proposition}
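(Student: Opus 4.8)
The plan is to establish Proposition \ref{prop7.4} as a quantitative continuity estimate for the shape derivative $F'(\cdot,f,g)$ along the vector field, by writing the difference $F'(t,f,g)-F'(0,f,g)$ as a sum of terms that are each controlled by products of $\|\nabla u_t\|_{L^2}$, $\|\nabla v_t\|_{L^2}$ (or $H^1$ norms on suitable subdomains) and the small quantities coming from Proposition \ref{prop:property_Phi}. First I would use the distributed representation formula of Proposition \ref{shapederivative}, which gives
\begin{equation*}
F'(t,f,g)=-\int_{\om}\gamma_{D_t}\mathcal{A}_t\nabla u_t\cdot\nabla v_t\,dx,\qquad
F'(0,f,g)=-\int_{\om}\gamma_{D_0}\mathcal{A}\nabla u_0\cdot\nabla v_0\,dx,
\end{equation*}
so that the difference splits (after adding and subtracting intermediate terms) into: a term where the matrix-valued coefficient $\gamma_{D_t}\mathcal{A}_t$ is compared with $\gamma_{D_0}\mathcal{A}$, and terms where $\nabla u_t$ is compared with $\nabla u_0$ and $\nabla v_t$ with $\nabla v_0$.

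The coefficient comparison is the place where the powers of $t$ and $d_H$ are generated: using \eqref{eq:property_phi_3}, \eqref{eq:property_phi_5}--\eqref{eq:property_phi_8} one shows $\|\mathcal{A}_t-\mathcal{A}\|_{L^\infty}\le C t d_H^2$ plus, crucially, the fact that both $\mathcal{A}_t$ and $\mathcal{A}$ are supported in $\overline{\mathcal{W}}$ and are themselves $O(d_H)$ in size (since $\mathcal A=\dive\mathcal U\,I-(D\mathcal U+D\mathcal U^T)$ and $|D\mathcal U|\le\widetilde C d_H$). Thus the ``bad'' region where $u_t\neq u_0$ as functions and the coefficients differ is the thin tubular neighborhood $\mathcal W$ of $\partial D_0$, and all integrands carry at least one factor $d_H$. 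I would then invoke the interior/near-interface regularity of solutions to \eqref{conductivity} away from edges and vertices — exactly the ingredient the introduction advertises — together with the energy estimates $\|\nabla u_t\|_{L^2(\om)}\le C\|f\|_{H^{1/2}_{co}(\Sigma)}$ and the analogous bound for $v_t$, to estimate the $L^2$ norms of $\nabla u_t$ and $\nabla v_t$ over $\mathcal W$ and to control $\|\nabla u_t-\nabla u_0\|_{L^2}$ and $\|\nabla v_t-\nabla v_0\|_{L^2}$ by $C t^{\beta_3} d_H^{\beta_3}\|f\|_{H^{1/2}_{co}(\Sigma)}$ for a suitable Hölder exponent $\beta_3\in(0,1)$; this continuity-with-rate of $t\mapsto\nabla u_t$ is itself obtained by subtracting the weak formulations for $u_t$ and $u_0$, changing variables through $\Phi_t$, and using \eqref{eq:property_phi_3} again, as in Section 5 of \cite{BerFraVes21}.

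Combining these, each term in the decomposition is bounded by $C\|f\|_{H^{1/2}_{co}(\Sigma)}\|g\|_{H^{1/2}_{co}(\Sigma)}\,t^{\beta_3}d_H^{1+\beta_3}$: one factor $d_H$ comes from the size of $\mathcal A$ (or $\mathcal A_t$), and the extra $t^{\beta_3}d_H^{\beta_3}$ comes either from $\|\mathcal A_t-\mathcal A\|_{L^\infty}\le Ctd_H^2$ (which is even better) or from the Hölder-in-$t$ continuity of the gradients, whose modulus degrades like $(td_H)^{\beta_3}$ because the underlying quantitative unique-continuation / regularity estimate near the Lipschitz interface is only Hölder, not Lipschitz. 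The main obstacle is precisely this last point: proving the rate $t^{\beta_3}d_H^{1+\beta_3}$ rather than the naive $t\,d_H$ requires the careful interface regularity analysis — one must integrate by parts far from the edges and vertices of $D_0$ (where solutions of \eqref{conductivity} genuinely lack good regularity), localize to a neighborhood of the smooth part of $\partial D_0$, and absorb the contribution of a shrinking neighborhood of the singular set using the a priori bound $N_0$ on the number of edges and vertices (Remark \ref{rem1}) together with the cone condition of Remark \ref{remcono}. Since this argument is structurally identical to the two-dimensional treatment in \cite{BerFraVes21}, I would state the result and refer to that paper for the routine but lengthy verification, highlighting only the modifications needed to handle three-dimensional edges in addition to vertices.
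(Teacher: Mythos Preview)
Your decomposition of $F'(t,f,g)-F'(0,f,g)$ into coefficient-comparison and gradient-comparison terms is exactly what the paper does (the commented-out proof in the source splits it into $I_1,\dots,I_4$), and your observations $\|\mathcal A\|_{L^\infty}\le Cd_H$ and $\|\mathcal A_t-\mathcal A\|_{L^\infty}\le Ctd_H^2$ are correct and used in the same way.

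Where you diverge is the mechanism producing the extra factor $t^{\beta_3}d_H^{\beta_3}$ in the gradient-comparison terms. The paper does \emph{not} use interface regularity away from edges, unique continuation, or any cutoff near the singular set of $\partial D_0$; those tools belong to the proof of the \emph{lower} bound (Proposition~\ref{prop:lower_bound_F'}), and you appear to be transplanting them here. The actual ingredient is Meyers' higher-integrability theorem: since $u_0,v_0\in W^{1,p}_{loc}(\Omega)$ for some $p>2$ depending only on the ellipticity constants, H\"older's inequality gives
\[
\|\nabla u_0\|_{L^2(D_t\triangle D_0)}\le C\,|D_t\triangle D_0|^{\frac12-\frac1p}\|f\|_{H^{1/2}_{co}(\Sigma)}\le C\,(td_H)^{\frac12-\frac1p}\|f\|_{H^{1/2}_{co}(\Sigma)},
\]
and the energy identity $\|u_t-u_0\|_{H^1(\Omega)}\le C\|\nabla u_0\|_{L^2(D_t\triangle D_0)}$ (obtained by subtracting the weak formulations, as you say) then yields $\|\nabla(u_t-u_0)\|_{L^2(\Omega)}\le C(td_H)^{\beta_3}\|f\|_{H^{1/2}_{co}(\Sigma)}$ with $\beta_3=\tfrac12-\tfrac1p$. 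No geometric analysis of edges or vertices is required at this stage; the polyhedral structure enters only through the uniform Lipschitz bound needed for Meyers' theorem. Your sentence ``the underlying quantitative unique-continuation / regularity estimate near the Lipschitz interface is only H\"older, not Lipschitz'' misdiagnoses the source of the exponent: $\beta_3<1$ comes from the gap between $p$ and $2$ in Meyers' estimate, not from any H\"older interface regularity.
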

\subsection{Lower bound of the derivative} 
We now establish a lower bound for the derivative of $F$ at $t=0$. More precisely, we prove the following
\begin{proposition}\label{prop:lower_bound_F'}
There exists a constant $m_0>0$, depending only on the a priori data such that 
\begin{equation*}
\|F'(0)\|_*\geq m_0 d_H. 
\end{equation*}
where 
\begin{equation*}
    \|F'(0)\|_*=\sup\left\{\frac{|F'(0,f,g)|}{\|f\|_{H^{\frac{1}{2}}_{co}(\Sigma)}\|g\|_{H^{\frac{1}{2}}_{co}(\Sigma)}}\, :\, f,g\neq 0\right\}
\end{equation*}
and $d_H$ is given in \eqref{dH}.
\end{proposition}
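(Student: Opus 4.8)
The plan is to construct explicit boundary data $f,g\in H^{\frac12}_{co}(\Sigma)$ for which the distributed derivative formula of Proposition \ref{shapederivative},
\[
F'(0,f,g)=-\int_\om\gamma_{D_0}\,\mathcal{A}\,\nabla u_0\cdot\nabla v_0\,dx,\qquad \mathcal{A}=\dive\mathcal{U}\,I-(D\mathcal{U}+D\mathcal{U}^T),
\]
can be bounded from below by $m_0 d_H$. The natural candidates are the traces of the singular (Green's) solutions already studied in Section \ref{sec4}: I would take $u_0=\Gs_0(\cdot,y_r)\lfloor_\Sigma$ and $v_0=\Gs_0(\cdot,y_r)\lfloor_\Sigma$ with $y_r=P+r\nu(P)$, where $P\in\partial D_0\cap\partial\Omega_{\mathcal G}$ is the special face-interior point furnished by Proposition \ref{lemmageometrico} (so $\dist(P,\{\sigma^{D_0}_{ij}\})\ge R_1$), and $r$ a fixed fraction of $R_1$. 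By \eqref{eq: estimate_G_boundary} these traces have $H^{\frac12}_{co}(\Sigma)$-norm bounded by a constant depending only on the a priori data, so it suffices to produce a lower bound of the same order $d_H$ for the integral $\bigl|\int_\om\gamma_{D_0}\mathcal{A}\nabla\Gs_0(\cdot,y_r)\cdot\nabla\Gs_0(\cdot,y_r)\,dx\bigr|$.

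The key point is that $\mathcal{A}$ is supported in the tubular neighborhood $\mathcal{W}$ of $\partial D_0$ and is piecewise affine on the triangle collection $\mathcal{T}_0$; near the chosen point $P$ the vector field $\mathcal{U}$ is affine with gradient essentially equal to the affine interpolant carrying $\partial D_0$ to $\partial D_1$, hence on a ball $B_\rho(P)$ (with $\rho\sim R_1$, small enough to avoid edges and the symmetric difference region) we have, by Proposition \ref{prop4}, that $\mathcal{U}$ moves the flat face $S_0\subset\partial D_0$ toward $S_1\subset\partial D_1$ by an amount comparable to $d_H$ in the normal direction $\nu=e_3$. Thus on $B_\rho(P)$ the matrix $\mathcal{A}$ is a \emph{fixed} symmetric matrix of size $\sim d_H$ whose structure is explicitly computable from the affine map (essentially $\mathcal{A}\approx d_H\,\mathcal{A}_0$ with $\mathcal{A}_0$ of unit size, built from $\dive$ and the symmetrized gradient of the normal-shift field). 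Meanwhile, by \eqref{eq:unif_est_green_fund}, in $D_0\cap B_{R_1/2}(P)$ the gradient $\nabla\Gs_0(\cdot,y_r)$ is within a bounded error of the biphase fundamental solution gradient $\tfrac{2}{k+1}\nabla\Gamma(\cdot,y_r)$, which near the interface point blows up like $|x-y_r|^{-2}$; the analogous statement holds in $\om\setminus D_0$ by Green's function estimates. So the integrand on $B_\rho(P)$ behaves, up to a bounded perturbation, like $d_H$ times a fixed quadratic form in $\nabla\Gamma$, integrated against a non-integrable-at-the-point singularity cut off at scale $r$: I would choose $r$ and $\rho$ with $r\le c\rho$ so the dominant contribution comes from the annulus $\{r\le |x-P|\le \rho\}$ and is a strictly positive multiple of $d_H$ (it does not vanish because the quadratic form associated to $\mathcal{A}_0$ paired against the dipole-type profile of $\nabla\Gamma$ has a sign-definite leading term after averaging over the sphere, thanks to the non-degeneracy of the face-shift encoded in \eqref{distPhi} and the dihedral-angle bounds \eqref{angolifacce}). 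The contributions from $\mathcal{W}\setminus B_\rho(P)$, where $\Gs_0$ and $\mathcal{A}$ are both bounded (of size $1$ and $d_H$ respectively) can be absorbed if $\rho$ is chosen a fixed fraction of $R_1$ and $r$ sufficiently small relative to $\rho$, or controlled by splitting $F'$ as in the proof of Proposition 5.7 of \cite{BerFraVes21}.

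More carefully, I would split the domain into $B_\rho(P)\cap D_0$, $B_\rho(P)\setminus D_0$, and $\om\setminus B_\rho(P)$. On the first two pieces substitute the asymptotic $\nabla\Gs_0 = c_\pm\nabla\Gamma(\cdot,y_r) + E$ with $|E|\le C$, expand the quadratic form, and integrate the leading term in polar coordinates centered at $P$ (or at $y_r$): the singular integral $\int \mathcal{A}_0\nabla\Gamma\cdot\nabla\Gamma$ over a half-ball diverges logarithmically or worse at the pole, so cutting at $|x-y_r|\sim r$ gives a term of order $d_H\cdot g(r)$ with $g(r)\to\infty$ as $r\to0$; the cross terms and $|E|^2$ terms are of order $d_H$ times a bounded quantity, and the remainder integral over $\om\setminus B_\rho(P)$ is $O(d_H)$ with a constant depending on $\rho$. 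Fixing $\rho$ first (a fraction of $R_1$) and then $r$ small enough makes the singular main term dominate, yielding $|F'(0,u_0,v_0)|\ge m_0 d_H$ for an $m_0$ depending only on the a priori data, and dividing by the bounded trace norms gives the claim.

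The main obstacle I anticipate is verifying that the leading singular term genuinely has a definite sign — i.e. that $\int_{S^2_+}\mathcal{A}_0\,\omega\!\otimes\!\omega\,:\,(\text{profile of }\nabla\Gamma)$ does not accidentally vanish. This requires exploiting the precise form of $\mathcal{A}_0$: since $\mathcal{U}$ near $P$ shifts the horizontal face $\{x_3=0\}$ roughly to $\{x_3 = a_1 + (l^1-l^0)\cdot x'\}$, the symmetrized-gradient part of $\mathcal{A}_0$ has a distinguished $e_3$-component, and pairing $\dive\mathcal U\, I-(D\mathcal U+D\mathcal U^T)$ with the rank-one tensor $\partial_i\Gamma\,\partial_j\Gamma$ (which itself is dominated by the $\nu$-directional derivative across the interface) produces the integral of $-\,2\,(\partial_3\Gamma)^2\cdot(\text{shift rate in }x_3) + (\text{lower order})$, whose sign is controlled. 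Making this rigorous, while keeping all error terms uniformly bounded in the a priori data and correctly handling the slight mismatch between $P$ (on $\partial\Omega_{\mathcal G}$) and the true point where $d_H$ is realized, is the delicate step; everything else is bookkeeping with the estimates \eqref{eq:unif_est_green_fund}, \eqref{eq: estimate_G_boundary} and Proposition \ref{prop:property_Phi}.
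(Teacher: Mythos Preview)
Your approach has a genuine structural gap: you attempt to extract the full lower bound $m_0 d_H$ from singular solutions placed near a \emph{single} face-interior point $P$. This cannot work in general, because the normal displacement $\mathcal{U}\cdot\nu(P)$ at any one fixed point need not be comparable to $d_H$. Nothing prevents the face containing $P$ from being (nearly) shared by $D_0$ and $D_1$ while $d_H$ is realized on a different face; in that situation $\mathcal{U}\cdot\nu(P)$ is small (or zero) and your singular main term collapses. Proposition~\ref{prop4} and \eqref{distPhi} only guarantee a normal shift of order $d_H^{\,3}$ at the special point $P_0$, not $d_H$, so even in the best case your argument would produce $\|F'(0)\|_*\gtrsim d_H^{\,3}$, which is too weak. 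Moreover, the point $P$ furnished by Proposition~\ref{lemmageometrico} is \emph{far} from edges and hence typically outside the triangle collection $\mathcal T_0$, so $\mathcal U$ is not the piecewise-affine interpolant there but only its Lipschitz extension, and your explicit description of $\mathcal A_0$ near $P$ is not available.

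The paper's proof is organized quite differently. It normalizes $\widetilde{\mathcal U}=\mathcal U/|W|$, sets $m_1=\|F'(0)\|_*/|W|$, and argues by propagation: for $y,z$ in the augmented exterior domain $\om_0$ one has $|\Theta(y,z)|\le C m_1$, and by three-spheres this smallness is pushed along tubular neighborhoods into the vicinity of the incenter of \emph{every} triangle $T\subset\mathcal T_0$ on \emph{every} face. A crucial integration-by-parts step (performed away from a tubular neighborhood $\mathcal B$ of the edges) converts the distributed integral into the boundary expression $\int_{\partial D_0\setminus\mathcal B}(\widetilde{\mathcal U}\cdot\nu)(k-1)\,\mathcal M\nabla u_0^i\cdot\nabla v_0^i\,d\sigma$, where the polarization tensor $\mathcal M$ is positive definite; this is what makes the leading singular term $|\widetilde{\mathcal U}\cdot\nu(P)|/r^2$ have a definite sign, bypassing exactly the ``delicate step'' you flagged. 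Comparing upper and lower bounds for $\Theta(y_r,y_r)$ then yields $|\widetilde{\mathcal U}\cdot\nu_j(V)|\le\omega_0(m_1)$ at every vertex $V$ and for every adjacent face normal $\nu_j$. Since at each vertex at least three such normals are linearly independent (by the a~priori angle bounds), this forces $\max_i|\widetilde{\mathcal U}(V_i^{D_0})|\le C\omega_0(m_1)$, contradicting the normalization $\max_i|\widetilde{\mathcal U}(V_i^{D_0})|\ge 1/N$ unless $m_1$ is bounded below. The argument is global over all faces and indirect; a direct single-point computation of the type you sketch does not suffice.
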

Before proving the lower bound, we state the following lemma which is a special case of Proposition 1.6 in \cite{LiNir03}.
\begin{lemma}
Let $B_r$ be a ball of radius $r>0$ centered at the origin, and let $B^{\pm}_{r}$ be the upper and the lower half ball and let $\gamma_1,\ \gamma_2$ be two positive constants. Let $v\in H^1(B_r)$ be a solution to
\begin{equation}\label{eq:equation_lemma_Li_Nir}
\dive\left( (\gamma_1+(\gamma_2-\gamma_1)\chi_{B^+_r})\nabla v\right)=0,\qquad \textrm{in}\ B_r.
\end{equation}
Then $v\in C^{\infty}(\overline{B}^{\pm}_r)$ and for all $\delta>0$ there exists a constant depending only on $\gamma_1,\ \gamma_2$ and $\delta$ such that
\begin{equation}\label{eq:estimate_Li_Nir}
\|\nabla v\|_{L^{\infty}(B_{(1-\delta)r})}\leq C \|v\|_{L^2(B_r)}.
\end{equation}
\end{lemma}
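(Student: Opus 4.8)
The plan is to use the flatness of the interface to turn this transmission problem into an ordinary interior regularity statement for harmonic functions via an even/odd reflection. First I would normalise by scaling: since the coefficient $\gamma_1+(\gamma_2-\gamma_1)\chi_{B_r^+}$ is homogeneous of degree zero, $w(x)=v(rx)$ solves the same equation on $B_1$ with the same constants $\gamma_1,\gamma_2$, and tracking the $L^2$ and gradient norms reduces everything to $r=1$ (the constant for general $r$ then carries the appropriate power of $r$, which is harmless since in the applications $r$ is a fixed fraction of $r_0$). So assume $r=1$. Writing the weak formulation and using that the coefficient is piecewise constant, $v$ is harmonic in $B_1^+$ and in $B_1^-$ separately, and across the flat part $\{x_3=0\}\cap B_1$ it satisfies, in the sense of traces, the two transmission conditions: continuity of $v$ (because $v\in H^1$) and continuity of the conormal flux, $\gamma_2\,\partial_{x_3}v|_{x_3=0^+}=\gamma_1\,\partial_{x_3}v|_{x_3=0^-}$.

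Next I would symmetrise. Set $w^+(x)=v(x)$ for $x\in B_1^+$ and $w^-(x',x_3)=v(x',-x_3)$ for $x\in B_1^+$; both are harmonic in $B_1^+$, and on $\{x_3=0\}$ one has $w^+=w^-$ and $\gamma_2\partial_{x_3}w^+=-\gamma_1\partial_{x_3}w^-$. Hence $p:=w^+-w^-$ is harmonic in $B_1^+$ and vanishes on $\{x_3=0\}$, so its odd reflection $\widetilde p$ is (weakly, hence classically) harmonic in all of $B_1$; likewise $q:=\gamma_2 w^++\gamma_1 w^-$ is harmonic in $B_1^+$ with $\partial_{x_3}q=0$ on $\{x_3=0\}$, so its even reflection $\widetilde q$ is harmonic in all of $B_1$. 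Since $\gamma_1+\gamma_2>0$ we may invert, $w^+=(\widetilde q+\gamma_1\widetilde p)/(\gamma_1+\gamma_2)$ and $w^-=(\widetilde q-\gamma_2\widetilde p)/(\gamma_1+\gamma_2)$ on $B_1^+$, so $w^\pm$, and therefore $v$ restricted to each half, extend real-analytically across $\{x_3=0\}$. In particular $v\in C^\infty(\overline{B_\rho^\pm})$ for every $\rho<1$, which is the first assertion.

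For the quantitative bound I would simply feed this through the standard interior estimate for harmonic functions: $\|\nabla\widetilde p\|_{L^\infty(B_{1-\delta})}\le C_\delta\|\widetilde p\|_{L^2(B_1)}$ and likewise for $\widetilde q$, while $\|\widetilde p\|_{L^2(B_1)},\|\widetilde q\|_{L^2(B_1)}\le C(\gamma_1,\gamma_2)\|v\|_{L^2(B_1)}$, since the reflections only double the $L^2$ mass and $p,q$ are fixed linear combinations of $w^\pm$, whose $L^2$ norms on $B_1^+$ equal those of $v$ on the two halves of $B_1$. Combining these with the inversion formula above gives $\|\nabla v\|_{L^\infty(B_{1-\delta}^\pm)}\le C(\gamma_1,\gamma_2,\delta)\|v\|_{L^2(B_1)}$, and taking the essential maximum over the two halves yields $\|\nabla v\|_{L^\infty(B_{1-\delta})}\le C(\gamma_1,\gamma_2,\delta)\|v\|_{L^2(B_1)}$; undoing the scaling gives the estimate for general $r$.

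I expect the only delicate point to be the verification that the reflected functions $\widetilde p$ and $\widetilde q$ are genuinely weakly harmonic across $\{x_3=0\}$, and this is precisely where the hypotheses enter: the $H^1$ membership supplies continuity of the trace of $v$ (hence $p=0$ on the interface after reflection stays in $H^1$), the divergence-form equation with $\gamma_i>0$ supplies continuity of the conormal flux (hence $\partial_{x_3}q=0$, so the even reflection of $q$ has matching normal derivatives), and the flatness of the interface is exactly what makes the odd and even reflections preserve harmonicity. Once one checks, by testing the weak formulation against functions supported across $\{x_3=0\}$, that $\widetilde p,\widetilde q\in H^1(B_1)$ solve $\Delta(\,\cdot\,)=0$ weakly, Weyl's lemma concludes, and everything else is routine.
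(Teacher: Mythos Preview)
Your argument is correct. The paper itself does not prove this lemma; it simply states that it is a special case of Proposition~1.6 in Li--Nirenberg \cite{LiNir03}, which handles general second-order elliptic systems with coefficients that are smooth on each side of a $C^{1,\alpha}$ interface. Your reflection argument is a genuinely different, and in this setting more elementary, route: because the interface is flat and the coefficients are constant on each side, the odd/even reflection of the combinations $p=w^+-w^-$ and $q=\gamma_2 w^++\gamma_1 w^-$ are globally harmonic, and the result follows from the standard interior estimate for harmonic functions rather than from the full Li--Nirenberg machinery. What the citation buys is generality (curved interfaces, variable coefficients, systems); what your approach buys is self-containment and transparency about the constants.

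One small remark on the scaling step: the inequality as literally written in the lemma, with $C$ depending only on $\gamma_1,\gamma_2,\delta$, is not scale-invariant (dimensional analysis forces a factor of $r^{-5/2}$), so strictly speaking $C$ must also depend on $r$. You flagged this yourself, and you are right that it is harmless in the applications in Section~\ref{sec5}, where the radii are fixed fractions of $r_0$ and the $r$-dependence is absorbed into the a~priori constants.
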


\begin{proof}[Proof of Proposition \ref{prop:lower_bound_F'}]
We set
\begin{equation*}
    W=(V^{D_0}_{1}-V^{D_1}_{1}, V^{D_0}_{2}-V^{D_1}_{2},\ldots, V^{D_0}_{N}-V^{D_1}_{N}).
\end{equation*}
By Proposition \ref{distvert} we have that 
\begin{equation}\label{eq:estimate_length_W}
C^{-1}d_H\leq |W|\leq C d_H,
\end{equation}
where $C$ depends on the a priori data. We normalize by the length $|W|$ of the vector $W$ by setting
\begin{equation*}
    \widetilde{\mathcal{U}}=\frac{\mathcal{U}}{|W|},\qquad \widetilde{\mathcal{A}}=\frac{\mathcal{A}}{|W|},
\end{equation*}
and 
\begin{equation*}
H(f,g):=-\int_{\om}\gamma_{D_0}\widetilde{\mathcal{A}}\nabla u_0\cdot \nabla v_0\, dx,
\end{equation*}
so that $F'(0,f,g)=|W| H(f,g)$. Let $m_1=\|H\|_*$ the operator norm so that
\begin{equation*}
|H(f,g)|\leq m_1 \|f\|_{H^{\frac{1}{2}}_{co}(\Sigma)} \|g\|_{H^{\frac{1}{2}}_{co}(\Sigma)}, \qquad \forall f,g\in H^{\frac{1}{2}}_{co}(\Sigma).
\end{equation*}
In particular, we have 
\begin{equation}\label{eq:norm_F'}
    \|F'(0)\|_*=|W|\|H\|_*.
\end{equation}
We divide the proof in three main steps.\\

\textit{Step 1.} To start with, we choose special boundary values $f^{\sharp},\ g^{\sharp}$ by setting for $y,z\in \oms\setminus\overline{\om}$,
\begin{equation*}
f^{\sharp}(\cdot)=\Gs_0(\cdot,y)\lfloor_{\partial\om},\qquad g^{\sharp}(\cdot)=\Gs_0(\cdot,z)\lfloor_{\partial\om}
\end{equation*}
where $\Gs_0(\cdot,y),\ \Gs_0(\cdot,z)$ are the Green's functions defined in \eqref{eq:equ_green_func} with conductivity $\gamma_{D_0}$ and singularity at $y$ and $z$ respectively.  
With these choices, we consider the corresponding solutions $\Gs_0(x,y)$ and $\Gs_0(x,z)$ that we will still denote by $u_0$ and $v_0$ for the sake of brevity. Since $y,z\in \om_0\setminus\overline{\om}$, $u_0,\ v_0\in H^1(\om)$ and we can define 
\begin{equation*}
    \Theta(y,z):=-\int_{\om}\gamma_{D_0}\widetilde{\mathcal{A}}\nabla u_0 \cdot \nabla v_0\, dx=-\int_{\om}\gamma_{D_0}\widetilde{\mathcal{A}}\nabla\Gs_0(\cdot,y)\cdot \nabla \Gs_0(\cdot,z)\, dx.
\end{equation*}
First, observe that for $y,z\in B_{r_1}(P_0)\subset \om_0$, see \eqref{eq:ball},
\begin{equation*}
    \Theta(y,z)=H(\fs,\gs),
\end{equation*}
hence
\begin{equation*}
    |\Theta(y,z)|\leq m_1 \|\fs\|_{H^{\frac{1}{2}}_{co}(\Sigma)} \|\gs\|_{H^{\frac{1}{2}}_{co}(\Sigma)}.
\end{equation*}
From \eqref{eq: estimate_G_boundary}, we have that $\|\fs\|_{H^{\frac{1}{2}}_{co}(\Sigma)}, \|\gs\|_{H^{\frac{1}{2}}_{co}(\Sigma)}\leq C r_1^{-\frac{1}{2}}$. Hence,
\begin{equation}\label{eq:estimate_T_calligraphic}
    |\Theta(y,z)|\leq \frac{C m_1}{r_1},\qquad \forall y,z\in B_{r_1}(P_0).
\end{equation}
\textit{Step 2.} As second step, we use the properties of $\Theta(y,z)$ to go from the distributed formula to the boundary formula on $\partial D_0$, far from vertices and edges. To this purpose, we consider a tubular neighborhood of the edges $\{\sigma_{ij}^{D_0}\}$, with $i\neq j$, that is
\begin{equation*}
    \mathcal{B}=\bigcup_{\substack{i,j \\ i\neq j}}\bigg\{x\in\om \ : \ dist(x,\sigma_{ij}^{D_0})\leq \frac{r_0}{c_1}   \bigg\},
\end{equation*}
where $c_1>1$ depends only on the a priori data, so that 
\begin{align}
    &(\om\setminus D_0)\setminus\mathcal{B}\ \textrm{is connected};\\
    & dist(\mathcal{B},\partial\om)\geq \frac{r_0}{2}.
\end{align}
Let us write
\begin{equation*}
    \Theta(y,z)=-\int_{\om\setminus\mathcal{B}}\gamma_{D_0}\widetilde{\mathcal{A}}\nabla u_0\cdot\nabla v_0\, dx - \int_{\mathcal{B}}\gamma_{D_0}\widetilde{\mathcal{A}}\nabla u_0\cdot\nabla v_0\, dx. 
\end{equation*}
 In $\om\setminus D_0$ and in $D_0$ we have that $\widetilde{\mathcal{A}}\nabla u_0\cdot \nabla v_0=-\dive(b)$,
where 
\begin{equation*}
    b=\left(\widetilde{\mathcal{U}}\cdot \nabla u_0\right)\nabla v_0 + \left(\widetilde{\mathcal{U}}\cdot \nabla v_0\right)\nabla u_0 - \left(\nabla u_0\cdot \nabla v_0\right)\widetilde{\mathcal{U}}.
\end{equation*}
Then, we can write
\begin{equation*}
    \int_{\om\setminus\mathcal{B}}\gamma_{D_0}\widetilde{\mathcal{A}}\nabla u_0\cdot\nabla v_0\, dx=-\int_{\om\setminus(D_0\cup \mathcal{B})}\dive(b^e)\, dx- k\int_{D_0\cup \mathcal{B}}\dive(b^i)\, dx,
\end{equation*}
where we have set $b^e=b\big\lfloor_{\om\setminus D_0}$ and $b^i=b\big\lfloor_{D_0}$.
Let us now integrate by parts and denote by $\nu$ the outward unit normal vector to $\mathcal{B}$ and to $D_0$. Observing that by construction,
$\textrm{supp}\ \widetilde{\mathcal{U}}\subset \mathcal{W}$, it follows that $b=0$ on $\partial\om$. Hence, 
\begin{equation}\label{eq:div_b_ext}
    \int_{\om\setminus(D_0\cup\mathcal{B})}\dive(b^e)\, dx=-\int_{\partial\mathcal{B}\cap(\om\setminus D_0)}b^e\cdot \nu\, d\sigma(x)-\int_{\partial D_0\cap(\om\setminus\mathcal{B})}b^e\cdot \nu\, d\sigma(x),
\end{equation}
and
\begin{equation}\label{eq:div_b_in}
    \int_{D_0\cup\mathcal{B}}\dive(b^i)\, dx=\int_{\partial D_0\setminus \mathcal{B}}b^i\cdot \nu\, d\sigma(x)-\int_{\partial \mathcal{B}\cap D_0}b^i\cdot \nu\, d\sigma(x).
\end{equation}
Then by \eqref{eq:div_b_ext} and \eqref{eq:div_b_in}, it follows
\begin{equation*}
    \int_{\om\setminus\mathcal{B}}\gamma_{D_0}\widetilde{\mathcal{A}}\nabla u_0\cdot \nabla v_0\, dx=\int_{\partial\mathcal{B}}\gamma_{D_0}b\cdot \nu\, d\sigma(x) -\int_{\partial D_0\setminus \mathcal{B}}[\gamma_{D_0}b\cdot \nu]\, d\sigma(x),
\end{equation*}
where $[\cdot]$ denotes the jump along the surface $\partial D_0$. By the transmission conditions satisfied by $u_0$ and $v_0$ across $\partial D_0$ and the fact that $\widetilde{\mathcal{U}}\in W^{1,\infty}(\mathbb{R}^3)$ on $\partial D_0\setminus \mathcal{B}$, we can write
\begin{equation*}
    [\gamma_{D_0}b\cdot \nu]=\widetilde{\mathcal{U}}\cdot \nu (k-1)\mathcal{M}\nabla u_0^i\cdot \nabla v_0^i,
\end{equation*}
where $\mathcal{M}$ is the so-called polarization tensor, i.e., a $3\times3$ matrix with eigenvectors $\nu$ and $\nu^{\perp}$ and with eigenvalues $k$ and $1$. Hence, we can rewrite \eqref{eq:estimate_T_calligraphic} in the form
\begin{equation}\label{eq:T_calligraphic_v2}
    \begin{aligned}
    \Theta(y,z)=&-\int_{\mathcal{B}}\gamma_{D_0}\widetilde{\mathcal{A}}\nabla u_0\cdot \nabla v_0\, dx - \int_{\partial\mathcal{B}}\gamma_{D_0}b\cdot \nu\,dx\\ 
    &+\int_{\partial D_0\setminus \mathcal{B}}\widetilde{\mathcal{U}}\cdot \nu (k-1) \mathcal{M}\nabla u_0^i \cdot \nabla v_0^i\, dx.
    \end{aligned}
\end{equation}
\textit{Step 3.} We now use the properties of the function $\Theta$ to propagate the estimate \eqref{eq:estimate_T_calligraphic} up to points that are close to the faces of $D_0$ but far from vertices and edges.\\
From formula \eqref{eq:T_calligraphic_v2}, $\Theta(y,z)$ is well defined for $(y,z)\in\oms\setminus(D_0\cup\mathcal{B})$ and recalling that $u_0(\cdot)=\Gs_0(\cdot,y)$, $v_0(\cdot)=\Gs(\cdot,z)$, we have 
\begin{equation*}
    \dive(\gamma_{D_0}\nabla\Theta)=0,\qquad \textrm{in}\ \ \om\setminus(D_0\cup\mathcal{B})
\end{equation*}
both with respect to $y$ and $z$, i.e.
\begin{equation*}
\dive_y(\gamma_{D_0}\nabla_y\Theta(\cdot,z))=0,\quad \textrm{and}\quad \dive_z(\gamma_{D_0}\nabla_z\Theta(y,\cdot))=0, \quad \textrm{in}\ \om\setminus(D_0\cup\mathcal{B}).   
\end{equation*}
Let us now consider an arbitrary face $F_j$ of $D_0$ and let us choose $P\in F_j\cap T$ as the incenter of a triangle $T$ of $F_j\cap\mathcal{T}_0$, where $T\in \mathcal{T}_0$ and  $\mathcal{T}_0$ is the partition of triangles  defined at the beginning of the section. Consider then a ball centered at $P$ and radius 
$\frac{r_0}{2C_1}$ with $C_1=\max\left(c_1, \frac{1+\sqrt{1+\tan^2(\theta_0/2)}}{\min(1,\tan(\theta_0/2))}\right)$. 
Then by the a priori assumptions on $D_0$, $B_{\frac{r_0}{2C_1}}(P)$ is  such that it intersects $\partial D_0$ only on the face $F_j$,  $B_{\frac{r_0}{2C_1}}(P)\cap F_j$ is striclty contained in $T$, and $dist(P,\mathcal{B})\geq \frac{r_0}{2C_1}$.\\ 
Let $\widetilde{\mathfrak{c}}$ be a simple curve adjoining $P+\frac{r_0}{2C_1}\nu(P)$ with the point $\widetilde{P}_0\in B_{\frac{r_0}{2C_1}}(P_0)\subset \om_0$ such that $\widetilde{\mathfrak{c}}\subset \oms\setminus D_0$ and $dist(\widetilde{\mathfrak{c}}, D_0)\leq \frac{r_0}{2C_1}$. Let 
\begin{equation*}
    \widetilde{\mathfrak{c}}'=\widetilde{\mathfrak{c}}\cup\left\{P+t\nu(P),\ t\in\left[0,\frac{r_0}{2C_1}\right]\right\}
\end{equation*}
and
\begin{equation*}
\begin{aligned}
    \mathcal{K}&=\left\{x\in(\oms\setminus D_0)\ :\ dist(x,\widetilde{\mathfrak{c}}')< \frac{r_0}{4C_1}\right\}\\
  \mathcal{K}'&=\left\{x\in(\oms\setminus D_0)\ :\ dist(x,\widetilde{\mathfrak{c}}')< \frac{r_0}{8C_1}\right\}.
\end{aligned}
\end{equation*}
Then the function $\Theta$ solves in $\mathcal{K}$ the equations 
\begin{equation*}
\dive_y(\gamma_{D_0}\nabla_y\Theta(\cdot,z))=0,\quad \textrm{and}\quad \dive_z(\gamma_{D_0}\nabla_z\Theta(y,\cdot))=0.
\end{equation*}
Let us start to estimate $\Theta(y,z)$ for $y,z\in \mathcal{K}$ using \eqref{eq:T_calligraphic_v2}.
Since $dist(\mathcal{K},\mathcal{B})\geq \frac{r_0}{4C_1}$, we have that
\begin{equation*}
    \|\nabla u_0\|_{L^2(\mathcal{B})}\leq \|\Gs_0(\cdot,y)\|_{L^2\left(\oms\setminus B_{\frac{r_0}{4C_1}}(y)\right)}\leq C,
\end{equation*}
and analogously
\begin{equation*}
    \|\nabla v_0\|_{L^2(\mathcal{B})}\leq \|\Gs_0(\cdot,z)\|_{L^2\left(\oms\setminus B_{\frac{r_0}{4C_1}}(z)\right)}\leq C,
\end{equation*}
Hence, we have that
\begin{equation}\label{eq:equation1}
    \bigg|\int_{\mathcal{B}}\gamma_{D_0}\nabla u_0\cdot\nabla v_0\, dx\bigg|\leq C\|\nabla u_0\|_{L^2(\mathcal{B})}\|\nabla v_0\|_{L^2(\mathcal{B})}\leq C.
\end{equation}
Let us now estimate the second integral on the right-hand side of \eqref{eq:T_calligraphic_v2}. For, we consider a neighborhood of $\partial\mathcal{B}$, that is
\begin{equation}
    \mathfrak{M}=\left\{x\ :\ dist(x,\partial\mathcal{B})\leq \frac{r_0}{8C_1}\right\}
\end{equation}
and
\begin{equation}
    \mathfrak{M}'=\left\{x\ :\ dist(x,\partial\mathcal{B})\leq \frac{r_0}{16C_1}\right\}.
\end{equation}
Since $u_0$ and $v_0$ are variational solutions of equation \eqref{eq:equation_lemma_Li_Nir} in $\mathfrak{M}$, we apply the estimate \eqref{eq:estimate_Li_Nir}, getting
\begin{equation*}
    \|\nabla u_0\|_{L^{\infty}(\mathfrak{M}')},\ \|\nabla v_0\|_{L^{\infty}(\mathfrak{M}')}\leq C,
\end{equation*}
where $C$ depends only on the a priori constants. Hence,
\begin{equation}\label{eq:equation2}
    \bigg|\int_{\partial\mathcal{B}}\gamma_{D_0}b\cdot  \nu\, d\sigma(x)\bigg|\leq C.
\end{equation}
For a similar reason, for points on $\partial D_0\setminus\left(\mathcal{B}\cup B_{\frac{r_0}{2C_1}}(P)\right)$, we can bound
\begin{equation}\label{eq:equation3}
    \bigg|\int_{\partial D_0\setminus\left(\mathcal{B}\cup B_{\frac{r_0}{2C_1}}(P)\right)}\widetilde{\mathcal{U}}\cdot \nu (k-1)\mathcal{M}\nabla u_0^i\cdot\nabla v_0^i\, dx\bigg|\leq C
\end{equation}
since $y,z\in \mathcal{K}$.
Finally, let us bound 
\begin{equation}\label{eq:equation4}
    \int_{\partial D_0\cap B_{\frac{r_0}{2C_1}}(P)}\widetilde{\mathcal{U}}\cdot \nu (k-1)\mathcal{M}\nabla u_0^i\cdot\nabla v_0^i\, dx.
\end{equation}
Notice that if $y,z$ are at positive fixed distance from $\partial D_0\cap B_{\frac{r_0}{2C_1}}(P)$, then we can use again \eqref{eq:estimate_Li_Nir} to estimate \eqref{eq:equation4}. On the other hand, for points $y,z$ close to $\partial D_0\cap B_{\frac{r_0}{2C_1}}(P)$, we can use \eqref{eq:estimate_green_fundamental_solution} and the explicit formula of the fundamental solution to get 
\begin{equation}\label{eq:equation5}
   \bigg| \int_{\partial D_0\cap B_{\frac{r_0}{2C_1}}(P)}\widetilde{\mathcal{U}}\cdot \nu (k-1)\mathcal{M}\nabla u_0^i\cdot\nabla v_0^i\, dx\bigg|\leq C (\overline{d}_y \overline{d}_z)^{-1},
\end{equation}
where $\overline{d}_y=dist(y, D_0)$, $\overline{d}_z=dist(z,D_0)$. Collecting all previous estimates \eqref{eq:equation1}, \eqref{eq:equation2}, \eqref{eq:equation3} and \eqref{eq:equation4} we end up with the following bound
\begin{equation}\label{eq:estimate_T_distance}
    |\Theta(y,z)|\leq C (\overline{d}_y \overline{d}_z)^{-1},\qquad \forall y,z\in \mathcal{K}.
\end{equation}
Let us set consider the following subsets of the walkway $\mathcal{K}$
\begin{equation*}
    \begin{aligned}
    \Kd&=\left\{x\in \mathcal{K}\ :\ dist(x, D_0)\geq \frac{r_0}{32 C_1} \right\}\\
    \Kd_0&=\left\{x\in \mathcal{K}\ :\ dist(x, D_0)\geq \frac{r_0}{16 C_1} \right\}.
    \end{aligned}
\end{equation*}
Then by \eqref{eq:estimate_T_distance} and the definition of $\Kd$, the following bound holds
\begin{equation*}
    |\Theta(y,z)|\leq C,\qquad \forall(y,z)\in\Kd.
\end{equation*}
Hence, thanks to \eqref{eq:estimate_T_calligraphic}, proceeding as in \cite[Theorem 5.1]{ARRV}, we can show that 
\begin{equation*}
    \|\Theta(\cdot,z)\|_{L^{\infty}(B_{R_1}(\overline{Q}))}\leq C m_1^{\delta},\qquad \forall z\in\om_0
\end{equation*}
where $\overline{Q}=P+\frac{r_0}{4C_1}\nu(P)$, $R_1=\frac{r_0}{8C_1}$ and $\delta\in(0,1)$. Similarly, we derive
\begin{equation}\label{eq:uniform_estimate_T}
    \|\Theta(y,\cdot)\|_{L^{\infty}(B_{R_1}(\overline{Q}))}\leq C m_1^{\delta^2},\qquad \forall y\in B_{R_1}(\overline{Q}).
\end{equation}

We now apply the three spheres inequality for harmonic functions to $\Theta(\cdot,z)$ in the balls
\begin{equation*}
    B_{\overline{R}_1}(\overline{Q})\subset B_{\overline{R}_2}(\overline{Q})\subset B_{\overline{R}_3}(\overline{Q}),
\end{equation*}
for 
\begin{equation*}
    \overline{R}_1=\frac{R_1}{2},\quad \overline{R}_2=\frac{r_0}{4C_1}-\frac{r}{2},\quad \overline{R}_3=\frac{r_0}{4C_1}-\frac{r}{4},
\end{equation*}
with $r$ to be chosen. We have for $z\in B_{\overline{R}_1}(\overline{Q})$ and $\vartheta_r=\frac{\log\left( \frac{\overline{R}_3}{\overline{R}_2}\right)}{\log\left( \frac{\overline{R}_3}{\overline{R}_1}\right)}$ that
\begin{equation*}
    \|\Theta(\cdot,z)\|_{L^{\infty}\left(B_{\overline{R}_2}(\overline{Q})\right)}\leq \|\Theta(\cdot,z)\|_{L^{\infty}\left(B_{\overline{R}_1}(\overline{Q})\right)}^{\vartheta_r} \|\Theta(\cdot,z)\|_{L^{\infty}\left(B_{\overline{R}_3}(\overline{Q})\right)}^{1-\vartheta_r},
\end{equation*}
and from \eqref{eq:uniform_estimate_T} and \eqref{eq:estimate_T_distance} we find
\begin{equation*}
    \|\Theta(\cdot,z)\|_{L^{\infty}\left(B_{\overline{R}_2}(\overline{Q})\right)}\leq \left(\frac{1}{r}\right)^{1-\vartheta_r}m_2^{\vartheta_r}
\end{equation*}
where 
\begin{equation}\label{eq:aux}
m_2=C m_1^{\delta^2}.    
\end{equation}
Hence
\begin{equation*}
    |\Theta(y_r,z)|\leq C \left(\frac{1}{r}\right)^{1-\vartheta_r}m_2^{\vartheta_r} \leq \frac{m_2^{\vartheta_r}}{r},\qquad \forall z\in B_{\overline{R}_1(\overline{Q})}.
\end{equation*}
We now consider $\Theta(y_r,\cdot)$ in the same disks getting 
\begin{equation*}
    |\Theta(y_r,y_r)|\leq C \left(\frac{1}{r^2}\right)^{1-\vartheta_r}\left(\frac{1}{r}\right)^{\vartheta_r}m_2^{\vartheta_r^2} \leq \frac{m_2^{\vartheta_r^2}}{r^2}.
\end{equation*}
Hence
\begin{equation}\label{eq:estimate_above_T_yr}
    |\Theta(y_r,y_r)|\leq C\frac{m_2^{\vartheta_r^2}}{r^2}.
\end{equation}
\textit{Step 4.} We now want to estimate $\Theta(y_r,y_r)$ from below. We start from 
\begin{equation*}
\begin{aligned}
    \Theta(y_r,y_r)=&-\int_{\mathcal{B}}\gamma_{D_0}\widetilde{\mathcal{A}}\nabla u_0\cdot \nabla v_0\, dx-\int_{\partial\mathcal{B}}\gamma_{D_0}b\cdot \nu\, d\sigma(x)\\
    &+\int_{\partial D_0 \setminus\left(\mathcal{B}\cup B_{\frac{r_0}{2C_1}}(P) \right)}\widetilde{\mathcal{U}}\cdot \nu (k-1) \mathcal{M}\nabla u_0^i \cdot \nabla v_0^i\, d\sigma(x)\\
    &+\int_{\partial D_0 \cap B_{\frac{r_0}{2C_1}}(P)}\widetilde{\mathcal{U}}\cdot \nu (k-1) \mathcal{M}\nabla u_0^i \cdot \nabla v_0^i\, d\sigma(x):=I_1+I_2+I_3+I_4.
\end{aligned}
\end{equation*}
From estimates \eqref{eq:equation1}, \eqref{eq:equation2}, and \eqref{eq:equation3} we get
\begin{equation*}
    |I_i|\leq C,\qquad i=1,2,3,
\end{equation*}
where $C$ depends only on the a priori data. To evaluate $I_4$ from below, we use \eqref{eq:estimate_green_fundamental_solution}
and add and subtract $(\widetilde{\mathcal{U}}\cdot \nu)(P)$ in the integral. A straightforward computation then give for $r\leq \frac{r_0}{16C_1}$, 
\begin{equation*}
    |I_4|\geq C \frac{|(\widetilde{\mathcal{U}}\cdot \nu)(P)|}{r^2}-\frac{C}{r}.
\end{equation*}
Hence,
\begin{equation*}
    |\Theta(y_r,y_r)|\geq C \frac{|(\widetilde{\mathcal{U}}\cdot \nu)(P)|}{r^2}-\frac{C}{r},
\end{equation*}
and by \eqref{eq:estimate_above_T_yr} we finally get
\begin{equation*}
    |(\widetilde{\mathcal{U}}\cdot \nu)(P)|\leq C(m_2^{\vartheta_r^2}+r).
\end{equation*}
If $m_2\leq e^{-(16)^4}$, i.e. (see \eqref{eq:aux})
\begin{equation}\label{eq:estimate_m1}
    m_1\leq \left(\frac{e^{-(16)^4}}{C}\right)^{\frac{1}{\delta^2}}=\frac{1}{C_1}
\end{equation}
where $C_1$ depends only on the a priori data, we can pick up
\begin{equation*}
    r=\overline{r}=\frac{r_0}{C_1}|\log m_2|^{-\frac{1}{4}}
\end{equation*}
getting 
\begin{equation*}
    |(\widetilde{\mathcal{U}}\cdot \nu)(P)|\leq C |\log m_2|^{-\frac{1}{4}}
\end{equation*}
and recalling the definition of $m_2$, we find
\begin{equation}\label{eq:estimate1b}
    |(\widetilde{\mathcal{U}}\cdot \nu)(P)|\leq C \omega_0(m_1)
\end{equation}
where $\omega_0(t)$ is an increasing concave function such that $\lim\limits_{t\to 0} \omega_0(t)=0$. Note that with a similar procedure the estimate \eqref{eq:estimate1b} can be obtained for each point in a neighborhood of $P$ in the triangle $T$ containing $P$. Since $\widetilde{\mathcal{U}}$ is affine on the triangle $T$  the estimate holds also on the corresponding edge ${\sigma_{ij}^{D_0}}$ and at the adjoining vertices. We can repeat this argument for each side of the face $F_j$. Hence, if $\{V^{D_0}_{ij}\}_i$, for $1\leq i\leq N_j$, indicate the vertices on the face $F_j$ of $D_0$
\begin{equation*}
    |\widetilde{\mathcal{U}}(V^{D_0}_{ij})\cdot \nu_j|\leq \omega_0(m_1),\qquad \textrm{for all}\ 1\leq i\leq N_j
\end{equation*}
where $\nu_j$ is the unit outward normal to the face $F_j$. In particular, recalling the definition of $\widetilde{\mathcal{U}}$ on $\partial D_0$, we get
\begin{equation*}
    \bigg|\frac{(V^{D_0}_{ij}-V^{D_1}_{ij})\cdot \nu_j}{|W|}\bigg|\leq \omega_0(m_1),\qquad \textrm{for all}\ 1\leq i\leq N_j.
\end{equation*}
We can repeat this on any face $F_j$ so that
\begin{equation}\label{eq:estimate_on_vertices}
    \bigg| \frac{(V^{D_0}_{ij}-V^{D_1}_{ij})\cdot \nu_j}{|W|}\bigg| \leq \omega_0(m_1),\qquad \textrm{for all}\ 1\leq i\leq N_j,\ j\in \{1,\ldots,H\},
\end{equation}
and $\nu_j$ normal to the face $F_j$. Let
\begin{equation*}
    |V^{D_0}_{i_0j_0}-V^{D_1}_{i_0j_0}|=\max\limits_{i,j}|V^{D_0}_{ij}-V^{D_1}_{ij}|.
\end{equation*}
Then
\begin{equation*}
    \frac{|V^{D_0}_{i_0j_0}-V^{D_1}_{i_0j_0}|}{|W|}\geq \frac{1}{N},
\end{equation*}
where $N$ is the total number of vertices of $D_0$ and $D_1$, see Proposition \ref{distvert}. Moreover, since, for the a priori information, there are three linearly independent unit directions $\nu$ for which \eqref{eq:estimate_on_vertices} holds for $i=i_0$ and $j=j_0$ then it holds for every unit direction, in particular by choosing $\overline{\nu}$ parallel to $V^{D_0}_{i_0j_0}-V^{D_1}_{i_0j_0}$, we get
\begin{equation*}
    \frac{1}{N}\leq \frac{|V^{D_0}_{i_0j_0}-V^{D_1}_{i_0j_0}|}{|W|}=\frac{|(V^{D_0}_{i_0j_0}-V^{D_1}_{i_0j_0})\cdot\overline{\nu}|}{|W|}\leq \omega_0(m_1),
\end{equation*}
which gives 
\begin{equation*}
    m_1\geq \omega_0^{-1}\left(\frac{1}{N}\right),
\end{equation*}
and recalling \eqref{eq:estimate_m1} we have
\begin{equation*}
    m_1\geq \min\left(\omega_0^{-1}\left(\frac{1}{N}\right),\frac{1}{C_1}  \right).
\end{equation*}
Finally, from the estimate \eqref{eq:estimate_length_W} and \eqref{eq:norm_F'}, we get
\begin{equation*}
    \|F'(0)\|\geq m_0 d_H,
\end{equation*}
with $m_0=C^{-1}\min\left(\omega_0^{-1}\left(\frac{1}{N}\right),\frac{1}{C_1}  \right)$.
\end{proof}

\section{Lipschitz stability: proof of Theorem \ref{mainteo}}\label{sec6}
Let $\delta_0$ be as in Proposition \ref{distvert} and let $\varepsilon_0$ be such that
\begin{equation}\label{ep}
    \widetilde{\omega}(\varepsilon_0)\leq \delta_0
\end{equation}
where $\widetilde{\omega}$ is the logarithmic modulus of continuity given in Theorem \ref{stablog}.

Let us assume first that
\begin{equation*}
    \varepsilon:=\|\Lambda^{\Sigma}_{\gamma_{{D}_0}}-\Lambda^{\Sigma}_{\gamma_{{D}_1}}\|_{\star}\leq \varepsilon_0,
\end{equation*}
so that, by Theorem \ref{stablog}, 
\begin{equation*}
d_H:=d_H(\partial D_0,\partial D_1)\leq \widetilde{\omega}(\varepsilon_0)\leq \delta_0.
\end{equation*}
For $f,g\in H^{\frac{1}{2}}_{co}(\Sigma)$  the map $F(t,f,g)$ is well defined for $t\in[0,1]$.

Notice that, by definition of $F$ and by \eqref{ep}
\begin{equation}\label{8.1}
    \left|F(1,f,g)-F(0,f,g)\right|=\left|\bigg\langle\left(\Lambda^{\Sigma}_{\gamma_{D_0}}-\Lambda^{\Sigma}_{\gamma_{D_1}}\right)f\lfloor_{\Sigma},g\bigg\rangle\right|\leq \varepsilon \|f\|_{H^{\frac{1}{2}}_{co}(\Sigma)} \|g\|_{H^{\frac{1}{2}}_{co}(\Sigma)}.
\end{equation}
Let us write
\begin{eqnarray*}
    F(1,f,g)-F(0,f,g)&=&\int_0^1F'(t,f,g)dt\\
    &=&F'(0,f,g)-\int_0^1\left[F'(t,f,g)-F'(0,f,g)\right]dt,
\end{eqnarray*}
hence
\begin{equation}\label{8.2}
    \left|F(1,f,g)-F(0,f,g)\right|\geq \left|F'(0,f,g)\right|-\int_0^1\left|F'(t,f,g)-F'(0,f,g)\right|dt,
\end{equation}
By Proposition \ref{prop7.4},
\begin{equation}\label{8.3}
    \int_0^1\left|F'(t,f,g)-F'(0,f,g)\right|dt\leq \frac{Cd_H^{1+\beta_3}}{1+\beta_3}\|f\|_{H^{\frac{1}{2}}_{co}(\Sigma)} \|g\|_{H^{\frac{1}{2}}_{co}(\Sigma)} \quad \forall f,g\in H^{\frac{1}{2}}_{co}(\Sigma)
\end{equation}

and by Proposition \ref{prop:lower_bound_F'}, there exist $f_0,g_0\in H^{\frac{1}{2}}_{co}(\Sigma)$ such that 
\begin{equation}\label{8.4}
    \left|F'(0,f_0,g_0)\right|\geq \frac{m_0d_H}{2}\|f_0\|_{H^{\frac{1}{2}}_{co}(\Sigma)} \|g_0\|_{H^{\frac{1}{2}}_{co}(\Sigma)}.
\end{equation}
Hence by \eqref{8.1}, \eqref{8.2}, \eqref{8.3} (for $f=f_0$ and $g=g_0$) and by \eqref{8.4} we have that
\begin{equation}\label{8.5}
    \varepsilon\geq \left(\frac{m_0}{2}-\frac{Cd_H^{\beta_3}}{1+\beta_3}\right)d_H.
\end{equation}
Now, by Theorem \ref{stablog} there is $\varepsilon_1\leq\varepsilon_0$ depending only on the a priori data, such that, if 
\begin{equation*}
   \varepsilon:=\|\Lambda^{\Sigma}_{\gamma_{{D}_0}}-\Lambda^{\Sigma}_{\gamma_{{D}_1}}\|_{\star}\leq \varepsilon_1, 
\end{equation*}
then
\begin{equation*}
    \frac{m_0}{2}-\frac{Cd_H^{\beta_3}}{1+\beta_3}\geq \frac{m_0}{4}
\end{equation*}
and, by \eqref{8.5},
\begin{equation}\label{conc1}\varepsilon\geq \frac{m_0}{4}d_H\end{equation}

Let us now consider the case
\begin{equation}
    \|\Lambda^{\Sigma}_{\gamma_{{D}_0}}-\Lambda^{\Sigma}_{\gamma_{{D}_1}}\|_{\star} \geq\varepsilon_1,
\end{equation}
(that includes the case $\|\Lambda^{\Sigma}_{\gamma_{{D}_0}}-\Lambda^{\Sigma}_{\gamma_{{D}_1}}\|_{\star} >\varepsilon_0$).

We have
\begin{equation}\label{conc2}
    d_H\leq 2diam(\Omega)\leq 2R_0\leq \frac{2R_0}{\varepsilon_1}\|\Lambda^{\Sigma}_{\gamma_{{D}_0}}-\Lambda^{\Sigma}_{\gamma_{{D}_1}}\|_{\star}.
\end{equation}
By \eqref{conc1} and \eqref{conc2} estimate \eqref{eq:mainteo} holds for
\begin{equation*}
    C=\max\left\{\frac{m_0}{4},\frac{2R_0}{\varepsilon_1}\right\}
\end{equation*}

\section*{Funding}
E.F. and S.V. were partly funded by Research Project 201758MTR2 of the Italian Ministry of Education, University and
Research (MIUR) Prin 2017 “Direct and inverse problems for partial differential
equations: theoretical aspects and applications”.

\appendix
\section{Upper and lower bounds for $S(y,z)$.}\label{appendix:stability}
In this section, we provide the proofs of Proposition \ref{th: stability_above} and Proposition \ref{th: stability_below}.  
\begin{proof}[Proof of Proposition \ref{th: stability_above}]
We divide the proof of the proposition into four steps.\\
\textit{Step 1: for all $y,z\in B_{r_1}(P_0)$, with $P_0\in\om_0$, it holds
\begin{equation}\label{eq:rough_estimate_S}
    \big|S(y,z)\big|\leq C \varepsilon,
\end{equation}
where $C$ is a constant depending on the a priori data.
}
\begin{proof}[Proof of Step 1]
By the Alessandrini identity \eqref{eq:Alessandrini_identity} specialized to the case $u_0(\cdot)=\Gs_0(\cdot,y)$ and $u_1(\cdot)=\Gs_1(\cdot,z)$, we find
\begin{equation*}
\begin{aligned}
    \big|S(y,z)\big|&=\bigg|\big\langle (\Lambda_{\gamma_{D_0}}^{\Sigma}-\Lambda_{\gamma_{D_1}}^{\Sigma})\Gs_0(\cdot,y)\big\lfloor_{\Sigma}, \Gs_1(\cdot,z)\big\lfloor_{\Sigma} \big\rangle\bigg|\\
    &\leq \big\| \Lambda^{\Sigma}_{\gamma_{D_0}}-\Lambda^{\Sigma}_{\gamma_{D_1}}\big\|_{\star} \|\Gs_0(\cdot,y)\|_{H^1(\om)} \|\Gs_1(\cdot,z)\|_{H^1(\om)} \leq C\varepsilon
\end{aligned}    
\end{equation*}
thanks to \eqref{eq:bound_estimates_green_func} and \eqref{eq:smallness_diff_DtN_map}, where $C$ depends only on the a priori data.
\end{proof}
In the next step, we get an estimate of $S(y,z)$, when the point $y$ belongs to $\om_0$ while $z$ is in $\mathcal{G}$ but is far from edges and vertices of $\Omega_{\mathcal{G}}$ where $H^1$-estimates of the Green function do not hold.  \\
\textit{Step 2: let $y\in B_{r_1}(P_0)$, where $P_0\in\om_0$. For all $C_1>1$ and $z\in \oms\setminus \left(\Omega_{\mathcal{G}} \cup \bigcup\limits_{P_1\in \sigma^{\Omega_{\mathcal{G}}}_{ij}} B_{\frac{r_0}{C_1}}(P_1)  \right)$, with $i\neq j$, there exists a constant $C$ depending on the a priori data and $C_1$
such that
\begin{equation}\label{eq:rough_estimate_S2}
    \big|S(y,z)\big|\leq Cd_z^{-\frac{1}{2}}.
\end{equation}
where $d_z=dist(z,\partial\Omega_{\mathcal{G}})$.
}
\begin{proof}[Proof of Step 2]
Let us consider \eqref{eq:func_S}. Then
\begin{equation*}
    \big|S(y,z)\big|\leq |k-1|\Bigg\{\int_{D_0} |\nabla \Gs_0(x,y)\cdot \nabla \Gs_1(x,z)|\, dx + \int_{D_1} |\nabla \Gs_0(x,y)\cdot \nabla \Gs_1(x,z)|\, dx  \Bigg\}
\end{equation*}
hence, for $i=0,1$, we have
\begin{equation*}
\begin{aligned}
    \int_{D_i} |\nabla \Gs_0(x,y)\cdot \nabla \Gs_1(x,z)|\, dx &\leq C \|\nabla \Gs_0(\cdot,y)\|_{L^2(D_i)} \|\nabla \Gs_1(\cdot,z)\|_{L^2(D_i)} \\
    &\leq C \| \Gs_0(\cdot,y)\|_{H^1(\om^{\sharp}\setminus B_{r_1}(y))} \| \Gs_1(\cdot,z)\|_{H^1(\om^{\sharp}\setminus B_{d_z}(z))}\\
    &\leq C d_z^{-\frac{1}{2}},
\end{aligned}
\end{equation*}
where $C$ is a constant depending only on the a priori data.
\end{proof}
\textit{Step 3:  for all $y\in B_{r_1}(P_0)$, with $P_0\in \Omega_0$, it holds 
\begin{equation}\label{eq:estimate_step3}
    |S(y,\xi_h)|\leq C \frac{\varepsilon^{\eta}}{h^{\frac{1}{2}}},
\end{equation}
where 
\begin{equation*}
    \eta=\beta_2 \tau^{\Big(\frac{\big|\log \frac{h}{d_1}\big|}{|\log \chi|}+1\Big)},
\end{equation*}
and $0<\beta_2<1$ depending on the a priori data. 
}
\begin{remark}
Before proving Step 3, we note that as a consequence of Proposition \ref{lemmageometrico} is always possible to construct a path $\mathfrak{c}$ joining a point $x\in B_{r_1}(P_0)$ to a point in $\mathcal{G}$ and a tubular neighborhood of $\mathfrak{c}$, where its radius now depends also on $r_1,M_1$.
\end{remark}
\begin{remark}
In the proof of the proposition, we make an extensive use of the three spheres inequality for harmonic functions. We refer the reader to \cite{KorMey94,Kuk98,ADC} for more details. For the sake of simplicity, we recall here the statement which is adapted to our case: for every solution $w\in H^1(B_{\varrho_0}(x))$, where $B_{\varrho_0}(x)\subset \mathcal{G}$ of the equation
\begin{equation*}
    \Delta w=0\qquad \textrm{in}\,\, B_{\varrho_0}(x)
\end{equation*}
and for all $0<\varrho_1<\varrho_2<\varrho_3\leq\varrho_0$, it holds
\begin{equation}\label{eq:three-spheres}
    \|w\|_{L^{\infty}(B_{\varrho_2}(x))}\leq \|w\|^{\tau}_{L^{\infty}(B_{\varrho_1}(x))} \|w\|^{1-\tau}_{L^{\infty}(B_{\varrho_3}(x))},
\end{equation}
where $0<\tau<1$ depends on $\frac{\varrho_2}{\varrho_3}$, $\frac{\varrho_1}{\varrho_3}$.
\end{remark}
\begin{proof}[Proof of Step 3]
Thanks to Proposition \ref{lemmageometrico}, 
we use \eqref{eq:rough_estimate_S} and the three spheres inequality \eqref{eq:three-spheres} to propagate the smallness of $S(y,z)$ inside $\oms\setminus \Omega_{\mathcal{G}}$ till reaching the point $Q$. In our notation, we choose $w=S(y,\cdot)$ and $\varrho_0=R$ in \eqref{eq:three-spheres}. Then, applying once the three spheres inequality, we get
\begin{equation*}
   \|S(y,\cdot)\|_{L^{\infty}(B_{\varrho_2}(z))}\leq \|S(y,\cdot)\|^{\tau}_{L^{\infty}(B_{\varrho_1}(z))} \|S(y,\cdot)\|^{1-\tau}_{L^{\infty}(B_{\varrho_3}(z))}.
\end{equation*}
The first and second terms on the right-hand side of the previous inequality are estimated by \eqref{eq:rough_estimate_S} and \eqref{eq:rough_estimate_S2}, respectively, noticing that the worst case in \eqref{eq:rough_estimate_S2} is given by $d_z=h$, where $h$ appears in the definition of $\xi_h$. Therefore, we find
\begin{equation*}
    \|S(y,\cdot)\|_{L^{\infty}(B_{\varrho_2}(z))}\leq C \varepsilon^{\tau}\frac{1}{h^{\frac{1}{2}(1-\tau)}}
\end{equation*}
where the last inequality comes from the fact that $0<\tau<1$.
Then, we apply the three spheres inequality along a chain of balls to reach the point $Q$, that is, we get
\begin{equation*}
    \|S(y,\cdot)\|_{L^{\infty}(B_{\varrho_2}(Q))}\leq 
    C \varepsilon^{\tau^{\widetilde{\beta}_2}}\frac{1}{h^{\frac{1}{2}(1-\tau^{\widetilde{\beta}_2})}}
    \leq C \varepsilon^{\tau^{\widetilde{\beta}_2}}\frac{1}{h^{\frac{1}{2}}},
\end{equation*}
where $\widetilde{\beta_2}$ is the number of iterations of the three spheres inequality and $C$ depends on the a priori data. In order to propagate the smallness from $Q$ to $\xi_h$, we use the same procedure proposed in \cite{ADC,ADCMR}, iterating an application of the three spheres inequality \eqref{eq:three-spheres} over a chain of balls of decreasing radius and contained in a suitable cone of vertex $P$ and axis $\nu=e_3$. Finally reasoning as in \cite{ADC}, 
we find 
\begin{equation*}
    \|S(y,\cdot)\|_{L^{\infty}(B_{\rho_{k(h)}}(\xi_h))}\leq C \frac{\varepsilon^{\beta_2 \tau^{\frac{\big| \log\frac{h}{d_1}\big|}{|\log\chi|}+1}} }{h^{\frac{1}{2}} },
\end{equation*}
where $C$ depends on the a priori constant and $0<\beta_2<1$.
Hence, \eqref{eq:estimate_step3} follows.
\end{proof}
\textit{Step 4: final step.} For all $C_1>1$ and $y,z\in \oms\setminus \left(\Omega_{\mathcal{G}} \cup \bigcup\limits_{P_1\in \sigma^{\Omega_{\mathcal{G}}}_{ij}} B_{\frac{r_0}{C_1}}(P_1)  \right)$, with $i\neq j$, one can repeat the same argument as in Step 2 to get
\begin{equation*}
    |S(y,z)|\leq C (d_yd_z)^{-\frac{1}{2}},
\end{equation*}
where $C$ depends on the a priori data and on $C_1$.
In particular, choosing $y=z=\xi_h$, we find the estimate 
\begin{equation}\label{eq:estimate_S_xih}
    |S(\xi_h,\xi_h)|\leq \frac{C}{h}.
\end{equation}
Similarly as in Step 3, we can apply Proposition \ref{lemmageometrico} and an iteration of chain of balls joining a point $y\in B_{r_1}(P_0)$, where $P_0\in\oms$, to $Q$. In the application of the three spheres inequality, estimates \eqref{eq:estimate_step3} and \eqref{eq:estimate_S_xih} are now used. It holds
\begin{equation*}
    \|S(\cdot,\xi_h)\|_{L^{\infty}(B_{\varrho_2}(Q)}\leq C \frac{\varepsilon^{\widetilde{\beta}_1 \tau^{\frac{\big| \log\frac{h}{d_1}\big|}{|\log\chi|}+1}} }{h}
\end{equation*}
where $0<\widetilde{\beta}_1<1$. Finally, we apply again the three spheres inequality along a chain of balls of decreasing radius using the same construction of Step 3. Therefore, we get
\begin{equation}\label{eq:estimate_S_final}
     \|S(\cdot,\xi_h)\|_{L^{\infty}(B_{\rho_{k(h)}}(\xi_h))}\leq C \frac{\varepsilon^{\beta_1 \tau^{\frac{2\big| \log\frac{h}{d_1}\big|}{|\log\chi|}+2}} }{h}.
\end{equation}
Defining $A=1/d_1$ and $B=\frac{2}{|\log\chi|}$, we get by \eqref{eq:estimate_S_final} the estimate 
\begin{equation*}
     |S(\xi_h,\xi_h)|\leq C \frac{\varepsilon^{\beta_1 \tau^{2+B|\log A|}h^{B|\log \tau|} }}{h}.
\end{equation*}
The assertion of the theorem follows defining $C_3$ and $C_4$ as
\begin{equation*}
C_3=\beta_1 \tau^{2+2\frac{|\log A|}{|\log \chi|}},\qquad \textrm{and}\qquad C_4=2\frac{|\log \tau|}{|\log \chi|}
\end{equation*}.
\end{proof}
Next, we provide the proof of Proposition \ref{th: stability_below}.
\begin{proof}[Proof of Proposition \ref{th: stability_below}]
Let $\varrho$ be as in \eqref{eq:condition_on_varrho} and consider $\xi_h$ and $0<h<\overline{h}\varrho$, with $\overline{h}\in (0,\frac{1}{2})$ to be chosen later. By equation \eqref{eq:func_S}, we get
\begin{equation}\label{eq:I1-I2}
\begin{aligned}
    \frac{|S(\xi_h,\xi_h)|}{|k-1|}\geq\Bigg[&\bigg|\int_{D_0}\nabla \Gs_0(x,\xi_h)\cdot \nabla \Gs_1(x,\xi_h)\, dx\bigg|\\
    &-\bigg| \int_{D_1}\nabla \Gs_0(x,\xi_h)\cdot \nabla \Gs_1(x,\xi_h)\, dx\bigg|\Bigg]=:|I_1|-|I_2|.
\end{aligned}
\end{equation}
To estimate $I_2$, note that, since $\xi_h\notin\partial\Omega_{\mathcal{G}}$, we can add and subtract the gradient of the biphase fundamental solution in $I_2$, that is
\begin{equation}\label{eq:I2}
\begin{aligned}
    |I_2|=\bigg|&\int_{D_1}\left(\nabla\Gs_0(x,\xi_h)-\nabla \widehat{\Gamma}_0(x,\xi_h)\right)\cdot \left(\nabla\Gs_1(x,\xi_h)-\nabla \widehat{\Gamma}_1(x,\xi_h)\right)\, dx\\
    &+ \int_{D_1}\left(\nabla\Gs_0(x,\xi_h)-\nabla \widehat{\Gamma}_0(x,\xi_h)\right)\cdot \nabla \widehat{\Gamma}_1(x,\xi_h)\, dx\\
    &+ \int_{D_1}\nabla \widehat{\Gamma}_0(x,\xi_h)\cdot \left(\nabla\Gs_1(x,\xi_h)-\nabla \widehat{\Gamma}_1(x,\xi_h)\right)\, dx\\
    &+ \int_{D_1}\nabla \widehat{\Gamma}_0(x,\xi_h)\cdot \nabla \widehat{\Gamma}_1(x,\xi_h) \, dx\bigg|=: |I_{21}+I_{22}+I_{23}+I_{24}|.
\end{aligned}
\end{equation}
Integral $I_{21}$ can be estimated by \eqref{eq:estimate_green_fundamental_solution}, hence
\begin{equation}\label{eq:estimateI21}
    |I_{21}|\leq \int_{\oms}|\nabla\Gs_0(x,\xi_h)-\nabla \widehat{\Gamma}_0(x,\xi_h)| |\nabla\Gs_1(x,\xi_h)-\nabla \widehat{\Gamma}_1(x,\xi_h)|\, dx\leq C.
\end{equation}
Integral $I_{22}$ and $I_{23}$ can be treated analogously. For example, by Cauchy-Schwarz inequality and \eqref{eq:estimate_green_fundamental_solution}, we find that
\begin{equation}\label{eq:I22}
\begin{aligned}
|I_{22}|&\leq \int_{D_1}|\nabla\Gs_0(x,\xi_h)-\nabla \widehat{\Gamma}_0(x,\xi_h)| |\nabla \widehat{\Gamma}_1(x,\xi_h)|\, dx\\
&\leq C \|\nabla\Gs_0-\nabla \widehat{\Gamma}_0\|_{L^2(D_1)} \|\nabla\widehat{\Gamma}_1\|_{L^2(D_1)}\leq C \|\nabla\widehat{\Gamma}_1\|_{L^2(\oms\setminus B_h(\xi_h))}.   
\end{aligned}
\end{equation}
To estimate the last term in the previous inequality, we use the result in \cite[Proposition 3.4]{ADC}, that is, using the explicit behaviour of the biphase fundamental solution, that is
\begin{equation*}
    |\nabla\widehat{\Gamma}_i(x,y)|\leq \frac{C}{|x-y|^2},\qquad\forall x,y\in\mathbb{R}^3,\ x\neq y,\ i=0,1,
\end{equation*}
and spherical coordinates, it is straightforward to prove that 
\begin{equation*}
  \|\nabla\widehat{\Gamma}_1\|_{L^2(\oms\setminus B_h(\xi_h))} \leq \frac{C}{h^{\frac{1}{2}}},
\end{equation*}
hence, the use of this result in \eqref{eq:I22} gives
\begin{equation}\label{eq:estimateI22}
    |I_{22}|\leq \frac{C}{h^{\frac{1}{2}}},\qquad \textrm{and similarly}\qquad  |I_{23}|\leq \frac{C}{h^{\frac{1}{2}}}.
\end{equation}
Integral $I_{24}$ is estimated by using again the result in \cite{ADC} and the fact that $B_h(\xi_h)\subset B_{\varrho}(P)$, since $h\leq \frac{\varrho}{2}$, and moreover $D_1\cap B_{\varrho}(P)=\emptyset$. Then,
\begin{equation}\label{eq:I24}
\begin{aligned}
    |I_{24}|&\leq \int_{D_1}|\nabla \widehat{\Gamma}_0(x,\xi_h)| |\nabla \widehat{\Gamma}_1(x,\xi_h)| \, dx \leq C \int_{D_1} \frac{C}{|x-\xi_h|^4}\, dx\\
    &\leq C \int_{\mathbb{R}^3\setminus B_{\varrho}(P)} \frac{C}{|x-\xi_h|^4}\, dx.
\end{aligned}
\end{equation}
Note that $|x-\xi_h|\geq |x|-h$, hence, from the application of spherical coordinates to the last term of \eqref{eq:I24}, we find
\begin{equation*}
    |I_{24}|\leq C\int_{\varrho}^{+\infty}\frac{r^2}{(r-h)^4}\,dr,
\end{equation*}
and since $h\leq \frac{\varrho}{2}\leq \frac{r}{2}$, we have that $r-h\geq\frac{r}{2}$, hence
\begin{equation}\label{eq:estimateI24}
    |I_{24}|\leq C\int_{\varrho}^{+\infty}\frac{1}{r^2}\, dr=\frac{C}{\varrho},
\end{equation}
where $C$ depends only on the a priori data.
Finally, by estimates \eqref{eq:estimateI21}, \eqref{eq:estimateI22} and \eqref{eq:estimateI24} in \eqref{eq:I2}, we find
\begin{equation}\label{eq:estimateI2}
    |I_2|\leq C_1+\frac{C_2}{h^{\frac{1}{2}}}+\frac{C_3}{\varrho},
\end{equation}
where $C_1, C_2, C_3$ depend on the a priori data.\\
For the first integral in \eqref{eq:I1-I2}, we use the following decomposition of the domain $D_0=(D_0\cap B_{\varrho}(P))\cup (D_0\setminus B_{\varrho}(P))$, that is
\begin{equation}\label{eq:I1}
\begin{aligned}
    |I_1|\geq& \bigg| \int_{D_0\cap B_{\varrho}(P)} \nabla \Gs_0(x,\xi_h)\cdot \nabla \Gs_1(x,\xi_h)\, dx\bigg| +\\
    &-  \bigg| \int_{D_0\setminus B_{\varrho}(P)} \nabla \Gs_0(x,\xi_h)\cdot \nabla \Gs_1(x,\xi_h)\, dx\bigg|=: |I_{11}| - |I_{12}|
\end{aligned}
\end{equation}
The term $I_{12}$ can be estimated using the same procedure adopted for $I_2$, hence
\begin{equation}\label{eq:estimateI12}
    |I_{12}|\leq C_1+\frac{C_2}{h^{\frac{1}{2}}}+\frac{C_3}{\varrho}.
\end{equation}
In $I_{11}$ we add and subtract the gradient of the biphase fundamental solution $\widehat{\Gamma}_1$, that is
\begin{equation*}
\begin{aligned}
    |I_{11}|&\geq \bigg| \int_{D_0\cap B_{\varrho}(P)}\nabla \widehat{\Gamma}_0(x,\xi_h)\cdot \nabla \widehat{\Gamma}_1(x,\xi_h)\, dx\bigg| +\\
    &- \bigg| \int_{D_0\cap B_{\varrho}(P)}\left[\nabla \Gs_0(x,\xi_h)-\nabla\widehat{\Gamma}_0(x,\xi_h)\right]\cdot \nabla \Gs_1(x,\xi_h)\, dx\bigg|=:|I_{111}|-|I_{112}|.
\end{aligned}
\end{equation*}
For the estimation of $I_{112}$ we use \eqref{eq:H1_estim_green_func} and \eqref{eq:unif_est_green_fund}, that is
\begin{equation}\label{eq:estimateI112}
    |I_{112}| \leq C \int_{\oms\setminus B_h(\xi_h)}|\nabla\Gs_1(x,\xi_h)|\,dx \leq \frac{C}{h^{\frac{1}{2}}}.
\end{equation}
In the term $I_{111}$, we add and subtract the gradient of the biphase fundamental solution $\widehat{\Gamma}_1$, that is
\begin{equation*}
\begin{aligned}
    |I_{111}|\geq& \bigg| \int_{D_0\cap B_{\varrho}(P)}\nabla\widehat{\Gamma}_0(x,\xi_h)\cdot \nabla\widehat{\Gamma}_1(x,\xi_h)\, dx\bigg| +\\
    &-\bigg| \int_{D_0\cap B_{\varrho}(P)} \nabla\widehat{\Gamma}_0(x,\xi_h)\cdot\left[\nabla\Gs_1(x,\xi_h)-\nabla\widehat{\Gamma}_1(x,\xi_h)\right]\,dx\bigg|\\
    &=:|I_{1111}|-|I_{1112}|.
\end{aligned}
\end{equation*}
For the term $I_{1112}$, we use similar arguments adopted in the previous calculations and \eqref{eq:estimate_green_fundamental_solution}, hence
\begin{equation}\label{eq:estimateI1112}
    |I_{1112}| \leq \frac{C}{h^{\frac{1}{2}}}.
\end{equation}
Finally, from the results in \cite{AleVes05,BerFra11}, we have that
\begin{equation}\label{eq:estimateI1111}
    |I_{1111}|\geq \frac{C}{h}.
\end{equation}
From \eqref{eq:I1}, by estimates \eqref{eq:estimateI1111}, \eqref{eq:estimateI1112}, \eqref{eq:estimateI112} and \eqref{eq:estimateI12}, we find
\begin{equation}\label{eq:estimateI1}
    |I_1|\geq \frac{C}{h}-C_1-\frac{C_2}{h^{\frac{1}{2}}}-\frac{C_3}{\varrho}.
\end{equation}
Finally, using \eqref{eq:estimateI1} and \eqref{eq:estimateI2} into \eqref{eq:I1-I2}, we get
\begin{equation*}
    |S(\xi_h,\xi_h)|\geq \frac{C}{h}\left(1-C_1 h^{\frac{1}{2}}-\frac{C_2h}{\varrho}-C_3h \right),
\end{equation*}
where the constants $C, C_1, C_2, C_3$ depend on the a priori data. 
Therefore, there exists $\overline{h}>0$ such that, for any $0<h<\overline{h}\varrho$, the estimate \eqref{eq:estimate_from_below} follows.
\end{proof}

\bibliography{references}{}
\bibliographystyle{plain}
\end{document}